\documentclass[11pt]{amsart}
\usepackage[utf8]{inputenc}
\usepackage{fullpage,url,amssymb,enumitem,colonequals, mathrsfs,comment}
\usepackage{microtype}
\usepackage{hyperref}
\usepackage{xcolor}
\usepackage{tikz}
    \usetikzlibrary{calc}
\usepackage{tikzit}
\usepackage[pdftex]{}
\usepackage{mathabx}
\usepackage{cleveref}
\usepackage{array}
\usepackage{tabularx}
\usepackage{mathtools}

\newcommand{\Aff}{\mathbb{A}}
\newcommand{\C}{\mathbb{C}}

\newcommand{\F}{\mathbb{F}}
\newcommand{\G}{\mathbb{G}}

\newcommand{\Q}{\mathbb{Q}}
\newcommand{\R}{\mathbb{R}}
\newcommand{\Z}{\mathbb{Z}}
\newcommand{\Qbar}{{\overline{\Q}}}

\newcommand{\calD}{\mathcal{D}}

\newcommand{\calI}{\mathcal{I}}

\newcommand{\calP}{\mathcal{P}}

\newcommand{\calS}{\mathcal{S}}

\newcommand{\BB}{{\mathscr{B}}}
\newcommand{\EE}{{\mathscr{E}}}
\renewcommand{\AA}{{\mathscr{A}}}

\newcommand{\KK}{\mathscr{K}}
\newcommand{\MM}{\mathscr{M}}

\newcommand{\UU}{\mathscr{U}}

\newcommand{\bfj}{\boldsymbol{j}}

\newcommand{\bfm}{\boldsymbol{m}}

\newcommand{\bfC}{\boldsymbol{C}}
\newcommand{\bfD}{\boldsymbol{D}}

\newcommand{\bfG}{\boldsymbol{G}}

\newcommand{\bfM}{\boldsymbol{M}}

\DeclareMathOperator{\Gal}{Gal}

\DeclareMathOperator{\Prob}{Prob}

\newcommand{\tors}{{\operatorname{tors}}}

\newcommand{\Leg}{\operatorname{Leg}}

\newcommand{\isom}{\simeq}

\newcommand{\tensor}{\otimes} 
\newcommand{\Union}{\bigcup} 

\newcommand{\MMgeneric}{%
  \hyperlink{special*}{%
    \tikz[baseline=-0.55ex,
          x=0.45ex,y=0.45ex,
          line cap=round,line join=round]{
      \coordinate (A) at (0.7,3.0);
      \coordinate (B) at (0,0);
      \coordinate (C) at (3.4,-0.2);
      \coordinate (AB) at ($(A)!0.5!(B)$);
      \coordinate (AC) at ($(A)!0.5!(C)$);
      \coordinate (BC) at ($(B)!0.5!(C)$);
      \draw[line width=.12ex] (A)--(B)--(C)--cycle;
      \draw[line width=.08ex]
        (A)--(BC)
        (B)--(AC)
        (C)--(AB);
    }%
  }%
}

\newtheorem{theorem}{Theorem}[section]
\newtheorem*{theorem*}{Theorem}
\newtheorem{lemma}[theorem]{Lemma}
\newtheorem{corollary}[theorem]{Corollary}
\newtheorem{proposition}[theorem]{Proposition}

\theoremstyle{definition}
\newtheorem{definition}[theorem]{Definition}
\newtheorem{question}[theorem]{Question}

\theoremstyle{remark}
\newtheorem{remark}[theorem]{Remark}

\usepackage{microtype}
\title{Generic Manin--Mumford}

\author{Lior Bary-Soroker}
\address{Lior Bary-Soroker, School of Mathematical Sciences, Tel Aviv University, Tel Aviv 6997801, Israel}
\email{barylior@tauex.tau.ac.il}
\urladdr{\url{https://www.math.tau.ac.il/~barylior/}}

\author{Borys Kadets}
\address{Borys Kadets, Einstein Institute of Mathematics\\
Hebrew University of Jerusalem}
\email{kadets.math@gmail.com}
\urladdr{\url{http://bkadets.github.io}}

\date{\today}

\begin{document}
\begin{abstract}
    Given a collection of algebraic numbers $\calS\subset \Qbar$ we study the varieties $V$ in $\Aff^m_\C$ 
    such that $V(\C)\cap\calS^m$ is Zariski-dense in $V$. We show that for many classical families of algebraic numbers $\calS$---such as the family of roots of generalized Laguerre polynomials $L_n^{(\alpha)}(x)$, for a finite collection of $\alpha\in \Q$---an unlikely intersections theorem holds. For example, in the case $m=2$, we prove that an irreducible curve in $\Aff^2_\C$ has infinitely many points from $\calS^2$ if and only if it is of the form $x_1=x_2,$ or $x_1=s$, or $x_2=s$ for a fixed $s \in \calS$.  This is an analogue of the classical theorems of Ihara, Serre, and Tate, treating  the case of $\calS$ consisting of the roots of unity, and of the Manin--Mumford conjecture.
    We also show that a similar result holds almost surely for roots of a collection of random polynomials of growing degree and bounded height. The proofs rely on a uniform Galois-theoretic criterion ensuring the unlikely intersection property. 
\end{abstract}
\maketitle

\section{Introduction}
The unlikely intersection principle asserts that an algebraic variety
containing many \emph{special} points must itself be \emph{special}.
An early example is the following theorem of Ihara, Serre, and Tate (recorded by Lang) \cite{Lang1965}:
\begin{quote}
    \it
    Let $F(X,Y)\in\C[X,Y]$ be irreducible. If the equation
    $F(X,Y)=0$ has infinitely many solutions in roots of unity, then, up to multiplication by a nonzero constant, 
    \[
        F= X^aY^b-\zeta
        \qquad\text{or}\qquad
        F=X^a- \zeta Y^b,
    \]
    $\zeta\in\C$ is a root of unity 
    $a,b\ge 0$ are integers.
\end{quote}
Here the special points are the torsion points of $\G_m^2$, and the
special curves are translates of one-dimensional subtori by torsion
points.

The Manin--Mumford conjecture, proved by Raynaud \cite{Raynaud1983}, predicts that a subvariety of an abelian
variety containing a Zariski dense set of torsion points must be a
finite union of torsion translates of abelian subvarieties.  Laurent \cite{Laurent1984}
established the analogous statement for algebraic tori, while the Andr\'e--Oort conjecture, resolved in \cite{Pila2011,Tsimerman2018,KlinglerUllmoYafaev2022} using the so-called Pila--Zanier strategy \cite{pila2008rational}, provides a further analogue for special points
on Shimura varieties; see the survey paper \cite{Tsimerman2024}.

Motivated by these results, we consider analogous questions for other
collections of algebraic numbers. Given an infinite set
\(\calS\subseteq \overline{\Q}\), whose elements we regard as
\emph{special points}, we ask which subvarieties of \(\Aff^n\) can
contain a Zariski dense subset of \(\calS^n\).

Without additional assumptions on \(\calS\), the answer can be arbitrarily complicated. For example, if \(\calS=\overline{\Q}\), then
every subvariety of \(\Aff^n\) contains a Zariski dense subset of \(\calS^n\). 

\begin{definition} \noindent
\begin{enumerate}
    \item A variety in $\Aff^n_\C$ is called \emph{$\calS$-special} if it is a linear subspace cut out by a finite, possibly empty, collection of equations of the form $x_i=x_j$, $i\neq j$ or $x_i=s$, $s\in\calS$. Similarly, we say that a variety is \emph{$\calS^{\pm}$-special} if in addition one allows equations $x_i=-x_j$, and \emph{$\calS^{\tors}$-special} if one allows $x_1^{i_1}x_2^{i_2}\dots x_n^{i_n}=\zeta$, $i_j \in \Z$, where $\zeta$ is a root of unity.
    \item 
    We say that $\calS\subseteq \overline{\Q}$ is \emph{special} (respectively, \textit{signed-special}, and \textit{torsion-special}) if the Zariski closure of any $B\subseteq \calS^n$ in $\Aff^n_{\C}$ is a finite union of $\calS$-special subvarieties (respectively, $\calS^{\pm}, \calS^{\tors}$-special). 
\end{enumerate}
\end{definition}

The following theorem, communicated to us by Binyamini, Kiro, and Pila \cite{BinyaminiKiroPila}
shows that the roots of Laguerre polynomials satisfy a Manin--Mumford
type theorem.
Recall the $n$-th Laguerre polynomial is 
\[
    P_n (x) = \sum_{k=0}^n\binom{n}{k}\frac{(-1)^k}{k!}x^k.
\]
and it is a solution to the Laguerre differential equation 
\[
    xy''+(1-x)y'+ny=0.
\]
\begin{theorem*}[Binyamini, Kiro, and Pila]
    The set 
    \[
        \calS = \{\alpha\in \C: P_n(\alpha)=0 \text{ for some }n\geq 1\}
    \]
    of roots of Laguerre polynomials 
    is special.
\end{theorem*}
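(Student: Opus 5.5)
The plan is a Galois-theoretic argument. It combines Schur's theorem on Laguerre polynomials with an induction on dimension.

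\textbf{Arithmetic input.} By Schur, each $P_n$ is irreducible over $\Q$ with Galois group $S_n$. Moreover, for distinct $n_1,\dots,n_k$ the splitting fields $K_{n_1},\dots,K_{n_k}$ should be linearly disjoint over $\Q$, with only finitely many exceptions. The reason is that the discriminants of the $P_n$ are explicit products of factorials and powers of $j$. The only possible common subfield is then the quadratic field $\Q(\sqrt{\operatorname{disc} P_n})$, and a product of $S_n$'s is cut down only by sign conditions. Thus for a tuple $(s_1,\dots,s_m)\in\calS^m$ with $s_i$ a root of $P_{n_i}$, the Galois orbit over $\Q$ is essentially all of $\prod_i\{\text{roots of }P_{n_i}\}$, subject to the constraint that equal $n_i$ give a fixed pattern of equalities. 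Equal $n_i$ with $s_i\neq s_j$ give an orbit containing all injective choices, up to an index-$2$ alternating-group issue that only matters for $n\le 2$.

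\textbf{Reduction.} Let $V$ be the Zariski closure of $B\subseteq\calS^m$, and let $W$ be a component. First I descend to a number field: $W$ is defined over some finitely generated field, and specialization together with density of points in $\Qbar^m$ lets me assume $W$ is defined over a number field $L$. Then I replace $\Q$ by $L$ in the Galois statement; only finitely many $K_n$ meet $L$ nontrivially, and those $n$ can be discarded. Next I partition the points of $B\cap W$ by their equality pattern: which coordinates share the same $n_i$, which coincide, and which have bounded $n_i$. Bounded $n_i$ yield finitely many values $s$, hence conditions $x_i=s$. Coordinates that coincide give $x_i=x_j$. I then restrict to the remaining coordinates, where the degrees $n_i$ tend to infinity.

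\textbf{Key step.} Suppose $W$ has Zariski-dense points with $n_i\to\infty$ in the free coordinates, and suppose $W$ is cut out over $L$ by a polynomial $F$ of degree $d$. Each such point $p$ has its whole $\Gal(\Qbar/L)$-orbit inside $W$. That orbit contains a product grid: each free coordinate independently ranges over the roots of its $P_{n_i}$, with injectivity imposed when two coordinates share the same $n$. A nonzero polynomial of degree $d$ in one variable cannot vanish on more than $d$ points. So if $n_i>d+m$, iterating coordinate by coordinate, with a Combinatorial Nullstellensatz-type argument on grids minus diagonals, forces $F$ to vanish identically modulo the diagonal relations $x_i=x_j$. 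Hence $W$ contains the corresponding special subspace. Since $W$ is the closure of such points, and those points lie in the special subspace defined by their pattern, $W$ equals that special subspace. Finitely many patterns then give finitely many components.

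\textbf{Main obstacle.} I expect the hardest step to be the uniform linear disjointness of the $K_n$ over $L$, that is, large Galois image for the product of the $P_{n_i}$. This needs an arithmetic argument along Schur's lines: primes $p$ in $(n/2,n]$ produce $p$-cycles in the Galois group, and for $n_i<n_j$, a prime in $(n_i,n_j]$ ramifies wildly in $K_{n_j}$ but not in $K_{n_i}$, which separates the fields. I would try to prove this via Goursat's lemma, reducing the question to common quotients that are at most $\Z/2$, and then to sign twists that are harmless for the grid argument.
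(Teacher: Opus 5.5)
Your argument is correct. Its arithmetic half is the same as the paper's: the paper also starts from Schur's theorem and, inside the proof of Lemma~\ref{Galois-disjoint}, uses simplicity of $A_n$ together with Goursat/Ribet to show that over the compositum $E$ of the quadratic subfields the group of $K_{n_1}\cdots K_{n_k}$ is $\prod_j A_{n_j}$. For the single family $P_n$ this is pure group theory, since $A_n\not\cong A_{n'}$ for $n\neq n'$, so the ramification argument you flag as the main obstacle is unnecessary.

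The two routes diverge in the geometric step. The paper extracts only a one-coordinate consequence: the Galois orbit of a point meets the line through it parallel to the first axis in at least $\deg\alpha_1-m$ points, with $m$ the ambient dimension in your notation. It then runs an induction on the ambient dimension. It reduces to the case where $\bar B$ is a hypersurface, every coordinate projection is dominant, and all coordinates are distinct and of large degree. It then shows these lines lie in $\bar B$, so $\pi_1$ has positive-dimensional fibres over a dense set, contradicting dominance.

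You instead use the whole product orbit: for each block of coordinates sharing the same $n$, the orbit contains all injective tuples of roots of $P_n$. A grid-vanishing lemma then shows directly that each component $W$ contains the special subspace $\Lambda_c$ of every pattern $c$ met by $B\cap W$. From $W=\overline{B\cap W}\subseteq\bigcup_c\Lambda_c\subseteq W$ and irreducibility you get $W=\Lambda_c$. This removes the induction and the dominance reduction. The price is that you need the full product structure rather than one relative degree, but the paper's proof of Lemma~\ref{Galois-disjoint} establishes that anyway.

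A few statements need tightening, none of them fatal.
\begin{itemize}
\item The $K_n$ are not linearly disjoint up to finitely many exceptions. The quadratic subfield of $K_n$ is $\Q(\sqrt{n!})$, and $\Q(\sqrt{n!})=\Q(\sqrt{(n+1)!})$ whenever $n+1$ is a square.
\item Discarding the $n$ with $K_n\cap L\neq\Q$ does not control the compositum over $L$, since e.g.\ $\Q(\sqrt{n_1!\,n_2!})$ may lie in $L$. What you actually need is that $\Gal(KL/L)$, which has index at most $[L:\Q]$ in $\Gal(K/\Q)$, still contains $\prod_j A_{n_j}$. This holds once every $n_j>\max([L:\Q],4)$, because $A_n$ has no proper subgroup of index less than $n$.
\item Transitivity of $A_n$ on injective $k$-tuples requires $k\le n-2$, not merely $n\ge 3$. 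So the threshold separating bounded from free coordinates should be chosen after $V$ and $L$ are fixed, larger than both $d+m$ and $[L:\Q]$.
\end{itemize}
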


The proof given in \cite{BinyaminiKiroPila} uses techniques of o-minimality, functional transcendence, and differential elimination.
We give a Galois-theoretic proof of this theorem, that also applies to various other classical families of polynomials as well as to random polynomials of bounded height. Our intention is to show that most collections of algebraic numbers of growing degree one encounters in nature are special (and so the term special is a bit of a misnomer).    

\begin{theorem}\label{thm:special-polynomials-intro}
    Suppose $\calS \subset \Qbar$ is the set of roots of any of the following families of polynomials:

    \begin{enumerate}
        \item Truncated exponentials: $e_n(x)=1+x+x^2/2+ \dots + x^n/n!$;
        \item Selmer trinomials: $p_n(x)=x^n-x-1$;
        \item Bessel: $y_n(x)=\sum_{i=0}^n \frac{(n+i)!}{2^i (n-i)! i!}x^i$.
        \item Generalized Laguerre:
        $
        L_n^{(\alpha_j)}(-x)
        =
        \sum_{i=0}^{n}
        \binom{n+\alpha_j}{n-i}
        \frac{x^i}{i!}
        $, $\alpha_1,\ldots, \alpha_r \in \Q\smallsetminus \Z_{<0}$ and $n\geq 1$.
    \end{enumerate}

Then $\calS$ is special. 
   If $\calS$ is the set of roots of any of the following families of polynomials, then $\calS$ is signed-special.

     \begin{enumerate}
         \setcounter{enumi}{4} 
      \item Truncated cosine: $\cos_n(x)=1-x^2/2!+\dots +(-1)^nx^{2n}/(2n)!$
        \item Hermite: $H_n(x)=(-1)^ne^{x^2}\frac{d^n}{dx^n} e^{-x^2/2}$;
        \item Legendre: $\mathrm{Leg}_p(x)=
        \frac{1}{2^p p!} \frac{d^p}{dx^p}(x^2-1)^p$, $p$ prime, \emph{assuming the Hardy--Littlewood conjecture \cite[Conjecture D]{hardy1923some}}.
    \end{enumerate}
\end{theorem}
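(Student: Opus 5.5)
We will deduce the theorem from a uniform Galois-theoretic criterion and then verify that criterion family by family using known Galois group computations. The criterion we aim for is the following. Suppose $\calS=\bigcup_n Z(f_n)$, where each $f_n\in\Q[x]$ has degree $d_n\to\infty$ and has Galois group $G_n$ acting on its roots. Assume that $G_n$ contains $A_{d_n}$ (or, in the signed case, the index-$\le 2$ subgroup of the hyperoctahedral group $C_2\wr S_{d_n/2}$ preserving the pairing $\alpha\leftrightarrow-\alpha$). Assume also that roots of distinct $f_n$ are "independent": for $n\ne n'$ with both degrees large, the splitting fields are linearly disjoint over a bounded extension, or at least the compositum has Galois group containing $A_{d_n}\times A_{d_{n'}}$. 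Under these hypotheses we claim $\calS$ is special (respectively signed-special).

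\textbf{Proof of the criterion.} We induct on $n$ and $\dim V$. Let $V\subset\Aff^n_\C$ be irreducible, with $V\cap\calS^n$ Zariski dense. First we descend: since $\calS\subset\Qbar$, we may replace $V$ by a variety defined over a number field $K$. Otherwise $V$ and its conjugates intersect in a smaller variety still containing the dense set of points, which is a contradiction. Next, $V$ has some fixed degree $D$. Take a point $P=(s_1,\dots,s_n)\in V\cap\calS^n$ in which coordinates from roots of high-degree $f$'s are "generic". Group the coordinates by which polynomial $f_{n_i}$ they come from. Coordinates with bounded degree range over a finite set; these give the equations $x_i=s$ after passing to a dense subfamily. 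For the high-degree blocks, since $G=\Gal$ over $K$ still contains a large alternating group (as $K$ has bounded degree and $A_d$ is simple), the orbit of $P$ under $G$ lies in $V$. Because of the $A_d$-action on distinct roots, this orbit consists of all tuples obtained by moving the distinct entries of a block independently among $\gtrsim d$ roots, with coincidences $s_i=s_j$ (or $s_i=-s_j$) preserved.

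From this we get roughly $d^k$ points in $V$, where $k$ counts the distinct values. These points form a "grid" $Z(f)^{k}$, embedded via the diagonal pattern, minus small diagonals. A polynomial of degree $\le D$ vanishing on a product set $S_1\times\dots\times S_k$ with $|S_i|>D$ vanishes identically; this is the Combinatorial Nullstellensatz / Schwartz–Zippel step. Hence $V$ contains the whole linear space $L_P$ cut out by the coincidence pattern of $P$. By density, and since there are only finitely many patterns, $V$ is a union of such $L$'s, so it equals one of them. That $L$ is $\calS$-special (or $\calS^\pm$-special).

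\textbf{Verification of the hypotheses.} For the individual Galois groups we rely on classical results. Schur's theorem gives $e_n$ with group $S_n$ or $A_n$. Osada's theorem gives $x^n-x-1$ with group $S_n$. Grosswald, Filaseta–Trifonov and others give the Bessel case as $S_n$ for large $n$. Filaseta–Kidwell–Lam–Trifonov and Hajir treat generalized Laguerre polynomials with $\alpha\in\Q$, giving $A_n$ or $S_n$ for all but finitely many $n$. In the signed cases, $\cos_n$, Hermite and Legendre are even or odd polynomials. Writing them as $g(x^2)$ (times $x$), we need $\Gal(g)\supseteq A_{n}$ together with a nontrivial square class. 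This is known (Schur for $\cos_n$ and Hermite). For Legendre, the irreducibility and large Galois group are known only conditionally, whence the Hardy–Littlewood assumption for $p$ prime.

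\textbf{Expected main obstacle.} The hardest step is the independence of different members of a family: the Galois groups of the compositum of $f_n$ and $f_{n'}$ for $n\ne n'$, uniformly in the pair, and for several $\alpha_j$ simultaneously in the Laguerre case. Since $A_d$ is simple and nonabelian, it suffices to show that the splitting fields do not share the nonabelian part. By Goursat's lemma, a shared quotient forces an isomorphism $A_{d_n}\cong A_{d_{n'}}$, which requires $d_n=d_{n'}$. So the difficulty reduces to distinct polynomials of the same degree. This happens only for the several Laguerre parameters $\alpha_j$. There we would show that the two fields are distinguished by ramification, using Newton polygons at suitable primes $p\in(n/2,n]$ as in Filaseta's arguments. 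The quadratic subfields (discriminants) only contribute a bounded abelian part, which the descent to $K$ absorbs.
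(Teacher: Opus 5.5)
Your architecture matches the paper's: a Galois criterion, then family-by-family verification, with different degrees handled by simplicity of $A_d$. Your proof of the criterion is a valid variant. After a Goursat/Ribet step upgrading pairwise to joint independence (as in Lemma~\ref{Galois-disjoint}), choose disjoint sets of more than $D$ roots inside the $A_d$-orbit of injective tuples. This gives a genuine product grid, so a Schwartz--Zippel argument yields $L_P\subseteq V$ directly (using that $V$ is cut out by polynomials of degree $\le \deg V$). The paper instead uses only the relative orbit in one coordinate plus dominance of a coordinate projection.

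The genuine gap is the step you flag as the main obstacle: showing that $L_n^{(\alpha)}$ and $L_n^{(\beta)}$ have different splitting fields for $\alpha\neq\beta$ and all large $n$. The tool you propose cannot do this when both parameters are integers. Let $\alpha\neq\beta$ be non-negative integers and $p$ a prime with $\tfrac{n+\max(\alpha,\beta)}{2}<p\le n$. Computing $v_p\bigl(\binom{n+\alpha}{n-i}/i!\bigr)$ shows that both polynomials have the same $p$-adic Newton polygon: a segment of slope $1/p$ and length $p$, followed by a horizontal segment of length $n-p$. So both have a totally ramified degree-$p$ component at $p$. Such primes fill all of $(n/2,n]$ apart from a window of bounded length, and they see no difference.

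The paper needs genuinely Diophantine input here:
\begin{enumerate}
\item Equality of discriminants in $\Q^\times/\Q^{\times 2}$ (Proposition~\ref{prop:basics-laguerre}(4)), plus primes in arithmetic progressions, gives $\alpha-\beta\in 2\Z$.
\item Siegel's theorem on the resulting hyperelliptic equation then gives $\alpha=\beta+2$.
\item Newton polygons at primes $p=mb+a$ (with $\alpha=a/b$ and $n/2<m<n-2$) carry totally ramified components of degrees $m$ and $m+2$, which forces $\alpha,\beta\in\Z$.
\item The integral case is handled at $p=p^+\bigl((n+\beta)(n+\beta-1)\bigr)$, which tends to infinity by Siegel's theorem, by comparing the slope-zero parts of the two Newton polygons near the leading coefficient.
\end{enumerate}
Same-degree pairs are also not confined to item (4). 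Up to normalisation, $H_{2n}(x)$ and $H_{2n+1}(x)/x$ are $L_n^{(-1/2)}$ and $L_n^{(1/2)}$ evaluated at a multiple of $x^2$, so the Hermite case needs the same separation.

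A second problem is in the signed case. You require $\Gal(f_n)$ to contain an index-$\le 2$ subgroup of $C_2\wr S_{d/2}$, but the results you cite only give $A_n\subseteq\Gal(g)$ for $f_n=g(x^2)$ or $x\,g(x^2)$. The kernel $\Gal(f_n)\cap C_2^{n}$ is a $\Gal(g)$-submodule of $\F_2^n$ about which those results say nothing; a priori it could be $0$ or the line spanned by $(1,\dots,1)$. ``A nontrivial square class'' does not pin it down. You do not need this hypothesis. As in the paper, apply the criterion to $\{s^2:s\in\calS\}$ and pull back along the coordinatewise squaring map. The preimages of $x_i=x_j$ and $x_i=s^2$ are unions of the hyperplanes $x_i=\pm x_j$ and $x_i=\pm s$, with $-s\in\calS$ by parity.
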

\begin{remark}
    Arithmetic of some classical polynomial sequences is quite difficult to understand. A particular notorious case is that of Legendre polynomials $\mathrm{Leg}_n(x)$ for all $n$, where not only one cannot prove the irreducibility, but even the easier question of whether $\Leg_n$ and $\Leg_m$ might share nontrivial roots is open; these questions are known as Stieltjes conjecture(s), as they appear in Stieltjes's letter to Hermite \cite{stieltjes-hermite-letter}. See also \cite{cullinan2014galois} for an up-to-date summary of arithmetic of Legendre polynomials. 
\end{remark}

We expect that the property of being special holds for much larger sets $\calS$ of algebraic numbers of growing degree. For example, we propose the following question.

\begin{question}
Fix $H\geq 1$ and let $\calS_H$ be the set of roots of all monic polynomials 
in $\Z[x]$
with coefficients bounded by $H$. Is $\calS_H$ torsion-special?    
\end{question}

We prove that the answer is yes for a randomly sampled family of polynomials. 

\begin{theorem}\label{thm:RandomMM}
    There exists $H_0\geq 2$ such that for any set $I\subseteq \Z$ of at least $H_0$ consecutive integers the following holds. Let $f_1,f_2,\ldots$ be a sequence of independent random polynomials such that $f_n$ is chosen uniformly at random from the set of monic polynomials of degree $n$ with coefficients in $I$ and with nonzero free coefficient. Let $\calS \subseteq \overline{\Q}$ be the set of all roots of all $f_n$.
    Then, $\calS$ is special almost surely. 
\end{theorem}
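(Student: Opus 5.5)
This is a plan, not a proof. The idea is to reduce the statement to a uniform Galois-theoretic criterion, which the abstract says is the engine of the paper, and then check that random polynomials satisfy its hypotheses almost surely.

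\textbf{The criterion.} I would isolate it as follows. Let $\calS=\bigcup_n \mathrm{roots}(f_n)$. Suppose that for all but finitely many $n$:
\begin{enumerate}
\item $f_n$ is irreducible of degree $n$;
\item $\Gal(f_n)$ contains $A_n$;
\item the splitting fields of distinct large $f_n$ are, jointly with any fixed number field, ``as independent as possible''. Concretely, for any finite set of indices $n_1<\dots<n_k$ and any number field $K$, the Galois group of $f_{n_1}\cdots f_{n_k}$ over $K$ contains $A_{n_1}\times\dots\times A_{n_k}$, up to finitely many exceptional indices.
\end{enumerate}
Then $\calS$ is special.

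\textbf{Sketch of why the criterion suffices.} Let $V\subseteq\Aff^m$ be the Zariski closure of $B\subseteq\calS^m$, and argue by induction on $m$. We may assume $V$ is irreducible and defined over a number field $K$. Since $V\cap\calS^m$ is dense, it is Galois-stable over $K$ in the relevant sense. Take a point $P=(s_1,\dots,s_m)$ whose coordinates are roots of $f_{n_1},\dots,f_{n_m}$, grouped by equal indices. Large alternating groups act highly transitively on roots, so the $\Gal(\Qbar/K)$-orbit of $P$ consists of all tuples with the same coincidence pattern among coordinates; coordinates coming from bounded-degree $f_n$ are finitely many fixed elements of $\calS$. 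The orbit has size roughly $\prod n_i$, while a proper subvariety with a non-special ``shape'' can only contain a set of such tuples of size $O(\deg\cdot \prod n_i/n_{\min})$, for instance by a Bézout / Schwartz--Zippel count on fibers. Hence a dense set of such points forces $V$ to contain the whole special linear space $\{x_i=x_j$ for coinciding indices, $x_i=s$ for bounded ones$\}$. Passing to limits along infinitely many $n$ then shows $V$ is a finite union of $\calS$-special spaces. I take this to be what the paper's criterion gives, and would invoke it as such.

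\textbf{Probabilistic input.} This is the real content. For a uniform random monic polynomial of degree $n$ with coefficients in an interval $I$ of length at least $H_0$ and nonzero constant term, known results give irreducibility and Galois group $\supseteq A_n$ (indeed $S_n$) with probability $1-O(n^{-c})$, for some $c>0$. These are the Bary-Soroker--Kozma and Bary-Soroker--Koukoulopoulos--Kozma theorems, where large $H_0$ is exactly what is needed; Breuillard--Varjú is the conditional analogue. Summability of the failure probabilities is the issue. If $c>1$ is not available, I would instead use Borel--Cantelli only along the independence structure, or strengthen to a rate $O(n^{-c})$ with $c>1$ by taking $H_0$ larger, since the error terms in those papers improve with $H$.

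\textbf{Joint independence and the main obstacle.} I expect property (3) to be the main obstacle. I would reduce it by Goursat's lemma: since $A_n$ is simple and nonabelian for $n\ge5$, with pairwise non-isomorphic factors for distinct $n$, a subdirect product of $A_{n_1}\times\dots\times A_{n_k}$ is the full product. Thus the only requirement is that each $\Gal(f_{n_i}/K)$ contains $A_{n_i}$ for every fixed number field $K$. This holds because $\Gal(f_n/K)\cap A_n$ is normal of bounded index in $\Gal(f_n/\Q)\cap A_n=A_n$, hence equal to it for large $n$. The residual danger is coincidences among roots of different $f_n$ of small degree, but these are excluded by irreducibility and distinct degrees. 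So everything reduces to the almost-sure eventual validity of (1) and (2), which follows from the cited probabilistic Galois results.
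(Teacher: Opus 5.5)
Your architecture (invoke the Galois criterion of Theorem~\ref{thm:GMM-intro}, then verify its hypotheses almost surely by Borel--Cantelli) is the paper's. The gap is that the probabilistic input you feed into it is false as stated.

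Hypothesis (1) fails almost surely for every $H_0$. The value $f_n(-1)=(-1)^n+\sum_{i<n}(-1)^i a_i$ is an alternating sum of $n$ independent coefficients, so by the local limit theorem $\Prob(x+1\mid f_n)\asymp n^{-1/2}$ for any interval. Since the $f_n$ are independent, the \emph{second} Borel--Cantelli lemma forces $x+1\mid f_n$ for infinitely many $n$ almost surely. So irreducibility of $f_n$, and likewise $\Gal(f_n)\supseteq A_n$ acting on $n$ roots, can never have summable error. Appealing to independence works against you, not for you.

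The paper instead writes $f_n=\Psi_n g_n$ with $\Psi_n=\gcd(f_n,\Phi^n)$, where $\Phi$ is the product of the cyclotomic polynomials of degree at most $2$. Their roots form a fixed finite set, which is harmless for the criterion. It then proves summable bounds separately for three events:
\begin{enumerate}
\item $\deg\Psi_n>8$, which forces a repeated cyclotomic root and has probability $O(n^{-3/2})$ by Peled--Sen--Zeitouni;
\item reducibility of $g_n$;
\item $\Gal(L_n/\Q)\notin\{A_{d_n},S_{d_n}\}$.
\end{enumerate}
Even the last bound is not off the shelf. The single-prime sieve of Bary-Soroker--Koukoulopoulos--Kozma gives only a power saving, and your claim that the exponent exceeds $1$ once $H_0$ is large is not something you can cite. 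The new ingredient is to sieve simultaneously modulo the first $r$ primes and require only one reduction $f_{n,p_i}$ to have a good factorization pattern. This gives failure probability $n^{-cr}$; one then takes $cr>3/2$ and $H_0=H_0(r)$ large enough to control the discrepancy $\Delta_P$.

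Removing the cyclotomic part also breaks your disjointness step. Now $d_n=\deg g_n$ is only known to lie in $[n-8,n]$, so $d_n=d_m$ can happen for $n<m\le n+8$. In that case the two alternating factors are isomorphic, Goursat's lemma gives nothing, and $L_n=L_m$ is perfectly consistent with the group theory. Your remark that coincidences are ``excluded by distinct degrees'' is exactly what is lost, and ruling them out requires a genuinely probabilistic step that your plan lacks.

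In the paper, a non-quadratic intersection forces $\Q[x]/g_n\cong\Q[x]/g_m$, so a Frobenius lift at $p$ has the same cycle type on both root sets. Two distinct parts $k_1,k_2\in[n^{4/5},n/\log n]$ of the factorization pattern of $f_{n,p}$ then force $f_{m,p}$ to have irreducible factors of degrees $k_i/\ell_i$ with $\ell_i\le\lceil\log n\rceil^3$. Since $f_m$ is independent of $f_n$, this event has probability $\ll n^{-3/2}$, which is summable over $n$ and over the at most eight relevant $m$.
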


The proofs of both the deterministic and probabilistic results are based
on a common Galois-theoretic criterion. Roughly speaking, it asserts
that a set of algebraic numbers is special
whenever its elements have growing degree, large Galois groups, and
almost disjoint splitting fields.

In order to be more precise, we recall that for $\alpha\in \overline{\Q}$, one sets $\deg \alpha  \coloneqq  [\Q(\alpha):\Q]$. We let $L_{\alpha}$ be the splitting field of $\Q(\alpha)/\Q$ and 
\[
    G_{\alpha}\coloneqq \Gal(L_{\alpha}/\Q),
\]
viewed as a permutation group induced from the Galois action on the conjugates of $\alpha$. 

\begin{definition}\label{def:Galois-typical}
\hypertarget{special*}{}
    A subset $\calS \subset \Qbar$ is said to satisfy \MMgeneric{} if the following conditions hold.
    \begin{enumerate}
        \item {\it Growing degree:} For every $N$ there are only finitely many $\alpha\in\calS$ with $\deg \alpha < N$.
        \item {\it Large Galois group:} $G_{\alpha}\in \{S_{\deg \alpha},A_{\deg \alpha}\}$ for all but finitely many $\alpha \in \calS$.
        \item {\it Disjointness:} For all but finitely many pairs $(\alpha, \beta) \in \calS^2$ either $\alpha$ and $\beta$ are  conjugate or $[L_{\alpha}\cap L_{\beta} :\Q]\leq 2$.
    \end{enumerate}
\end{definition}

\begin{theorem}[Generic Manin--Mumford]\label{thm:GMM-intro}
    If $\calS\subseteq \overline{\Q}$ satisfies \MMgeneric{} then $\calS$ is special. 
\end{theorem}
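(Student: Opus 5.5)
We argue by induction on the dimension $n$ of the ambient space, with a Galois-orbit argument at each stage. Let $B\subseteq\calS^n$ and let $V$ be an irreducible component of its Zariski closure. We must show $V$ is $\calS$-special. Discarding finitely many exceptional elements of $\calS$, we may assume conditions (1)–(3) of \MMgeneric{} hold for all $\alpha,\beta\in\calS$ that actually occur.

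\textbf{Reductions.}
\begin{itemize}
\item If some coordinate $x_i$ takes only finitely many values on $B\cap V$, then $V$ lies in a hyperplane $x_i=s$ with $s\in\calS$, and we conclude by induction.
\item If $x_i=x_j$ on a dense subset of $B\cap V$, then $V\subseteq\{x_i=x_j\}$, and again we induct.
\end{itemize}
So we may assume that on a dense set of points $b=(b_1,\dots,b_n)\in B\cap V$ the coordinates are pairwise distinct and all degrees $\deg b_i$ are large.

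\textbf{Field of definition.} $V$ is defined over a finitely generated field. Since $V$ is the closure of a set of algebraic points, it is defined over a number field $K$. Replacing $V$ by the union of its $\Gal(\Qbar/\Q)$-conjugates does no harm, because $\calS$ is Galois stable up to the exceptional finite set. Since $[K:\Q]$ is bounded, passing to $K$ changes each $G_{b_i}$ only by a subgroup of bounded index. For $\deg b_i$ large, such a subgroup still contains $A_{\deg b_i}$: a subgroup of $S_d$ of index less than $d$ contains $A_d$.

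\textbf{The key step: Galois theory of a tuple.} Fix a point $b$ and group its coordinates into conjugacy classes. Coordinates in different classes have splitting fields intersecting in degree at most $2$, and each group is $A_d$ or $S_d$ with $A_d$ simple and nonabelian. By Goursat's lemma, $\Gal(L_{b_1}\cdots L_{b_n}K/K)$ therefore contains $\prod_{\text{classes}} A_{d_c}$ acting independently. Within a single class, the joint action on the tuple of distinct conjugates is the action of $A_{d}$ on ordered $k$-tuples of distinct elements, which is transitive for $d\ge k+2$.

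Consequently, the $\Gal(\Qbar/K)$-orbit of $b$ has size at least roughly $\prod_c d_c^{k_c}$, where $k_c$ is the number of coordinates in class $c$. This orbit is contained in $V(\Qbar)$, and the orbit contains all tuples obtained by independently replacing the coordinates in each class with any distinct conjugates.

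\textbf{Turning the orbit into a product structure.} Choose a class $c$ containing a coordinate, say $x_1$, together with possibly other coordinates. Fix all coordinates outside the class, and fix all coordinates in the class except $x_1$. Then $x_1$ ranges over at least $d_c-(n-1)$ values, all lying in $V$. As $d_c\to\infty$, this means infinitely many points of $V$ share the remaining coordinates. The main obstacle is making this uniform so that it yields a genuine algebraic conclusion. The cleanest route is a degree count using Bézout. Intersect $V$ with the line $L$ obtained by fixing all coordinates other than $x_1$. Either $L\subseteq V$, or $|V\cap L|\le\deg V$, which is bounded independently of $b$. Since $d_c-(n-1)>\deg V$ for large $d_c$, we must have $L\subseteq V$.

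Applying this iteratively to each coordinate, we find that $V$ contains the full product of (closures of) these orbit lines. Hence $V$ contains the coordinate lines through Zariski-dense sets of points, and it follows that $V=\Aff^n$. In the case where the reductions have imposed equalities $x_i=x_j$, the same argument runs inside the corresponding special linear subspace.

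The delicate points are twofold. First, we need the bound $\deg b_i\to\infty$ uniformly along a dense subset. This follows from condition (1) once the finitely-many-values case is excluded, after passing to a dense subset where every coordinate has large degree. Second, we must verify that the degree-$\le2$ intersections in condition (3) do not obstruct the independence of the alternating factors. The latter is the Goursat step, and it uses the fact that $A_d$ has no nontrivial abelian quotient.
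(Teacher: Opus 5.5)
Your proposal is correct and is essentially the paper's argument. Goursat/Ribet-type independence of the alternating factors puts more than $\deg V$ Galois conjugates of $b$ on a coordinate line, Bézout forces that line into $V$, and a dimension argument finishes. The differences are minor: you obtain translation invariance of $V$ in every coordinate direction, hence $V=\Aff^n$, whereas the paper uses only the top-degree direction to contradict dominance of $\pi_1$; and the paper divides a relative degree over $\Q$ by $[K:\Q]$, whereas you keep the full product of alternating groups over $K$.

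One harmless slip: \MMgeneric{} does not make $\calS$ Galois stable, even up to a finite set. You never use this, since the argument only needs $V(\Qbar)$ to be stable under $\Gal(\Qbar/K)$.
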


All the proofs of the theorems above are reduced to showing condition \MMgeneric{}.

\begin{remark}
  It is natural to wonder whether other unlikely intersections statements, beyond the Manin--Mumford theorem, hold for points with generic coordinates. One natural candidate would be a Mordell--Lang type theorem for the subgroup generated by points in $\calS^n$. However, it is easy to produce counterexamples to this claim. For example, consider the curve $X \subset \G_m^2$ given by the equation $x+2y+6=0$. This curve is not a translate of a subgroup of $\G_m^2$, and thus one might expect that at most finitely many points on $X$ belong to the subgroup generated by $\calS^2$ (for $\calS$ satisfying \MMgeneric{}). We now show that this is not the case.
  
  For every prime $p>5$, let $\alpha_{p}$ be the root of the polynomial $f(t)=t^{p+3}+2t^{p}+6$. This polynomial is Eisenstein at $2$ and thus irreducible. Moreover, from the $3$-adic Newton polygon of $f$, its Galois group contains a $p$-cycle (coming from the tame inertia action). By Jordan's theorem, the Galois group of $\Q(\alpha_p)/\Q$ contains $A_{p+3}$. Thus the sequence $\alpha_p$ satisfies \MMgeneric{}. However, the points $(\alpha_p^{p+3}, \alpha_p^p)$ are dense in $X$.     
\end{remark}

The paper is organized as follows. In Section~\ref{sec:gMM} we prove Theorem~\ref{thm:GMM-intro}. Then, in Sections~\ref{sec:classical} and \ref{sec:Laguerre-Manin-Mumford}, we establish \MMgeneric{} for the zeros of the classical families of polynomials appearing in Theorem~\ref{thm:special-polynomials-intro}, hence proving it using  Theorem~\ref{thm:GMM-intro}. Finally, in Section~\ref{sec:RMM}, we establish \MMgeneric{} almost surely, hence proving Theorem~\ref{thm:RandomMM}.

\section*{Acknowledgements}

L.B.-S. was supported by the Israel Science Foundation (ISF), Grant No.~366/23.

B.K. was supported by the Israel Science Foundation (ISF), Grant No.~2112/26.

\section{Proof of Theorem \ref{thm:GMM-intro}}\label{sec:gMM}

Let $\calS \subseteq \overline{\Q}$ be a subset satisfying \MMgeneric{}. We want to prove that $\calS$ is special. We start by showing elements in $\calS$ have long relative Galois orbits:

\begin{lemma}\label{Galois-disjoint}
    Suppose \MMgeneric{} holds. Then there is an $N$ such that for every $n\geqslant 1$ and for all strings $(\alpha_1, \dots, \alpha_n) \in \calS^n$ with $\deg(\alpha_1) \geqslant \dots \geqslant \deg (\alpha_n) \geqslant N$ and $\alpha_i \neq \alpha_j$ we have 
    \[[\Q(\alpha_1, \dots, \alpha_n)/\Q(\alpha_2, \dots, \alpha_n)] \geqslant \deg(\alpha_1)-n.\]

\end{lemma}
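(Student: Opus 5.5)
The plan is to compare $K\coloneqq\Q(\alpha_2,\dots,\alpha_n)$ with a larger Galois-theoretically controlled field, using $[K(\alpha_1):K]\ge[M''(\alpha_1):M'']$ for any field $M''\supseteq K$. Write $d=\deg\alpha_1$ and $L=L_{\alpha_1}$. Split the indices $i\ge2$ into two sets: those with $\alpha_i$ conjugate to $\alpha_1$ (call these $\beta_1,\dots,\beta_k$, so $k\le n-1$) and those with $\alpha_i$ not conjugate to $\alpha_1$. Let $M'$ be the compositum of the $L_{\alpha_i}$ over the non-conjugate indices, and let $M''=M'(\beta_1,\dots,\beta_k)\supseteq K$.

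First choose $N$. By growing degree, only finitely many $\alpha\in\calS$ have $\deg\alpha<N_0$ for any fixed $N_0$. So the finitely many exceptions in the large Galois group condition and in the disjointness condition of \MMgeneric{} all involve elements of bounded degree. Take $N$ larger than all these degrees and also $N\ge 5$. Then, for every $\alpha_i$ in our string, $G_{\alpha_i}\in\{S_{\deg\alpha_i},A_{\deg\alpha_i}\}$. Moreover, for non-conjugate $\alpha_1,\alpha_i$ we have $[L\cap L_{\alpha_i}:\Q]\le 2$.

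The key step is to show that $[L\cap M':\Q]\le 2$. The subfield $L\cap M'$ is Galois over $\Q$, so it corresponds to a normal subgroup of $G=G_{\alpha_1}$. Since $d\ge5$, the only normal subgroups of $G$ are $1$, $A_d$ and $G$. Hence it suffices to rule out $L\subseteq M'$. Suppose $L\subseteq M'$. Then $H=\Gal(M'/\Q)$, which embeds in $\prod_i G_{\alpha_i}$, surjects onto $G$ and hence onto the nonabelian simple group $T=A_d$; call this surjection $\pi$. Let $N_i=\Gal(M'/L_{\alpha_i})$; these are normal subgroups with $\bigcap_i N_i=1$. Each $\pi(N_i)$ is normal in $T$, so it is $1$ or $T$.

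If every $\pi(N_i)=T$, we argue inductively. We have $\pi(N_1\cap N_2)\supseteq\pi([N_1,N_2])=[T,T]=T$, and iterating gives $\pi(\bigcap_i N_i)=T$, which contradicts $\bigcap_i N_i=1$. Hence some $\pi(N_i)=1$. This means the kernel of $H\to G$ contains $N_i$, i.e.\ $L\subseteq L_{\alpha_i}$, so $[L\cap L_{\alpha_i}:\Q]=|G|>2$. This contradicts disjointness, because $\alpha_i$ is not conjugate to $\alpha_1$. I expect this group-theoretic step, passing from pairwise disjointness to disjointness from a compositum, to be the main obstacle. The commutator trick above is what I would rely on.

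It follows that $\Gal(LM'/M')\cong\Gal(L/L\cap M')$ contains $A_d$. Since each $\beta_j$ lies in $L$, the group $\Gal(LM'/M'')$ contains the pointwise stabilizer of $\beta_1,\dots,\beta_k$ in $A_d$. As $\alpha_1\neq\beta_j$, this stabilizer acts transitively on the remaining $d-k\ge d-n+1$ roots, and $\alpha_1$ is among them (for $d-k\le 2$ the bound is trivial anyway). Hence $[M''(\alpha_1):M'']\ge d-n+1$. Finally $K\subseteq M''$ gives $[\Q(\alpha_1,\dots,\alpha_n):\Q(\alpha_2,\dots,\alpha_n)]=[K(\alpha_1):K]\ge d-n$.
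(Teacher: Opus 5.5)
Your argument is correct. It follows the same overall plan as the paper: first show that $L_{\alpha_1}$ meets the compositum of the splitting fields of the non-conjugate $\alpha_i$ in at most a quadratic field, then count the orbit of $\alpha_1$ under the pointwise stabilizer, in $A_d$, of its conjugates among $\alpha_2,\dots,\alpha_n$. The difference lies in how you prove the key disjointness step.

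The paper adjoins the compositum $E$ of all quadratic subfields, so that each $\Gal(L_jE/E)=A_{d_j}$ is simple. It then applies Ribet's lemma (Goursat) inductively to obtain the full decomposition $\Gal(L/E)\cong\prod_j A_{d_j}$, and reads off $\Gal(L/L')=A_{d_1}$. This gives complete structural information about the compositum. The price is that it uses large Galois groups for every conjugacy class in the string and pairwise near-disjointness among all of them.

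Your commutator argument proves only what is needed: normal subgroups $N_i$ with trivial intersection cannot all have images containing the perfect group $A_d$. It uses only that $G_{\alpha_1}$ is large and that $L_{\alpha_1}$ is nearly disjoint from each individual $L_{\alpha_i}$. It also avoids base-changing to $E$. It is essentially the core of the proof of Ribet's lemma, specialized to the one quotient you care about.

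There is one small slip to fix. When $G=S_d$, the surjection $H\to G$ does not yield a surjection onto $T=A_d$, since $S_d$ has no quotient isomorphic to $A_d$. Instead take $\pi\colon H\to G$ itself. Each $\pi(N_i)$ is normal in $G$, hence is either trivial or contains $A_d$. Because $[\pi(N_1),\pi(N_2)]\supseteq[A_d,A_d]=A_d$, your iteration shows that $\pi\bigl(\bigcap_i N_i\bigr)\supseteq A_d$ unless some $\pi(N_i)=1$. From there the argument goes through unchanged.
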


\begin{proof}
Let $I_1,\ldots, I_s$ be the partition of $\{\alpha_1,\ldots, \alpha_n\}$ into sets of conjugates with $\alpha_1\in I_1$. For each $j=1,\ldots,s$, let $d_j=\deg \alpha$ and $L_j=L_{\alpha}$, for some $\alpha\in I_j$ (this is clearly independent of the choice of $\alpha$). Let $L=L_1\cdots L_s$ and let $E\subseteq L$ be the compositum of all quadratic subfields of $L$. 

The assertion is trivial if $n\geqslant  \deg\alpha_1+1$ hence we always assume $n<\deg\alpha_1+1$. 
We may assume that $N\geq 5$ is  sufficiently large so that $L_{j_1}\cap L_{j_2}\subseteq E$ for all $1\leq j_1<j_2\leq s$ and $\Gal(L_j/\Q) \in \{S_{d_j} ,A_{d_j}\}$. Since $d_j\geq 5$, $A_{d_j}$ is both simple non-abelian and the unique subgroup of index $2$ in $S_{d_j}$. Hence, $\Gal(L_jE/E)=A_{d_j}$. Ribet's lemma and pairwise disjointness of $L_jE/E$ then gives  
\[
    \Gal(L/E) \cong \prod_{j=1}^s \Gal(L_jE/E) \cong \prod_{j=1}^s A_{d_j},
\]
as we now explain. The proof is by induction on $s$ with the base case $s=2$ given by the assumption. Let $L'$ be the compositum $EL_2\dots L_s$; by the induction hypothesis $\Gal(L'/E)=\prod_{j=1}^{s-1}A_{d_j}.$ Since $A_{d_1}$ is a simple group, if $\Gal(L/E)\neq A_{d_1}\times \Gal(L'/E)$, then $L_1E$ is a subfield of $L'$. Let $H\subset \prod_{j=1}^{s-1}A_{d_j}$ be the subgroup corresponding to $L_1E$. By pairwise disjointness, $H$ surjects onto each of the factors $A_{d_j}$, and hence, by Ribet's lemma, $H=\Gal(L'/E)$, giving a contradiction.

In particular, if we set $L'=EL_{2}\cdots L_{s}$, then $L=L_1L'$ and $\Gal(L/L')=A_{d_1}$. If we further write $I_1 = \{\alpha_1,\alpha'_2,\ldots, \alpha'_m\}$, then we conclude 
\[
    [\Q(\alpha_1,\ldots, \alpha_n):\Q] \geq [L'(\alpha_1,\alpha'_2,\ldots, \alpha'_m):L'(\alpha'_2,\ldots, \alpha'_m)] =[A_{d_1-m+1}:A_{d_1-m}] = d_1-m\geq d_1-n. 
\]
Here, $A_{d_1-m}$ and $A_{d_1-m+1}$ are the respective stabilizers of $I_1$ and $I_{1}\smallsetminus \{\alpha_1\}$ in $\Gal(L/L').$
\end{proof}

    We prove that the Zariski closure $\bar{B}$ of  $B\subseteq \calS^n$ is a finite union of $\calS$-special varieties  by induction on $n$. We begin with some simple reductions.

    It suffices to show that $B$ is contained in a finite union of special hyperplanes. Indeed,  restricting to each of the hyperplanes, we may then drop one of the variables, and apply the induction hypothesis. 
    
    It suffices to assume that each of the coordinate projections from $\bar{B}$ is dominant (and in particular $\bar{B}$ is a hypersurface). Otherwise, we may  use that the preimage of a special subvariety under a coordinate projection is special. 
    
    Moreover, we can assume that $\bar{B}$ is defined over some finite extension $K$ of $\Q$. 

    Consider the closed special set $Z$ consisting of the diagonals $x_i=x_j$ and all hyperplanes $x_i=\alpha$ for $\alpha \in \calS$ of degree less than $\max([K:\Q](\deg(\bar{B})+1) + n, N)$. Let $B'=B\smallsetminus(B\cap Z)$. Note that, by the above reductions, we can assume that $\bar{B}'$ is a hypersurface and each of the coordinate projections is dominant. Moreover, since $\bar{B}' \subset \bar{B}$, we have that $\deg \bar{B}'\leqslant \deg \bar{B}$; so without loss of generality we may replace $B$ by $B'$.
    
        If $\bar{B}$ is not a union of special hyperplanes, then it does not change if we remove from $B$ intersections with the diagonals $x_i=x_j$. Since $\bar{B}$ is not contained in the hyperplanes $x_i=\alpha$, we can remove finitely many points from $B$ and assume that all coordinates of all points of $B$ have degree larger than $\max([K:\Q](\deg(\bar{B})+1) + n, N)$, where $N$ is the constant of \Cref{Galois-disjoint}. Finally, by decomposing $B$ into a union of $n!$ sets, we can assume that all points $(\alpha_1, \dots, \alpha_n) \in B$ satisfy \[\deg(\alpha_1) \geqslant \dots \geqslant \deg (\alpha_n)\geqslant \max \left([K:\Q](\deg(\bar{B})+1)+n, N\right).\]

    Consider a point $\alpha=(\alpha_1, \dots, \alpha_n) \in B$. By \Cref{Galois-disjoint}, the $\Gal_\Q$ orbit of $\alpha$ contains at least $\deg (\alpha_1)-n$ points on the line $(t, \alpha_2, \dots, \alpha_n)$. Therefore the $\Gal_K$ orbit of $\alpha$ contains at least $(\deg (\alpha_1)-n)/[K:\Q] \geqslant \deg (\bar{B})+1$ points on that line. Since $\bar{B}$ is defined over $K$, all of the orbit has to belong to $\bar{B}$. But a line can intersect $\bar{B}$ in $\deg(\bar{B})+1$ points only if it belongs to $\bar{B}$ entirely. Since this is true for every point of $B$, the hypersurface $\bar{B}$ is also the Zariski closure of $\pi_1^{-1}(\pi_1(B))$, where $\pi_1$ denotes the coordinate projection. This means that the fibers of the projection $\pi_1: \bar{B} \to \Aff^{n-1}$ are not zero-dimensional above all the points of $\pi_1(B)$, and thus $\pi_1$ is not dominant. This is a contradiction.    \qed

\section{Classical polynomial sequences}\label{sec:classical}
Condition \MMgeneric{} is particularly easy to verify for a set $\calS$ which is a union of roots of a system of polynomials $P_n$ of growing degree. This leads to a Manin--Mumford type statement for many classical families of polynomials. In this section we prove all cases of Theorem \ref{thm:special-polynomials-intro} except for the family of generalized Laguerre polynomials, which requires more work.

\begin{theorem}
    Suppose $\calS \subset \Qbar$ is the collection of (all) roots of any one of the following families of polynomials:

    \begin{enumerate}
        \item Truncated exponentials: $e_n(x)=1+x+x^2/2+ \dots + x^n/n!$;
        \item Selmer trinomials: $p_n(x)=x^n-x-1$;
      
        \item Bessel polynomials: $y_n(x)=\sum_{i=0}^n \frac{(n+i)!}{2^i (n-i)! i!}x^i$.
    \end{enumerate}

    Then the set $\calS$ satisfies \MMgeneric{}, and so for any subset $B \subset \calS^n \subset \Aff^n(\Qbar)$, the Zariski closure $\bar{B}$ of $B$ in $\Aff^n_{\C}$ is a finite union of special subvarieties.
\end{theorem}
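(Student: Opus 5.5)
By Theorem~\ref{thm:GMM-intro}, it is enough to verify the three conditions of \MMgeneric{} for each family. I will argue uniformly: if each $P_n$ is irreducible of degree $n$ with Galois group $S_n$ or $A_n$ (for all but finitely many $n$), then all three conditions follow formally. \emph{Growing degree} holds because each $\alpha\in\calS$ is a root of some irreducible $P_n$, so $\deg\alpha = n$ (or $2n$ for Bessel-type degree conventions; here $\deg y_n = n$). Each $n$ contributes only finitely many roots, so only finitely many $\alpha$ have degree $<N$. \emph{Large Galois group} is exactly the hypothesis on $\Gal(P_n)$. \emph{Disjointness} follows by taking $\alpha,\beta$ non-conjugate with splitting fields $L_\alpha=L_m$ and $L_\beta=L_n$. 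If $m=n$ the roots are conjugate, since $P_n$ is irreducible, so we may assume $m\ne n$. Now $L_m\cap L_n$ is Galois over $\Q$, hence corresponds to a normal subgroup of $\Gal(L_m/\Q)\in\{S_m,A_m\}$. For $m\ge 5$ the normal subgroups are $1$, $A_m$ and $S_m$, so the intersection is $\Q$, a quadratic field, or all of $L_m$. Likewise it is $\Q$, a quadratic field, or all of $L_n$. If it equals $L_m$, then $L_m\subseteq L_n$, so $A_m$ is a quotient of a subgroup of $S_n$. Moreover it is a quotient of $\Gal(L_n/\Q)$ itself, and the only nonabelian simple quotient of $S_n$ or $A_n$ is $A_n$. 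This forces $m=n$, a contradiction. Hence $[L_\alpha\cap L_\beta:\Q]\le 2$ whenever $\min(m,n)\ge5$, leaving finitely many exceptional pairs.

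So the substance is the Galois-theoretic input for each family. I would quote known results rather than reprove them.
\begin{enumerate}
\item For $e_n$, Schur proved irreducibility and that $\Gal(e_n)$ is $A_n$ when $4\mid n$ and $S_n$ otherwise; Coleman gave a $p$-adic Newton polygon proof.
\item For $x^n-x-1$, Selmer proved irreducibility. Osada showed the Galois group is $S_n$: the discriminant is squarefree-adjacent, inertia acts through transpositions, and an irreducible group generated by transpositions is $S_n$.
\item For the Bessel polynomials $y_n$, Grosswald conjectured irreducibility with Galois group $S_n$. Filaseta and Trifonov proved irreducibility, and Filaseta--Trifonov / Filaseta--Kidd--Trifonov established the Galois group (large, via Newton polygons giving a $p$-cycle for a prime $p\in(n/2,n-2)$ together with Jordan's theorem, with the parity of the discriminant deciding between $A_n$ and $S_n$).
\end{enumerate}
If only ``contains $A_n$'' is available, that suffices for the large Galois group condition.

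The main obstacle is the Bessel case: the literature must be checked to ensure that both irreducibility and $A_n\le\Gal$ hold for \emph{all} large $n$, not just generically. The fallback is the standard Newton-polygon argument. Choose a prime $p$ with $n/2<p<n-2$ (Bertrand/Nagura). The $p$-adic valuations of the coefficients $\frac{(n+i)!}{2^i(n-i)!i!}$ give a Newton polygon segment of slope with denominator $p$. This yields a $p$-cycle in the Galois group, provided irreducibility is known. Primitivity of the group follows from this large $p$-cycle, and Jordan's theorem then gives $A_n\le\Gal$. Irreducibility itself I would take from Filaseta--Trifonov.
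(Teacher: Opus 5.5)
Your proposal is correct and follows the paper's proof: \MMgeneric{} is reduced to $A_n\le\Gal(P_n)$ for large $n$ via the normal-subgroup structure of $A_n$ and $S_n$, and the Galois input is then quoted. The paper cites Schur/Coleman for $e_n$, Selmer plus Nart--Vila (rather than Osada) for $x^n-x-1$, and for $y_n$ Filaseta's irreducibility plus Grosswald's result that irreducibility forces $A_n\le\Gal$. Grosswald's argument is essentially your Newton-polygon fallback, and that fallback does work: for every prime $p\in(n/2,n]$ the $p$-adic Newton polygon of $y_n$ has an edge of slope $1/p$ and length $p$. Note that Filaseta--Kidd--Trifonov concerns Laguerre polynomials, not Bessel polynomials.

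One minor fix: there are infinitely many pairs with $\min(m,n)<5$, so apply the normal-subgroup argument only to the large-degree member $\beta$. This gives $[L_\alpha\cap L_\beta:\Q]\le 2$ unless $L_\beta\subseteq L_\alpha$, and for each of the finitely many small-degree $\alpha$ that happens for only finitely many $\beta$.
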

\begin{proof}
    Note that if $P_n(x)$ is a sequence of degree $n$ polynomials for which, for large $n$, the polynomials $P_n(x)$ have Galois groups containing $A_n$, then the condition \MMgeneric{} is automatically satisfied. The disjointness property of \MMgeneric{} follows from group theory: since $A_n$ is a simple group for $n\geqslant 5$, Galois extensions with groups containing $A_n$ and $A_m$ respectively intersect by at most a quadratic extension whenever $n>m>5$. Therefore we just need to verify that the listed polynomials have large Galois groups.
    
    For the truncated exponentials $e_n(x)$ this is the classical theorem of Schur \cite{schur1930gleichungen}; see also Coleman's proof \cite{coleman1987galois}. For Selmer trinomials $p_n(x)$, the irreducibility was proved by Selmer \cite{selmer1956irreducibility} (hence the name), and the Galois group statement was proved by Nart and Vila \cite{nart1979equations}. For Bessel polynomials $y_n(x)$, eventual irreducibility was proven relatively recently by Filaseta \cite{filaseta1995irreducibility}; previously\footnote{the first edition of the cited book was published in 1978} Grosswald proved that irreducibility implies that the Galois group contains $A_n$ \cite[Chapter 12]{grosswald2006bessel}.   
\end{proof}

Some classical families of polynomials are odd (or even), and so the corresponding families $\calS$ are not special, but only signed-special. Nevertheless, we can reduce these cases to Theorem \ref{thm:GMM-intro}.

\begin{theorem}
    Suppose $\calS\subset \Qbar$ is the collection of roots of one of the following families of polynomials:
    \begin{enumerate}
        \item Truncated cosine: $\cos_n(x)=1-x^2/2!+\dots +(-1)^nx^{2n}/(2n)!$;
        \item Hermite polynomials $H_n(x)=(-1)^ne^{x^2}\frac{d^n}{dx^n} e^{-x^2/2}$;
        \item  Legendre polynomials: $\mathrm{Leg}_{p+1}(x)=\frac{1}{2^{p+1}(p+1)!} \frac{d^{p+1}}{dx^{p+1}}(x^2-1)^{p+1}$ for $p$ prime \emph{assuming the Hardy--Littlewood conjecture \cite[Conjecture D]{hardy1923some}}.
    \end{enumerate}
Then for any subset $B\subset \calS^n\subset \Aff^n(\Qbar)$ the Zariski closure $\bar{B}$ of $B$ is a finite union of affine linear subspaces $\Lambda$ each of which is, in turn, a finite intersection of hyperplanes of the form $x_i=x_j$ or $x_i=-x_j$ or $x_i=s$ for $s\in \calS$.
\end{theorem}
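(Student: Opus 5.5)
The plan is to reduce to Theorem~\ref{thm:GMM-intro} by passing to squares. Each of the three families consists of even polynomials of degree $2n$ (or $p+1$). For truncated cosine this is immediate. For Hermite, $H_n$ is even or odd according to the parity of $n$, so one discards the root $0$ in odd degree. For $\mathrm{Leg}_{p+1}$ with $p$ odd, the degree $p+1$ is even and the polynomial is even. Hence we may write $P(x)=Q(x^2)$ with $\deg Q\approx\deg P/2$, up to a factor of $x$. Set $\calS^2=\{s^2: s\in\calS\}$, which is the set of roots of the family $Q_n$.

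\textbf{Step 1.} Verify \MMgeneric{} for $\calS^2$. As in the proof of the previous theorem, it suffices to show that $\Gal(Q_n)$ contains $A_{\deg Q_n}$ for all large $n$; disjointness then follows from simplicity of alternating groups. For the truncated cosine, $Q_n(y)=\sum_k (-1)^k y^k/(2k)!$, and large Galois group is a known Schur-type result (Coleman's Newton polygon method adapts). For Hermite, the even and odd parts are generalized Laguerre polynomials $L_m^{(\mp1/2)}$ in $x^2$; Schur proved these have Galois group $S_m$ or $A_m$. For Legendre, $\mathrm{Leg}_{p+1}(x)=Q(x^2)$ with $\deg Q=(p+1)/2$. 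Here I would cite the known results: irreducibility together with large Galois group of $Q$ follows from $p$-adic Newton polygon and Eisenstein-type arguments once a suitable prime $q$ with $(p+1)/2$ related to $q$ exists. The Hardy--Littlewood conjecture is what guarantees this, and this is where it enters. I expect this Legendre step to be the main obstacle, and I would rely on the literature (e.g.\ \cite{cullinan2014galois}) rather than reprove it.

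\textbf{Step 2.} Apply Theorem~\ref{thm:GMM-intro} to $\calS^2$, so $\calS^2$ is special. Given $B\subseteq\calS^n$, consider the squaring map $\sigma:\Aff^n\to\Aff^n$, $(x_i)\mapsto(x_i^2)$. Then $\sigma(B)\subseteq(\calS^2)^n$, so $\overline{\sigma(B)}$ is a finite union of $\calS^2$-special varieties $W$. Since $\sigma$ is finite, $\bar B\subseteq\sigma^{-1}(\overline{\sigma(B)})$. Each $\sigma^{-1}(W)$ is a finite union of linear spaces cut out by $x_i=\pm x_j$ and $x_i=\pm\sqrt{t}$, where $t\in\calS^2$. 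Because the families are even, $\pm\sqrt t\in\calS$. So $\bar B$ lies in a finite union of $\calS^\pm$-special spaces $\Lambda$.

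\textbf{Step 3.} Finish by induction on $n$, as in the proof of Theorem~\ref{thm:GMM-intro}. Restrict to each $\Lambda$ and eliminate variables using the defining equations $x_i=\pm x_j$ or $x_i=s$. This identifies $B\cap\Lambda$ with a subset of $\calS^{\dim\Lambda}$, noting that $-\calS=\calS$. If $\Lambda\neq\Aff^n$, the induction hypothesis applies to that subset. If $\Lambda=\Aff^n$, then $\bar B\subseteq\Aff^n$ is trivially special. Pulling back, we obtain that $\bar B$ is a finite union of $\calS^\pm$-special varieties, which is the claimed form.
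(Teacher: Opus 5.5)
Your route is the paper's: pass to the set of squares $\hat{\calS}=\{s^2:s\in\calS\}$ (your $\calS^2$), apply Theorem~\ref{thm:GMM-intro} to it, and pull back along the squaring map. However, Step~1 has a gap in the Hermite case.

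Simplicity of $A_d$ only gives disjointness between splitting fields of \emph{different} degrees. A Galois extension with group $A_d$ or $S_d$, $d\ge 5$, that meets another Galois extension in more than a quadratic field is contained in it. So two such fields of degrees $d\neq d'$ meet in at most a quadratic field, but for $d=d'$ they may simply coincide. That is why the previous theorem needs one polynomial per degree.

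After squaring, $H_{2m}$ and $H_{2m+1}$ both contribute a polynomial of degree $m$: up to a fixed rescaling of the variable, $L_m^{(-1/2)}$ and $L_m^{(1/2)}$, as you note yourself. So for every $m$ there are non-conjugate elements of $\hat{\calS}$ of the same degree whose splitting fields you have not shown to differ. If they coincided for infinitely many $m$, the disjointness condition of Definition~\ref{def:Galois-typical} would fail. The paper's proof passes over this point in the same way, but an argument is needed.

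You can invoke Lemma~\ref{lem:Laguerre-disjoin} with $\{\alpha,\beta\}=\{\tfrac12,-\tfrac12\}$, or argue directly as follows. For large $m$ both groups lie in $\{A_m,S_m\}$ with $m\ge 5$, so a more-than-quadratic intersection forces equal splitting fields. That in turn forces the two discriminants into the same square class; this class is unchanged by rescaling $x$ or the polynomial. On the other hand, by Proposition~\ref{prop:basics-laguerre}(4), with $J=\lfloor m/2\rfloor$,
\[
\frac{\operatorname{disc} L_m^{(1/2)}}{\operatorname{disc} L_m^{(-1/2)}}=\prod_{i=2}^{m}\Bigl(\frac{2i+1}{2i-1}\Bigr)^{i-1}\equiv\prod_{\substack{3\le k\le 4J+1\\ k\ \mathrm{odd}}}k \pmod{\Q^{\times 2}}.
\]
By Bertrand's postulate there is a prime $q\in\bigl(\tfrac{4J+1}{2},\,4J+1\bigr]$, and it divides the right-hand side exactly once. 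Hence the ratio is not a square, a contradiction.

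Your Step~3 is a worthwhile addition. The pullback in Step~2, like the pullback in the paper's proof, only shows $\bar B\subseteq\bigcup_W\sigma^{-1}(W)$; your induction upgrades this to $\bar B$ being a finite union of such subspaces. The case $\Lambda=\Aff^n$ needs one more line, though, since ``$\bar B\subseteq\Aff^n$'' is not a conclusion. If some $W$ equals $\Aff^n$, then $\overline{\sigma(B)}=\Aff^n$. Since $\sigma$ is finite, $\sigma(\bar B)$ is closed, contains $\sigma(B)$, and has dimension $\dim\bar B$, so $\bar B=\Aff^n$. Otherwise every $\Lambda$ lies in a hyperplane $x_i=\pm x_j$ or $x_i=s$, and your induction applies.
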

\begin{proof}
    Consider the map $\phi_n:
    \Aff^n_\Q \to \Aff^n_\Q$ which squares all coordinates. Then the Galois groups of all but finitely many elements of $\hat{S}\coloneqq \phi_1(\calS)$ contain the alternating group; for $\cos_n$ this is a theorem of Shokri, Shafaff, and Taleb \cite{shokri2019galois}, for Hermite polynomials this is due to Schur \cite{schur1931affektlose}, and for Legendre polynomials, conditionally on the Hardy--Littlewood conjecture, this follows from \cite[Theorem 1.8]{cullinan2014galois} and \cite{holt1913irreducibility}. 
    
    Therefore $\hat{S}$ satisfies \MMgeneric{}, and so, by \Cref{thm:GMM-intro} the Zariski closure of the set $\phi_n(B)$ is a finite union of special linear subspaces. Since a special affine linear subspace $V$ is cut out by equations of the form $x_i=x_j$ and $x_i=s^2$, for $s \in \calS$, the preimage $\phi_n^{-1}(V)$ of such a subspace is a finite union of affine linear subspaces cut out by equations $x_i=\pm x_j$ and $x_i=s$, as claimed. \end{proof}

\section{Laguerre--Manin--Mumford}\label{sec:Laguerre-Manin-Mumford}
In this section we will prove the Laguerre--Manin--Mumford theorem. We first summarize some basic properties of Laguerre polynomials in the following proposition.

\begin{proposition}\label{prop:basics-laguerre}
    The generalized Laguerre polynomials $L_n^\alpha \in \Q[x]$ are a family of polynomials defined for every $\alpha \in \Q$ and satisfying the following properties:

    \begin{enumerate}\label{thm:Laguerre-Galois}
        \item We have the formula \[L_n^{\alpha}(x)=\sum_{i=0}^n \frac{(n+\alpha)(n-1+\alpha)\dots (i+1+\alpha)}{i!(n-i)!}(-x)^i;\]
        \item If $\alpha$ is not a (strictly) negative integer, then for all but finitely many $n$ the polynomial $L_n^{\alpha}(x)$ is irreducible and has Galois group which contains $A_n$;
        \item For an integer $k$, $L_n^{-k}(x)=x^k L^{k}_{n-k}(x)$;
        \item The discriminant of $L_n^\alpha$ is 
        \[\prod_{i=2}^n i^i (\alpha+i)^{i-1}.\]
    \end{enumerate}
\end{proposition}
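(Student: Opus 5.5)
This proposition collects standard facts, and I would treat each item separately, quoting the classical literature where it exists and checking the elementary identities directly.

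For (1), I take the formula as the definition, or equivalently derive it from Rodrigues' formula $L_n^{\alpha}(x)=\frac{x^{-\alpha}e^x}{n!}\frac{d^n}{dx^n}(e^{-x}x^{n+\alpha})$. Expanding by Leibniz gives coefficients $\binom{n+\alpha}{n-i}\frac{(-x)^i}{i!}$. Writing $\binom{n+\alpha}{n-i}=\frac{(n+\alpha)\cdots(i+1+\alpha)}{(n-i)!}$ makes sense for every $\alpha\in\Q$, so this is the stated formula.

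For (3), I would compare coefficients directly. For $\alpha=-k$ and $i<k$, the product $(n-k)(n-k-1)\cdots(i+1-k)$ contains the factor $0$, because the factors run from $n-k\ge 0$ down to $i+1-k\le 0$. So only the terms with $i\ge k$ survive. Substituting $i=j+k$, the coefficient becomes
$$\frac{(n-k)\cdots(j+1)}{(j+k)!\,(n-j-k)!}(-1)^{j+k}.$$
This should be compared with the coefficient of $x^j$ in $L_{n-k}^{k}$, namely $\frac{(n)\cdots(j+k+1)}{j!\,(n-k-j)!}(-1)^j$. The identity $\frac{(n-k)!/j!}{(j+k)!}=\frac{n!/(j+k)!}{j!}\cdot\frac{(n-k)!}{n!}$ shows the two agree, but only up to the scalar $(-1)^k\frac{(n-k)!}{n!}$. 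So the statement holds up to a nonzero constant, which is the standard normalization $L_n^{-k}(x)=\frac{(n-k)!}{n!}(-x)^kL_{n-k}^k(x)$. Only the root sets matter for the paper, so I would note this and move on.

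For (4), the discriminant formula is classical (Schur, and Szegő's book), and the route is Stieltjes' method. The Laguerre polynomials satisfy the differential equation $xy''+(\alpha+1-x)y'+ny=0$. At a root $x_j$ this gives $y''(x_j)/y'(x_j)=-(\alpha+1-x_j)/x_j$. The three-term recurrence then lets one compute $\prod_j y'(x_j)$ through the resultant of $L_n^\alpha$ and $L_{n-1}^{\alpha}$ (or of $L_n^\alpha$ and $L_{n-1}^{\alpha+1}$, since the derivative is $-L_{n-1}^{\alpha+1}$). Induction on $n$ then yields $\prod_{i=2}^n i^{i}(\alpha+i)^{i-1}$ up to the normalizing power of the leading coefficient. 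I would cite this rather than redo it.

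The real content is (2), and here I would rely on the literature rather than prove it. Schur proved that $L_n^{(0)}$ and $L_n^{(1)}$ are irreducible with Galois group $S_n$ or $A_n$. For general rational $\alpha\notin\Z_{<0}$, the result that for all but finitely many $n$ the polynomial is irreducible with Galois group containing $A_n$ is due to Filaseta, Kidd, Trifonov and others, using Newton polygons. The usual approach has three steps. First, show irreducibility via $p$-adic Newton polygons at primes $p\in(n/2,n)$ not dividing the denominator of $\alpha$; Sylvester–Schur/Bertrand-type estimates supply such primes. Second, the same Newton polygons give a $p$-cycle in the Galois group with $p>n/2$. Combined with primitivity, which follows from transitivity and the existence of this $p$-cycle, Jordan's theorem gives $G\supseteq A_n$. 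I expect the main obstacle to be uniform irreducibility for all large $n$ given a fixed arbitrary $\alpha$: small-degree factors must be excluded via Newton polygons at several primes. This is exactly the hard part of the cited work, so I would quote it.
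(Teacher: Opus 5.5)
Your proposal is correct and matches the paper, which also proves this by citing the literature: item (2) is Hajir's theorem \cite{hajir2005galois}, building on the irreducibility result of Filaseta--Lam \cite{Filaseta2002}, and the remaining items are referred to \cite{hajir2004algebraic}. For fixed $\alpha\in\Q\smallsetminus\Z_{<0}$ those are the right references for (2), not Filaseta--Kidd--Trifonov. Your coefficient check is also right that (3) and (4) hold as stated only up to normalization. Precisely, $L_n^{-k}(x)=\frac{(n-k)!}{n!}(-x)^kL_{n-k}^{k}(x)$. Likewise, $\prod_{i=2}^n i^i(\alpha+i)^{i-1}$ is the discriminant of the monic polynomial $n!\,L_n^{\alpha}(-x)$, which differs from the discriminant of $L_n^\alpha$ by the square factor $(n!)^{2n-2}$. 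This is harmless, since the paper only uses root sets and the square class of the discriminant.
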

\begin{proof}
    The large Galois group statement was proved by Hajir \cite{hajir2005galois} building on the work of Filaseta and Lam \cite{Filaseta2002}. The remaining statements are classical; see for example the introduction to \cite{hajir2004algebraic} for these (and other) basic properties.
\end{proof}

To apply \Cref{thm:GMM-intro} we need to establish Galois disjointness for Laguerre polynomials. This is the content of the following lemma.

\begin{lemma}\label{lem:Laguerre-disjoin}
    For every $\alpha, \beta \in \Q\setminus \Z_{<0}$ there exists $N=N(\alpha,\beta)>0$ such that for every $n,m\geq N$, if  the splitting fields of the polynomials $L_{n}^\alpha$ and $L_{m}^\beta$ intersect by more than a quadratic extension, then $n=m$ and $\alpha=\beta$.
\end{lemma}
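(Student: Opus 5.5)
The plan is to reduce the statement first to group theory and then to a ramification comparison. By Proposition~\ref{prop:basics-laguerre}(2), for $n,m$ large the Galois groups $G_1=\Gal(L_n^\alpha)$ and $G_2=\Gal(L_m^\beta)$ contain $A_n$ and $A_m$ respectively, with $n,m\ge 5$. Let $M$ be the intersection of the two splitting fields. It is Galois over $\Q$, and $\Gal(M/\Q)$ is a quotient of both $G_1$ and $G_2$. For $n\ge 5$ the only quotients of $S_n$ or $A_n$ have order $1$, order $2$, or are the whole group. Hence if $[M:\Q]>2$, then $M$ equals both splitting fields. So $G_1\cong G_2$, and comparing orders gives $n=m$. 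It therefore remains to show: if $\alpha\neq\beta$ and $n\ge N(\alpha,\beta)$, then $L_n^\alpha$ and $L_n^\beta$ do not have the same splitting field. I will do this by exhibiting a prime that ramifies in one splitting field but not the other.

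Write $\alpha=a/b$ and $\beta=a'/b'$ in lowest terms. By Proposition~\ref{prop:basics-laguerre}(4), a prime $p>n$ is unramified in the splitting field of $L_n^\beta$ unless $p\mid a'+b'j$ for some $1\le j\le n$ (I also discard the finitely many primes dividing $b'$).
\begin{itemize}
\item \emph{Ramification in $L_n^\alpha$.} Take a prime $p>Cn$, with $C$ depending on $a,b,a',b'$, such that $p\mid a+bi_0$ for some $2\le i_0\le n$. For such $p$, exactly one factor $\alpha+i_0$ in the coefficients of Proposition~\ref{prop:basics-laguerre}(1) is divisible by $p$. Then the coefficients of $x^i$ with $i<i_0$ have $p$-adic valuation $v\ge 1$, and those with $i\ge i_0$ have valuation $0$. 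The Newton polygon therefore has a segment of slope $-v/i_0$ over $[0,i_0]$. If I also arrange $v=1$, or at least $i_0\nmid v$, this gives ramification of index divisible by a factor of $i_0>1$.
\item \emph{No ramification in $L_n^\beta$.} If $p$ also divided some $a'+b'j$, then
\[
p \mid b'(a+bi_0)-b(a'+b'j)=(ab'-a'b)+bb'(i_0-j).
\]
This quantity has absolute value $O(n)$, and it is nonzero unless $\alpha+i_0=\beta+j$. So for $C$ large, $p$ is unramified in $L_n^\beta$, except in the degenerate case where $\alpha-\beta\in\Z$.
\end{itemize}

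The main obstacle is producing such a prime: a prime $p>Cn$ dividing $\prod_{i\le n}(a+bi)$ to the first power at a single index. For $b\ge 2$, I would use a Sylvester--Schur type theorem for arithmetic progressions (Shorey--Tijdeman), or an elementary Chebyshev count. The product has size roughly $b^n n!$, while primes $\le Cn$ contribute at most about $e^{O(n)}\,n^{O(n/\log n)}$ beyond the factorial part. This should leave room for many large prime factors, most of them exact.

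The genuinely hard case is $\alpha-\beta\in\Z$, for instance both nonnegative integers. Then the values $a+bi$ and $a'+b'j$ overlap almost entirely, and the two discriminants differ only at primes in a window of bounded length near $n$. Here I would instead compare inertia at primes $p$ with $n/2<p\le n$, following the Filaseta--Lam/Hajir Newton-polygon analysis. At such $p$, the shift by $\alpha$ versus $\beta$ changes which coefficients are divisible by $p$, so it changes the slopes and hence the inertia cycle structures. I would look for a prime $p$ in this range at which the two Newton polygons give incompatible ramification indices. If that fails, I would fall back on comparing the quadratic subfields $\Q\bigl(\sqrt{\operatorname{disc}}\bigr)$ via the square class of $\prod_i (\alpha+i)^{i-1}$ against that of $\prod_i(\beta+i)^{i-1}$. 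This alone does not obviously suffice, but together with equality of the full splitting fields it should constrain $\alpha-\beta$ enough to finish.
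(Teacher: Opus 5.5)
Your group-theoretic reduction to $n=m$ with equal splitting fields is correct, and it is exactly how the paper begins. The genuine gap is the case $\alpha-\beta\in\Z$: this is where the real difficulty lies, and there you only describe what you would try. The tools you name do not suffice, for three reasons.
First, your certificate of non-ramification is the discriminant. But when $\alpha-\beta\in\Z$ the two discriminants have the same prime divisors, apart from prime factors $>n$ of the at most $|\alpha-\beta|$ numerators of $\max(\alpha,\beta)+i$ with $n-|\alpha-\beta|<i\le n$, and for most $n$ there are none.
Second, primes in $(n/2,n]$ do not help. Suppose $\alpha,\beta\in\Z$ and $(n+\max(\alpha,\beta))/2<p\le n$. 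For $\gamma\in\{\alpha,\beta\}$, the $p$-adic valuation of the coefficient of $x^i$ in $L_n^\gamma$ is $0$ for $i\le n-p$, $1$ for $n-p<i<p-\gamma$, $0$ for $p-\gamma\le i<p$, and $-1$ for $i\ge p$. So both Newton polygons are a segment of slope $-1/p$ and length $p$ followed by a horizontal segment, and the ramification indices they detect coincide.
Third, the square-class comparison cannot finish either. For $\alpha=\beta+2$ with $\beta\ge0$ it reduces to $(\beta+2)(\beta+2+2\lfloor n/2\rfloor)\in\Q^{\times2}$, which holds for infinitely many $n$.

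The paper's route through this case uses two further ideas.
(1) It treats $\prod_{k\le n/2}(\alpha+2k)(\beta+2k)=y^2$ as a Diophantine equation. Primes in progressions force $\alpha-\beta\in2\Z$. Then Siegel's theorem on integral points of $y^2=c\prod_{k=1}^{m}(x+\beta+2k)$, with $x=2\lfloor n/2\rfloor$ and $\alpha=\beta+2m$, bounds $m$.
(2) It uses the isomorphism $F_{n,\alpha}\cong F_{n,\beta}$ of the degree-$n$ fields to compare factorizations over $\Q_p$. This isomorphism is automatic for $n\ge7$, since $A_n$ and $S_n$ have a single conjugacy class of subgroups of index $n$, and it is a finer invariant than ramification in the splitting field. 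For $\alpha=\beta+2$ with $\alpha=a/b\notin\Z$, the prime $p=jb+a$ with $n/2<j<n-2$ yields totally ramified factors of degrees $j$ and $j+2$, which is impossible since $2j+2>n$. For integers, take $p=p^{+}((n+\beta)(n+\beta-1))$, which tends to infinity by Siegel. Then $L_n^\alpha$ has a $\Q_p$-factor of degree $2$ or $3$ with unit roots. This forces a root of $L_n^\beta$ of valuation at least $1/3$, contrary to $v_p(a_0/a_k)\le(k+\beta)/(p-1)$.

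Be warned that this case is delicate even in the paper. For $m=2$ the curve above is a conic, so Siegel only excludes $m\ge3$. For example, for $(\alpha,\beta)=(4,0)$ the square classes agree whenever $2(N+1)(N+2)$ is a square with $N=\lfloor n/2\rfloor$, which happens infinitely often by Pell. Also, the equalities $v_p(b_{n-1})=v_p(b_{n-2})=-v_p(n!)$ used in the last step require $p\nmid n(n-1)$, which fails e.g.\ for $\beta=0$ and $n$ prime.

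Even the case $\alpha-\beta\notin\Z$ needs repair. A prime dividing some $a+bi_0$ with $i_0\le n$ satisfies $p\le|a|+bn$. But ruling out $p\mid(ab'-a'b)+bb'(i_0-j)$ for every $2\le j\le n$ by size requires $p>bb'\max(i_0-2,\,n-i_0)-|ab'-a'b|$. Once $b'\ge2$ (for instance whenever $\alpha,\beta\notin\Z$), no prime meets both conditions for large $n$, so no choice of $C$ works.

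The fix is asymmetric, as in the paper's treatment of $\alpha/2,\beta/2$. Assume $b\ge b'$. Use the prime number theorem in progressions (Sylvester--Schur is not needed) to take a prime $p=a+bi_0$, with $i_0\in(\frac{b'}{b}n+K,\,n]$ if $b>b'$, or $i_0\in(\frac n2+K,\,n]$ if $b=b'$, for a suitable constant $K$.
In the first case $p>|a'|+b'n$, so $p$ divides no $a'+b'j$.
In the second case $2p>|a'|+bn$, so $p\mid a'+bj$ forces $a'+bj=p$, that is, $\alpha-\beta\in\Z$.
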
 

\begin{proof}
    Suppose $n\geqslant m$, and let $F_{n,\alpha}=\Q[x]/(L_{n}^\alpha(x))$ be the field generated by a root of $L_{n}^\alpha$, $K_{n,\alpha}$ the splitting field of $L_n^{\alpha}$, and similarly define $F_{n,\beta}$ and $K_{n,\beta}$. By \Cref{thm:Laguerre-Galois}, we can choose $N$ sufficiently large so that the Galois group of $K_{n,\alpha}$ is $S_n$ or $A_n$ and $n\geq 5$. If $K_{n,\alpha}$ intersects another Galois extension $L/\Q$ by more than a quadratic subfield, then $K_{n, \alpha} \subset L$ by the simplicity of the group $A_n$. In particular,  if $[K_{n,\alpha} \cap K_{m, \beta}:\Q]>2$, then $n=m$ and $K_{n,\alpha}=K_{n,\beta}$. Note that this immediately implies that  $F_{n,\alpha}\cong F_{n,\beta}$. Hence, it remains to prove that if $n$ is sufficiently large and $\alpha>\beta$, then $K_{n, \alpha}\neq K_{n, \beta}$. 
    
    Assume now, for the sake of contradiction, that $F_{n,\alpha}\cong F_{n,\beta}$ and  $K_{n, \alpha}=K_{n, \beta}$ for infinitely many $n$. 
    
    First we compare the discriminants as elements of $\Q^\times/\Q^{\times 2}$ to conclude that $\alpha=\beta + 2$: Using  \Cref{thm:Laguerre-Galois}(4) we get that the equation
    \begin{equation}
    \label{eq:disc_square_eq}    
    (\alpha+2)(\alpha+4)\dots (\alpha+ 2\lfloor n/2 \rfloor)(\beta+2)\dots (\beta+2 \lfloor n/2\rfloor) =y^2
    \end{equation}
    has infinitely many solutions $(n, y)$. The treatment of this equation depends on whether $\alpha-\beta$ is an even integer, and we separate the discussion accordingly. 

    \medskip
    We claim that $\alpha-\beta\in 2\Z$. Otherwise,  let $\alpha'=\alpha/2$, $\beta'=\beta/2$ and $\alpha'-\beta'\not\in \Z$. Write $\alpha'=a/b$ and $\beta'=c/d$ as simple fractions.  Suppose without loss of generality that $b \geqslant d$. Then for infinitely many $n$ the expression 
    \begin{equation}
    \label{eq:alpha'beta'prod}
    (\alpha'+1)(\alpha'+2)\dots(\alpha'+\lfloor n/2\rfloor)(\beta'+1)(\beta'+2)\dots(\beta'+\lfloor n/2\rfloor)
    \end{equation} is a square up to a power of $2$. 
    
    By the prime number theorem in arithmetic progressions, for any $\epsilon>0$ and for all sufficiently large $n$ there is an integer $\lfloor n/2 \rfloor>j>n/2 (1-\epsilon)$ such that $a+jb=p$ is prime, that is, $\alpha'+j=\frac{p}{b}$. Clearly $p\nmid bd$, hence it divides the numerator of other factor in  \eqref{eq:alpha'beta'prod}. Since $p\geqslant (n/2)b(1-\epsilon)$, none of the numerators of  $\alpha'+j'$, $j'\neq j$ is divisible by $p$ (otherwise, $p \mid b(j-j')$, but $p\nmid b$  and $p>n/2>j-j'$.) 

    Therefore there exists $i$ such that $p$ divides the numerators of $\beta'+i=\frac{c+di}{d}$. But $p \geqslant (n/2)d (1-\epsilon)$, so $p = c+d_i$. In particular, we get that 
    \[
        \frac{n}{2}b(1-\epsilon) \leq p\leq c+\frac{n}{2}d,
    \]
    for all sufficiently large $n$. 
    This implies that $b=d$ (otherwise, take $\epsilon<\frac{b}{d}-1$). So $c+ib=a+jb$, hence $\alpha'-\beta'=(a-c)/b=(i-j)\in \Z$, contradicting the assumption. Hence we deduce that $\alpha=\beta +2m$ for some positive integer $m$, as claimed. 

    \medskip
    Next we claim that $m=1$, i.e., $\alpha=\beta+2$. 
    Indeed, plugging $\alpha= \beta+2m$ in \eqref{eq:disc_square_eq} and absorbing all repeated factors by a change of  variable $y\mapsto cy$, we get that the simplified equation 
    \[
    (\beta+2)\dots (\beta+2m) 
    (\beta+2+2\lfloor n/2 \rfloor)\dots (\beta+2m+ 2\lfloor n/2 \rfloor)=y^2
    \]
    has infinitely many solutions $(n,y)$. 
    
    Setting $x=2\lfloor n/2\rfloor$ and multiplying by a common denominator, we see that the hyperelliptic equation
    \[C_1y^2=C_2(x+c_3)(x+(c_3+2))\dots (x+(c_3+2m))\]
    has infinitely many integer solutions $(x,y)$ (for some constants $C_1, C_2, C_3$ which depend on $\alpha$ and $\beta$). By Siegel's theorem, $m=1$, as claimed.

    \medskip
    To this end, we have  $\alpha=\beta+2$. Discriminant considerations are not sufficient to get a contradiction, and we analyze this by comparing Newton polygons at carefully chosen primes and using the isomorphism $F_{n,\alpha}\cong F_{n,\beta}$. 

    \medskip
    We claim that $\alpha$ (and therefore $\beta$) is an integer. Otherwise, we write $\alpha=a/b$  and choose a prime number $p$ of the form $mb+a$ for $n/2<m<n-2$. Then the $p$-adic valuation of the coefficients of $x^i$ in $L_n^\alpha$ is clear from the formula: it is $1$ for $i<m$  and zero for $i\geqslant m$. Therefore the polynomial $L_n^\alpha$ has a root of valuation $1/m$. Since $m>n/2$, this means that the \'etale algebra $F_{n, \alpha} \tensor \Q_p$ has a component which is a totally ramified field extension of $\Q_p$ of degree $m$. Applying the same analysis to $L_n^\beta$, shows that the Newton polygon has a segment of slope $1/(m+2)$. Therefore  $F_{n, \beta} \tensor \Q_p$ also has a totally ramified component of degree $m+2$. This is a contradiction, since $F_{n, \alpha} \isom F_{n, \beta}$ are degree $n$ fields, and $2m+2>n$. This contradiction implies our claim that $\alpha,\beta\in \Z$. 

    \medskip
    To this end, we have integers $\alpha,\beta\in \Z$ such that $\alpha=\beta+2$ and $F_{n,\alpha}\cong F_{n,\beta}$ for infinitely many $n$. 
    Let 
    \[
        h(x) = (x +\alpha-2)(x+\alpha-3)=(x+\beta)(x+\beta-1)
    \]
    and for each $n>0$ let $p=p^{+}(h(n))$ (where $p^{+}(k)$ is the maximal prime divisor of $k$). By Siegel's theorem\footnote{This immediately follows from Siegel's theorem on integral points, but was established by Siegel earlier \cite{siegel1921approximation}. For a quantitative version see \cite{shorey1976greatest}.}, 
    $p\to \infty$ as $n\to \infty$. For the rest of the proof we fix a sufficiently large $n$ so that $p$ is also sufficiently large. In particular, we may assume that $h(x)$ and $h(x+2)$ are coprime modulo $p$. Since $h(n)\equiv 0\mod p$, we derive $h(n+2)\neq 0\mod p$, so $p\nmid (n+\alpha)(n+\alpha-1)$. 

    The $i$th coefficient of $L_n^{\alpha}$ is of the form $b_i=(-1)^{i} \frac{c_i}{i!(n-i)!}$, where $c_i = \prod_{j=0}^{n-i-1}(n-j+\alpha)$. Thus $p\nmid c_nc_{n-1}c_{n-2}$, so if  $p>4$, we deduce that 
    \[
        v_p(b_n)=v_p(b_{n-1})=v_p(b_{n-2}) = -v_p(n!).
    \]
    For $i\leq n-4$, we have that $h(n)\mid c_i$, so $v_p(c_i)>0$. Using that  $v_p(n!)\geq v_p(i!(n-i)!)$, we conclude that 
    \[
        v_p(b_i) = v_p(c_i) - v_p(i!(n-i)!)>-v_p(n!)=v_p(b_n).
    \]
    We conclude that the Newton polygon of $L_{n}^\alpha$ has a segment of slope $0$ and length $2$ or $3$ (depending on whether $v_p(b_{n-3})$ equals $-v_p(n!)$ or not). Thus, 
    \[
        F_{n,\beta}\tensor \Q_p\cong F_{n,\alpha}\tensor \Q_p
    \]
    has a component of degree $2$ or $3$. In particular, $L_{n}^{\beta}$ has a divisor of degree $2$ or  $3$ over $\Q_p$, so it has at least two roots with valuation in $1/2\Z \cup 1/3\Z$. 
    
    On the other hand, similar computation as before shows that the last interval in the Newton polygon of $L^{\beta}_n$ either has slope  $0$ and length $1$ or has a positive slope; hence, all roots but at most one have positive valuation. Since we have at least two roots with valuation in $1/2\Z \cup 1/3\Z$, we deduce that the maximal valuation of roots is $\geq \frac{1}{3}$. Since the first interval corresponds to the roots with the largest valuation, we deduce that the first interval has slope $\geq \frac{1}{3}$. Writing $a_i$ for the coefficients of $L_{n}^\beta$, we have $v_p(a_0)-v_p(a_k)\geq k/3$. On the other hand, a direct computation gives that 
    \[
        \frac{k}{3}\leq v_p(a_0/a_k) = v_p\left(\binom{n}{k}\prod_{j=n-k}^{n-1}(n+\beta-j)\right) \leq v_p((k+\beta)!) \leq \frac{k+\beta}{p-1},
    \]
    where the last inequality follows from Legendre's formula. Taking $p$ sufficiently large leads to a contradiction, hence finishes the proof.
\end{proof}

With the basic Galois theory of Laguerre polynomials established, we can now prove the main result of this section.

\begin{theorem}
    Given a finite collection of rational numbers $\alpha_1, \dots, \alpha_m \in \Q$, let $\calS$ be the collection of all roots of the generalized Laguerre polynomials $L_n^{\alpha_i}$ for all $n$. Then the set $\calS$ satisfies \MMgeneric{}, and so for any subset $B \subset \calS^n \subset \Aff^n(\Qbar)$, the Zariski closure $\bar{B}$ of $B$ in $\Aff^n_{\C}$ is a finite union of special subvarieties.
\end{theorem}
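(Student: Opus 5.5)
We verify the three conditions of \MMgeneric{} for $\calS$ and then invoke \Cref{thm:GMM-intro}. Before doing so we make two reductions. First, by \Cref{prop:basics-laguerre}(3), if some $\alpha_i=-k$ is a negative integer then $L_n^{-k}(x)=x^kL_{n-k}^{k}(x)$. Its roots are $0$ together with the roots of $L^{k}_{n-k}$, so we may replace $\alpha_i$ by $k\geq 0$ and add the single point $0$ to $\calS$. Second, adding or removing finitely many elements does not affect \MMgeneric{}, since each condition only concerns all but finitely many elements or pairs. So we may assume every $\alpha_i\in\Q\setminus\Z_{<0}$.

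\emph{Large Galois group and growing degree.} By \Cref{prop:basics-laguerre}(2), for each $i$ there is $N_i$ such that for $n\geq N_i$ the polynomial $L_n^{\alpha_i}$ is irreducible with Galois group $S_n$ or $A_n$. For such $n$ every root has degree exactly $n$. For each $n<N_i$ there are only finitely many roots. Hence only finitely many elements of $\calS$ have degree below any bound, and all but finitely many $\gamma\in\calS$ have $G_\gamma\in\{S_{\deg\gamma},A_{\deg\gamma}\}$. These are conditions (1) and (2).

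\emph{Disjointness.} Take $N$ larger than all $N_i$ and all constants $N(\alpha_i,\alpha_j)$ from \Cref{lem:Laguerre-disjoin}, over the finitely many pairs $(i,j)$. Let $\gamma,\delta\in\calS$ be of degree $\geq N$, so that $\gamma$ is a root of an irreducible $L_n^{\alpha_i}$ and $\delta$ of an irreducible $L_m^{\alpha_j}$, with $n,m\geq N$. Then $L_\gamma$ and $L_\delta$ are the splitting fields of these polynomials. If $[L_\gamma\cap L_\delta:\Q]>2$, \Cref{lem:Laguerre-disjoin} gives $n=m$ and $\alpha_i=\alpha_j$. So both are roots of the same irreducible polynomial and hence are conjugate. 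The exceptional pairs involve an element of degree $<N$, and there are finitely many such elements. However, a pair with one small-degree element and one large-degree element is not literally excluded this way, and there are infinitely many such pairs.

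This last point is the main obstacle. Consider $\delta$ with $\deg\delta<N$ and $\gamma$ with $\deg \gamma=n$ large. Then $L_\gamma\cap L_\delta$ is a Galois subextension of $L_\gamma$, whose group is $S_n$ or $A_n$ with $n\geq5$. Its normal subgroups are $1$, $A_n$ and $S_n$, so the intersection has degree $1$, $2$, or $|G_\gamma|\geq n!/2$. The last option is impossible once $n!/2>[L_\delta:\Q]$, which holds for $n$ large since the finitely many small $\delta$ have bounded splitting-field degree. The same simplicity argument also covers a pair from different families with $n\neq m$, as in the first paragraph of the proof of \Cref{lem:Laguerre-disjoin}. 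Thus only finitely many pairs violate condition (3), so \MMgeneric{} holds and \Cref{thm:GMM-intro} gives that $\calS$ is special.
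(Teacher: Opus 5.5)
Your proof is correct and follows the paper's argument. You remove negative-integer parameters via Proposition~\ref{prop:basics-laguerre}(3), get growing degree and large Galois group from Proposition~\ref{prop:basics-laguerre}(2) and disjointness from Lemma~\ref{lem:Laguerre-disjoin}, and then apply Theorem~\ref{thm:GMM-intro}.

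Your extra paragraph on pairs with one element of bounded degree fills in a detail the paper leaves implicit. It uses that a Galois subfield of an $S_n$ or $A_n$ extension with $n\geq 5$ has degree $1$, $2$, or at least $n!/2$. This paragraph is also what actually justifies your earlier claim that adjoining finitely many points, such as $0$, does not affect condition (3). That claim is not automatic, since one new point creates infinitely many new pairs.
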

\begin{proof}
    We first note that by part $(3)$ of Proposition \ref{prop:basics-laguerre}, we can assume that none of the $\alpha_i$ are negative integers, as long as we add $0$ to the set $\calS$. Conditions $(1)$, and $(2)$ of property  \MMgeneric{} are verified for $\calS$ in Proposition \ref{prop:basics-laguerre}, while the disjointness condition is the subject of Lemma \ref{lem:Laguerre-disjoin}. Therefore, we can apply the generic Manin--Mumford theorem (Theorem \ref{thm:GMM-intro}) to the set $\calS$ as claimed.
\end{proof}

\section{Typical sequences of polynomials}\label{sec:RMM}
    This section is devoted to the proof of Theorem~\ref{thm:RandomMM}. Throughout, let \(H_0\ge 2\) be a sufficiently large absolute constant, let \(I\subseteq\Z\) be an interval of length \(H\ge H_0\), and let
    \[
    f_1,f_2,\ldots
    \]
    be independent random polynomials, where, for each \(n\ge 1\), \(f_n\) is chosen uniformly from the set of monic polynomials of degree \(n\) with coefficients in \(I\) and nonzero constant term. Finally, let
    \[
    \calS=\bigcup_{n=1}^{\infty}\{\alpha\in\Qbar:f_n(\alpha)=0\}\subseteq\Qbar
    \]
    denote the random set of all roots of the polynomials.

    Theorem~\ref{thm:GMM-intro} reduces the proof to the statement that \(S\) satisfies \MMgeneric{} almost surely. 

    Our approach is not specific to the uniform-on-an-interval-coefficient model considered in
    Theorem~\ref{thm:RandomMM}. The same strategy is expected to apply to other models
    of random polynomials for which sufficiently strong irreducibility and Galois-group
    estimates are available, including independent coefficients with general distributions
    \cite{breuillard2019irreducibility,BarySoroker-Koukoulopoulos-Kozma2023}
    and characteristic polynomials of random matrices    \cite{barysoroker2026tridiagonal,eberhard2022characteristic}.
    For simplicity, we restrict ourselves to the model above.

    The remainder of the section is devoted to verifying \MMgeneric{}.

\subsection{A Borel--Cantelli criterion}
    A key probabilistic tool is the
    Borel--Cantelli lemma. It is not enough for the relevant probabilities to tend to \(1\) as
    \(n\to\infty\). Instead, we require summable error terms, so that only
    finitely many exceptional values of \(n\) occur almost surely.
    
    Before formulating the criterion, we discuss a minor complication arising
    from low-degree cyclotomic factors. Let
    \[
    \Phi=(X-1)(X+1)(X^2+X+1)(X^2+1)(X^2-X+1),
    \]
    the product of all cyclotomic polynomials of degree at most \(2\). The probability that \(f_n\) is divisible by a
    factor of \(\Phi\) is  not summable. For example,
    \[
    \Prob(X-1\mid f_n)\asymp n^{-1/2}.
    \]
    Thus, by the Borel--Cantelli lemma, such factors occur infinitely often
    almost surely.
    
    However, this does not obstruct \MMgeneric{}. Indeed, the roots of \(\Phi\) form a fixed finite subset of \(\Qbar\), and
    adding or removing finitely many elements does not affect \MMgeneric{}. We may therefore ignore these roots throughout the proof. The
    following notation will be used throughout the remainder of the section.
    
    \begin{definition}\label{def:neg_cyc}
    For each \(n\), write
    \[
    f_n=\Psi_n g_n,
    \qquad
    \Psi_n=\gcd(f_n,\Phi^n),
    \]
    so that \(g_n\) is coprime to \(\Phi\). We further write
    \[
    d_n=\deg g_n,
    \]
    and let \(L_n\) denote the splitting field of \(g_n\) over \(\Q\).
    \end{definition}
    
    \begin{proposition}\label{prop:BC}
    Assume that there exists a constant \(C\) such that
    \begin{gather}
    \sum_{n=1}^{\infty}
    \Prob(\deg \Psi_n>C)
    <\infty, \label{eq:BC1}
    \\
    \sum_{n=1}^{\infty}
    \Prob\Bigl(
    \Gal(L_n/\Q)\notin
    \{A_{d_n},S_{d_n}\}
    \Bigr)
    <\infty, \label{eq:BC2}
    \\
    \sum_{n=1}^{\infty}
    \Prob\Bigl(
    \exists\,m>n \text{ such that }
    [L_n\cap L_m:\Q]>2
    \Bigr)
    <\infty. \label{eq:BC3}
    \end{gather}
    
    Then \(S\) satisfies \MMgeneric{} almost surely.
    \end{proposition}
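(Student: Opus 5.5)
By Borel--Cantelli, each of \eqref{eq:BC1}, \eqref{eq:BC2}, \eqref{eq:BC3} implies that almost surely only finitely many $n$ are exceptional for the corresponding event. We therefore fix an outcome and an $n_0$ such that for all $n\ge n_0$ we have $\deg\Psi_n\le C$, $\Gal(L_n/\Q)\in\{A_{d_n},S_{d_n}\}$, and $[L_n\cap L_m:\Q]\le 2$ for every $m>n$. Write $\calS_0$ for the roots of $\Phi$ together with all roots of $f_1,\dots,f_{n_0-1}$. This is a finite set, and adding or removing finitely many points does not affect \MMgeneric{}, since each condition is stated up to finitely many exceptions. For $n\ge n_0$ the roots of $f_n$ outside $\calS_0$ are exactly the roots of $g_n$. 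It therefore suffices to verify \MMgeneric{} for $\calS'=\bigcup_{n\ge n_0}\{g_n=0\}$.

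\emph{Large Galois group and growing degree.} Since $\Gal(L_n/\Q)$ acts transitively on the roots of $g_n$ (as $A_{d_n}$ or $S_{d_n}$ in the natural action), $g_n$ is irreducible, at least once $d_n\ge 3$. Then every root $\alpha$ of $g_n$ satisfies $\deg\alpha=d_n\ge n-C$ and $L_\alpha=L_n$, so $G_\alpha\in\{A_{\deg\alpha},S_{\deg\alpha}\}$. Growing degree follows immediately: roots of degree $<N$ come only from $n<N+C$, hence from finitely many polynomials, hence there are finitely many of them. Small values of $d_n$ can occur only for $n\le C+2$, so we absorb them by enlarging $n_0$.

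\emph{Disjointness.} Take $\alpha,\beta\in\calS'$ with $\alpha$ a root of $g_n$ and $\beta$ a root of $g_m$. If $n=m$, then irreducibility of $g_n$ makes $\alpha$ and $\beta$ conjugate. If $n\ne m$, say $n<m$, then $[L_\alpha\cap L_\beta:\Q]=[L_n\cap L_m:\Q]\le 2$ by the choice of $n_0$. Thus every pair satisfies the disjointness condition, and \MMgeneric{} holds almost surely.

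The main subtlety is the irreducibility of $g_n$. Condition \eqref{eq:BC2} formally concerns only the Galois group of the splitting field, so one has to read it as the group acting on the $d_n$ roots of $g_n$ being $A_{d_n}$ or $S_{d_n}$. This is the convention for $G_\alpha$ in the paper, and under it transitivity gives irreducibility. We should also check that $g_n$ is squarefree, which follows from the same reading of the group action. Everything else is routine bookkeeping with Borel--Cantelli.
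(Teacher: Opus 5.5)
Your proposal is correct and follows the paper's proof: Borel--Cantelli on the three summable events, irreducibility of $g_n$ from its Galois group, $d_n\ge n-C$ giving growing degree, \eqref{eq:BC3} giving disjointness, and finitely many roots absorbed at the end. One small correction: adjoining a finite set does not preserve disjointness merely because the conditions are stated up to finitely many exceptions, since a single new point $\alpha_0$ creates infinitely many new pairs. What makes it work is that $[L_{\alpha_0}\cap L_\beta:\Q]\le 2$ as soon as $G_\beta\in\{A_d,S_d\}$ with $d\ge 5$ and $|G_\beta|>[L_{\alpha_0}:\Q]$, because the only normal subgroups of $A_d$ and $S_d$ of index $\le[L_{\alpha_0}:\Q]$ then have index at most $2$; this matters for your roots of $f_1,\dots,f_{n_0-1}$, and the paper also leaves it implicit.
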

    \begin{proof}
        By the first Borel--Cantelli lemma, almost surely only finitely many of the
        exceptional events in \eqref{eq:BC1}--\eqref{eq:BC3} occur. Fix such a
        realization.
        
        From \eqref{eq:BC1} we have \(d_n=n-\deg\Psi_n\ge n-C\) for all sufficiently
        large \(n\). Moreover, by \eqref{eq:BC2}, all but finitely many \(g_n\) have
        Galois group \(A_{d_n}\) or \(S_{d_n}\), and hence are irreducible. Thus
        every root of \(g_n\) has degree \(d_n\), and since \(d_n\to\infty\), the
        growing degree condition holds for the roots of the polynomials \(g_n\).
        
        The typical Galois group condition follows from \eqref{eq:BC2}, and the
        disjointness condition follows from \eqref{eq:BC3}. Therefore the set of
        roots of the polynomials \(g_n\) satisfies \MMgeneric{}.
        
        Finally, the roots of the factors \(\Psi_n\) belong to the fixed finite set
        of roots of \(\Phi\). Adding finitely many elements does not affect
        \MMgeneric{}, so \(S\) satisfies \MMgeneric{} as well.
    \end{proof}

    Throughout the remainder of this section, let
    \[
    p_1<\cdots<p_r,\qquad P=p_1\cdots p_r,
    \]
    where \(p_1,\ldots,p_r\) denote the first \(r\) prime numbers and \(r\) is a fixed sufficiently large integer. We allow the constant \(H_0\) to depend on \(r\). This dependence is only used in the proof of Lemma~\ref{lem:Delta_P_uniform_interval}. Since \(r\) is fixed throughout, \(H_0\) remains an absolute constant.
    
    We now verify the hypotheses of Proposition~\ref{prop:BC} for the random polynomial model of Theorem~\ref{thm:RandomMM}.

\subsection{Growing degree}\label{sec:irr}
    We show that the cyclotomic factor \(\Psi_n\) has bounded
    degree with summable exceptional probability, and that the complementary
    factor \(g_n\) is irreducible with summable exceptional probability.
    \begin{lemma}\label{lem:bounded-cyclotomic}
    We have
    \[
    \Prob(\deg \Psi_n>8)\ll n^{-3/2}.
    \]
    \end{lemma}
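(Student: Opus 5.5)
Since $\Phi$ is squarefree of degree $1+1+2+2+2=8$, the gcd $\Psi_n=\gcd(f_n,\Phi^n)$ has degree exceeding $8$ only if some irreducible factor $\varphi\in\{X-1,X+1,X^2+X+1,X^2+1,X^2-X+1\}$ divides $f_n$ to order at least $2$. By a union bound over these five factors it therefore suffices to show
\[
\Prob(\varphi^2\mid f_n)\ll n^{-3/2}\qquad\text{for each such }\varphi .
\]
Fix a root $\zeta$ of $\varphi$. The condition $\varphi^2\mid f_n$ is equivalent to $f_n(\zeta)=f_n'(\zeta)=0$. Write $f_n=X^n+\sum_{i<n}a_iX^i$, so that
\[
f_n(\zeta)=\zeta^n+\sum_{i<n} a_i\zeta^i,\qquad f_n'(\zeta)=n\zeta^{n-1}+\sum_{i<n} i\,a_i\zeta^{i-1}.
\]
These are values of sums of independent random vectors in a lattice. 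For $\zeta=\pm1$ this is the lattice $\Z^2$, via the pair $\bigl(\sum \pm a_i,\ \sum \pm i a_i\bigr)$. For $\zeta$ a primitive third, fourth or sixth root of unity, use real and imaginary parts in $\Z[\zeta]$, which gives a rank $\le 4$ lattice. The event in question is that this random lattice vector hits one prescribed point.

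The plan is to bound the point probability by Fourier inversion. Since the $a_i$ are independent,
\[
\Prob(V=v)\le\int_{\mathbb{T}^k}\prod_{i<n}\bigl|\phi_i(\langle\theta, w_i\rangle)\bigr|\,d\theta ,
\]
where $w_i$ is the deterministic vector attached to index $i$, for example $w_i=(\zeta^i,i\zeta^{i-1})$ written in real coordinates. Here $\phi_i$ is the characteristic function of $a_i$: uniform on $I$ for $1\le i\le n-1$, and uniform on $I\smallsetminus\{0\}$ for $i=0$. For the index $i=0$ I simply bound $|\phi_0|\le 1$, so the nonzero-constant-term restriction costs nothing. For the others, $|\phi_i(x)|\le \exp(-c\|x\|^2)$ with $c>0$ depending only on $H\ge2$, where $\|\cdot\|$ is distance to the nearest integer. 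For $\zeta=1$ this leads to
\[
\int_{\mathbb{T}^2}\exp\Bigl(-c\sum_i\|\theta_1+i\theta_2\|^2\Bigr)\,d\theta\ll n^{-1/2}\cdot n^{-3/2}=n^{-2}.
\]
The same integral, with signs or a twist by $\zeta^i$, handles the other roots. One can restrict to indices $i$ in a fixed residue class modulo $12$, so that $\zeta^i$ is constant, and then the integral reduces to the $\zeta=1$ case up to constants.

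The main obstacle is the estimate of this integral. Near the origin one has the Gaussian behaviour, with variances of order $n$ and $n^3$, which gives $n^{-2}$. Away from the origin one needs a uniform saving. If $\|\theta_2\|\gg 1/n$, then the values $i\theta_2$ mod $1$ equidistribute enough that a positive proportion of $i$ satisfy $\|\theta_1+i\theta_2\|\gg \min(1,n\|\theta_2\|)/n$. I would make this precise by splitting the range of $i$ into blocks of length $\asymp 1/\|\theta_2\|$. Each block contributes $\gg 1/\|\theta_2\|$ terms of size $\gg 1$, giving a total $\gg n$ and hence exponential decay. In the complementary range $\|\theta_2\|\ll 1/n$, the integral is the Gaussian one. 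Only $n^{-3/2}$ is needed, not the full $n^{-2}$, so there is slack. As a fallback one can instead condition on all coefficients but a few well-chosen ones and use the crude one-dimensional bound $\Prob(\sum i a_i=t\mid\ldots)\ll n^{-3/2}$, obtained from the coefficients $a_i$ with $i\asymp n$, together with a $n^{-1/2}$-bound for the other coordinate from the remaining coefficients.
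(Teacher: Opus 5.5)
Your proposal is correct, and it takes a somewhat different route from the paper.

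The paper's proof is essentially a citation. It keeps only the consequence $f_n'(\alpha)=0$ of some root $\alpha$ of $\Phi$ being a repeated root of $f_n$. It then quotes Peled--Sen--Zeitouni (Lemma~1.4), whose $O(n^{-3/2})$ bound is proved for coefficients in $\{-1,0,1\}$ and is asserted to carry over to uniform coefficients on any interval of length at least $2$. The exponent $3/2$ is the anticoncentration of $\sum_i i a_i$, whose standard deviation is $\asymp n^{3/2}$.

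You instead keep both conditions $f_n(\zeta)=f_n'(\zeta)=0$ and bound the resulting lattice point probability yourself by Fourier inversion, which gives the stronger bound $\ll n^{-2}$. Your fallback is precisely the derivative-only estimate that the paper cites. Your route gives a self-contained proof that explicitly handles two points the paper passes over in one sentence: general $H\ge 2$, through $|\phi(x)|\le\exp(-c\|x\|^2)$, and the differently distributed constant coefficient, through $|\phi_0|\le 1$. The paper's route buys brevity.

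When you write out the minor-arc analysis, tighten the following.
\begin{itemize}
\item Take the Gaussian region to be $\|\theta_2\|\le c_0/n$ with $c_0$ small and $\|\theta_1\|\le 1/4$, so that $\theta_1+i\theta_2$ does not wrap around. The part of $\|\theta_2\|\le c_0/n$ with $\|\theta_1\|>1/4$ is trivially exponentially small.
\item For $\|\theta_2\|>c_0/n$, count directly that only a proportion bounded away from $1$ of the indices $i$ satisfy $\|\theta_1+i\theta_2\|<\eta$, for some $\eta=\eta(c_0)>0$. Your block argument as stated needs at least one full block of length $1/\|\theta_2\|$ inside $[1,n]$, which fails when $\|\theta_2\|<1/n$.
\item For non-real $\zeta$, restricting to one residue class $r$ modulo the order $k$ of $\zeta$ works. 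The integrand then depends on $\theta\in\mathbb{T}^4$ only through two surjective characters, so pushing Haar measure forward reduces you to the two-dimensional integral with about $n/k$ terms.
\end{itemize}
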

    
    \begin{proof}
    If \(\deg\Psi_n>8=\deg\Phi\), then some root \(\alpha\) of \(\Phi\) is a
    multiple root of \(f_n\), and hence \(f_n'(\alpha)=0\). Therefore
    \[
    \Prob(\deg\Psi_n>8)
    \le
    \Prob(f_n'(\alpha)=0 \text{ for some root } \alpha \text{ of }\Phi).
    \]
    Peled, Sen, and Zeitouni \cite[Lemma~1.4]{PeledSenZeitouni2016} proved that
    the latter probability is \(O(n^{-3/2})\) in the case where the coefficients
    of \(f_n\) are in \(\{-1,0,1\}\). The proof in \emph{loc.\ cit.} works for
    any uniform measure on an interval whose length is at least \(2\).
    \end{proof}

    \begin{lemma}\label{lem:gn-irreducible}
    Assume $r\geq 24$. Then 
    \[
    \Prob(g_n \text{ is reducible})\ll n^{-3/2}.
    \]
    \end{lemma}

    \begin{proof}
    Since \(r\ge 24\), we have
    \[
        r\left(1-\frac{1+\log\log 2}{\log 2}\right)\ge \frac32,
    \]
    and the large-factor estimate in the proof of
    \cite[Theorem~1.1.4]{Bazin25} gives
    \[
        \rho_3:=
        \Prob\bigl(
        f_n \text{ has a factor of degree in }[n^{1/10},n/2]
        \bigr)
        \ll n^{-3/2}.
    \]
    
    By
    \cite[Proposition~1.2.2]{Bazin25},
    \[
    \rho_1:=
    \Prob\bigl(
    f_n \text{ has a non-cyclotomic factor of degree }\le n^{1/10}
    \bigr)
    \ll n^{-3/2}
    \]
    and by \cite[Lemma~45]{breuillard2019irreducibility},
    \[
    \rho_2:=
    \Prob\bigl(
    \Phi_d\mid f_n
    \text{ for some }3\le \varphi(d)\le n^{1/10}
    \bigr)
    \ll n^{-3/2}.
    \]
    
    If \(g_n\) is reducible, then, since \(g_n\) has no cyclotomic factor of
    degree at most \(2\), one of the three events defining
    \(\rho_1,\rho_2,\rho_3\) must occur. Hence
    \[
    \Prob(g_n \text{ is reducible})
    \le \rho_1+\rho_2+\rho_3
    \ll n^{-3/2},
    \]
    as claimed.
    \end{proof}
    
\subsection{Typical Galois group}\label{sec:TGG}
Recall that \(r\) is fixed and sufficiently large.
\begin{proposition}
\label{prop:lgg}
    We have
    \[
    \Prob\!\left(
    \Gal(L_n/\Q)\notin\{A_{d_n},S_{d_n}\}
    \right)
    \ll n^{-3/2}.
    \]
\end{proposition}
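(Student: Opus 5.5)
Given Lemma~\ref{lem:gn-irreducible}, we may assume $g_n$ is irreducible of degree $d_n\ge n-8$; by Lemma~\ref{lem:bounded-cyclotomic} this only costs $O(n^{-3/2})$. Then $G=\Gal(L_n/\Q)$ is transitive on the $d_n$ roots. The plan is to use the classical Jordan-type criterion: a transitive group containing a prime cycle of length $\ell$ with $d_n/2<\ell<d_n-2$ is primitive, and hence contains $A_{d_n}$. By Dedekind's theorem, such cycles come from the factorization of $f_n$ modulo small primes. This is where the fixed primes $p_1,\dots,p_r$ enter: reduction modulo $P$ maps $f_n$ to a polynomial whose residues modulo each $p_i$ are close to uniform. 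Uniformity over an interval of length $H\ge H_0(r)$ is presumably exactly what the announced Lemma~\ref{lem:Delta_P_uniform_interval} controls, via the total variation distance to the uniform distribution on monic polynomials mod $P$.

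\textbf{Local step.} For a uniform random monic polynomial of degree $n$ over $\F_p$, the probability that it has an irreducible factor of prime degree $\ell\in(n/2,n-10)$ and is squarefree is bounded below by some $c>0$ independent of $n$. Indeed, it is roughly $\sum_{\ell}1/\ell\approx \log 2$ minus lower-order terms. Moreover, the factor $\Psi_n$ has degree at most $8<n-\ell$, so the prime-degree factor lies in $g_n \bmod p$. Since $p\nmid\operatorname{disc}$ holds for squarefree reductions, Dedekind's theorem yields an $\ell$-cycle in $G$, and $\ell>d_n/2$, $\ell<d_n-2$. The reductions modulo distinct $p_i$ are (after Lemma~\ref{lem:Delta_P_uniform_interval}) essentially independent. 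So the probability that no $p_i$ produces such a cycle is at most $(1-c)^r+\Delta_P$, where $\Delta_P$ is the distance from uniform.

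\textbf{Main obstacle.} The bound $(1-c)^r$ is only a constant, not $n^{-3/2}$, so summability fails. This is the main difficulty. To fix it, I would instead use many more events that are independent as $n$ grows. One option is to replace the single prime cycle by a flexible criterion. By Łuczak--Pyber, a transitive subgroup of $S_d$ other than $A_d,S_d$ contains at most a $d^{-c}$ proportion of elements with a cycle of prime length greater than $d/2$; this is not directly usable either, though. The better route, as in Bary-Soroker--Kozma and Bazin, is to combine the $r$ primes with the structure of the degrees of irreducible factors. For a non-primitive transitive $G$ with block size $k$, each Frobenius cycle type must be compatible with a block system, meaning the factorization degrees mod $p_i$ are refined by a partition into blocks of equal size. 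For uniform polynomials, the probability that the degree multisets mod $p_1,\dots,p_r$ are all compatible with some common nontrivial $k\mid d_n$ decays like $n^{-r\delta}$ for some $\delta>0$ (the invariable-generation estimates of Eberhard--Ford--Green). Taking $r$ large, as already needed for Lemma~\ref{lem:gn-irreducible}, makes this $\ll n^{-3/2}$. Primitive groups not containing $A_{d_n}$ are handled similarly, since they have order at most $\exp(O(\sqrt d\log^2 d))$ and contain few elements with few cycles. Consequently, the probability that all $r$ reductions avoid $A_{d_n}$-forcing cycle types is $\ll n^{-r\delta}$.

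Summing the contributions gives $\ll n^{-3/2}$, provided $r$ is large and $H_0=H_0(r)$ makes $\Delta_P$ negligible. The delicate point is that $\Delta_P$ must itself be $\ll n^{-3/2}$, which should follow because only the low-order behaviour mod $P$ of many independent coefficients matters.
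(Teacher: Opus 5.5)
Your final architecture matches the paper's Proposition~\ref{prop:partition-events}: a per-prime saving $n^{-\delta}$ for the event that the factorization pattern mod $p_i$ fails to force $A_{d_n}$, multiplied over $r$ nearly independent primes, with $r$ then taken large. The proposal nevertheless has two genuine gaps.

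\textbf{Ramified primes.} The main gap is how you pass from factorization patterns to elements of $\Gal(L_n/\Q)$. You use Dedekind's theorem, identifying Frobenius cycle types with degree patterns of $f_{n,p_i}$. This requires $p_i$ to be unramified, e.g.\ $f_{n,p_i}$ squarefree. For a \emph{fixed} set of primes this fails with probability bounded below independently of $n$. Already the event that $(x-1)^2\mid f_{n,p_i}$ for every $i\in[r]$ has probability $P^{-2}+O(\Delta_P(n;2))=P^{-2}+O(e^{-n^{1/10}})$, by Lemma~\ref{lem:Delta_P_uniform_interval}. On that event none of your $r$ primes gives any information. So your argument cannot yield a summable failure probability, however large $r$ is.

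The missing idea is Lemma~\ref{lem:SmallMult} combined with Corollary~\ref{cor:merging}. With probability $1-O(n^{-c\log n})$, no irreducible factor of any $f_{n,p_i}$ has multiplicity $\ge\lceil\log n\rceil^3$. Then, even at ramified $p_i$, $G_{f_n}$ contains an element whose cycle type is a $(\log n)^3$-merging of $\rho_{p_i}$ \cite[Proposition~11.3]{BarySoroker-Koukoulopoulos-Kozma2023}. This forces you to use partition properties that survive such mergings. That is why the paper uses the specific events $\EE_1,\dots,\EE_5$ with \cite[Lemmas~12.8, 12.9]{BarySoroker-Koukoulopoulos-Kozma2023}, rather than random-permutation statistics such as Eberhard--Ford--Green. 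For example, $\EE_1$ makes the parts in $[n^{1-\alpha},n/\log n]$ occur with multiplicity one, so mergings cannot touch them.

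\textbf{Transfer to the interval model.} The total variation distance between $f_n \bmod P$ and a uniform monic polynomial mod $P$ is not small. For a general interval, e.g.\ when $P\nmid H$, each coefficient is slightly biased mod $P$, the biases accumulate over $n$ coefficients, and the distance tends to $1$. What Lemma~\ref{lem:Delta_P_uniform_interval} actually gives is equidistribution of $f_{n,P}$ only modulo tuples $\bfD$ with $\deg D_i\le n/2+n^{0.88}$.

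So each bad event must be covered by conditions $f_{n,P}\equiv 0\bmod \bfD$ with such low-degree $\bfD$, plus sieving by small irreducibles, with harmonic weight $\ll n^{-c}$ per prime. The relevant bad events include having a divisor of degree $nj/a$ (relevant to imprimitivity), and having too few parts in $[n^{1-\alpha},n/\log n]$ or in $U_4$. This covering is the content of Lemmas~\ref{lem:compare-uniform}, \ref{lem:div_deg_k} and \ref{lem:E1-controllable}--\ref{lem:E5-controllable}, and it is where the real work lies.

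Your sketch for primitive groups (Babai's order bound and ``few elements with few cycles'') also does not bound the relevant conjugation-invariant event for a varying group $G$. The paper handles this case through $\EE_4$ and \cite[Lemma~12.8]{BarySoroker-Koukoulopoulos-Kozma2023}.
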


The rest of  Section~\ref{sec:TGG} is devoted to the proof. 
We show that, with summable failure probability, one of the reductions \(g_{n,p_i}\) exhibits a partition pattern forcing every transitive subgroup containing the corresponding lift of Frobenius to be either \(A_{d_n}\) or \(S_{d_n}\). The proof follows the strategy of \cite{BarySoroker-Koukoulopoulos-Kozma2023}: describe the relevant partition events, show that they occur modulo at least one auxiliary prime, transfer them to actual cycle structures in $G_{g_n}$, and then apply a subgroup classification argument.
The key new ingredient is that we work simultaneously modulo
\(r\) auxiliary primes rather than one prime. This strengthens
the final error term obtained in
\cite{BarySoroker-Koukoulopoulos-Kozma2023}
from a power saving to a summable bound.

\subsubsection{Factorization Statistics}
We recall the relevant notation and results from \cite{BarySoroker-Koukoulopoulos-Kozma2023}, formulating them in a way suited to our application to Galois groups.

\medskip

\paragraph*{\scshape Comparison with the uniform model.}
The proof proceeds by comparing the factorization statistics of $f_n$ modulo the primes $p_1,\ldots, p_r$ with those of  independent uniformly distributed polynomials over
\(\F_{p_1},\ldots,\F_{p_r}\). To quantify this comparison, we use the
discrepancy measure introduced in
\cite[Eq.~2.6]{BarySoroker-Koukoulopoulos-Kozma2023}.

Let
\[
    \F_P[x]=\F_{p_1}[x]\times\cdots\times\F_{p_r}[x] 
\]
and 
\[
    f_{n,P}=(f_{n,p_1},\ldots,f_{n,p_r}).
\]
For \(\bfC=(C_1,\ldots,C_r)\) and \(\bfD=(D_1,\ldots,D_r)\) in $\F_P[x]$, define
\[
f_{n,P}\equiv \bfC \bmod \bfD
\qquad\Longleftrightarrow\qquad
f_{n,p_i}\equiv C_i \bmod D_i
\quad(i\in[r]).
\]
Similarly,
\[
    \bfC\mid \bfD \qquad \text{if} \qquad C_i\mid D_i \quad\text{for all }i\in[r]
\]
in which case $\bfD/\bfC=(D_i/C_i)_{i\in [r]}$. The greatest common divisor and the least common  multiple are also defined coordinate-wise:
\[
    (\bfC,\bfD) = (\gcd(C_1,D_1),\ldots, \gcd(C_r,D_r)),\qquad [\bfC,\bfD ] = \frac{\bfC\bfD}{(\bfC,\bfD)} .
\]
Let
\[
\|\bfD\|= \prod_{i\in [r]}\|D_i\| = \prod_{i\in[r]} p_i^{\deg D_i}.
\]
Then, for $Y\geq 1$, define
\begin{equation}
\label{eq:DeltaQ}
\Delta_P(n;Y)
=
\sum_{\substack{
\bfD=(D_1,\ldots,D_r)\\
\deg D_i\le Y\\
x\nmid D_i\ \forall i
}}
\max_{\bfC\bmod\bfD}
\left|
\Prob(f_{n,P}\equiv\bfC\bmod\bfD)
-\frac1{\|\bfD\|}
\right|.
\end{equation}
Given a collection $\calD\subseteq \F_P[x]$ we define the harmonic weight
\[
    \frak H(\calD) = \sum_{\bfD\in \calD} \frac{1}{\|\bfD\|}.
\]
Given collections
\[
\calI_{p_i}\subseteq \F_{p_i}[x]
\qquad (i\in[r]),
\]
consisting of monic irreducible polynomials, we write
\[
\calI=(\calI_{p_1},\ldots,\calI_{p_r})
\]
and define the associated sieve factor by
\[
\frak S(\calI)
=
\prod_{i=1}^{r}
\prod_{I_i\in\calI_{p_i}}
(1-\|I_i\|^{-1}).
\]

Writing $\bfM =\left(\prod_{I_1\in \calI_{p_1}}I_1,\ldots, \prod_{I_r\in \calI_{p_r}}I_r\right)$, we write
\[
(\bfD,\calI)
= (\bfD,\bfM), \qquad \bfD \mid \calI \Longleftrightarrow \bfD\mid \bfM.
\]
The following congruence event is instrumental in our approach:
\begin{equation}
    \label{eq:sieve_event}
    \Omega_{\bfD,\calI}= \{ f_{n,P}\equiv 0\bmod \bfD ,\ (f_{n,P}/\bfD,\calI)=1\}.
\end{equation}

For $m\ge1$, let
\[
\calI_p(m)
=
\{\,I\in\F_p[x]:
I \text{ monic irreducible},\ I\neq x,\ \deg I\le m\,\}.
\]
More generally, for
\(
\bfm=(m_1,\ldots,m_r),
\)
we write
\[
\calI_P(\bfm)
=
\bigl(
\calI_{p_1}(m_1),
\ldots,
\calI_{p_r}(m_r)
\bigr).
\]
For $D\in \F_p[x]$ of degree $n$, we let 
\begin{equation}
    \label{eq:defsmoothpart}
    D^{(m)} = \Big(D,\prod_{I\in \calI_p(m)}I^{n} \Big).
\end{equation}
So $D^{(m)}$ is the \emph{$m$-smooth part} of $D$ (for technical reasons, we deviate mildly from the canonical notation by excluding the powers of the prime $I=x$ from the smooth part).
We will use  the following version of the Prime Polynomial Theorem. Denote by $\pi_p(j)$ the number of irreducible monic polynomials of degree $j$ in $\F_p[x]$. Then, 
\begin{equation}
    \label{eq:PPT}
    \frac{1}{j} -\frac{2p^{-j/2}}{j} \leq \frac{\pi_p(j)}{p^{j}}\leq \frac{1}{j},
\end{equation}
\cite[Proposition 2.1]{Rosen2002}. We also use the  following analogs of Mertens' theorems. For any $p$ and $m$
\[
    \frak{H}_p(m):= \frak{H}(\calI_p(m)) = \sum_{k=1}^m \frac{1}{k} + O(1)=\log m+O(1),
\]
with absolute implied constant and 
\[
        \frak{S}_p(m) :=\frak{S}(\calI_p(m)) \leq \frac{2}{m+1},
\]
The first is immediate from \eqref{eq:PPT} and the second is an exercise, see for example \cite[Lemma~8.3]{BarySoroker-Koukoulopoulos-Kozma2023}.

\begin{definition}\label{def:controllable}
A \emph{covering datum} $(\calD_a,\calI_a)_{a\in A}$ consists of a finite set $A$ and for each $a\in A$ 
\begin{enumerate}
    \item 
    a collection $\calD_a$ of tuples
    \[
        \bfD=(D_1,\ldots, D_r)\in \F_P[x]
    \]
    with $x\nmid D_i$ for all $i\in [r]$,
    \item 
    $\calI_a = (\calI_{p_1,a},\ldots, \calI_{p_r,a})$ with $\calI_{p_i,a}$ a collection of irreducible monic polynomials.
\end{enumerate}
We say that an event $\AA$ is \emph{covered} by $(\calD_a,\calI_a)_{a\in A}$ if
\[
    \AA\subseteq \Union_{a\in A}\Union_{\bfD\in\calD_a} \Omega_{\bfD,\calI_a},
\]
where $\Omega_{\bfD,\calI_a}$ is as defined in  \eqref{eq:sieve_event}.

For $N,M,R,S,T\in \R_{\geq 0}$, we say that $\AA$ is \emph{$(N,M,R,S,T)$-controllable} if it is covered by $(\calD_a,\calI_a)_{a\in A}$ and 
\begin{enumerate}
    \item $\deg D_{i}\leq N$ for all $a\in A$, $i\in [r]$, and $\bfD\in \calD_{a}$,
    \item $\deg I_i\leq M$ for all $a\in A$, $i\in [r]$, and $I_i\in \calI_{p_i,a}$,
    \item if $R_a$ denotes the maximal number of representations of $\bfC\in \F_P[x]$ of the form $\bfC=\bfD\bfG$ with $\bfD\in\calD_a$, $\bfG\mid \calI_a$, and each $G_i$ has at most $6\log M_a$ irreducible factors, then $\sum_{a\in A}R_a \leq R$,
    \item $\sum_{a\in A} \frak{S}(\calI_a)\frak{H}(\calD_a)\leq S$,
    \item $|A|\leq T$.    
\end{enumerate}

    If $\calI_a=\emptyset$ for all $a\in A$, then we may take $M=0$ and $R=0$, so we simplify and say that $\AA$ is \emph{$(N,S,T)$-controllable} and that it is covered by $(\calD_{a})_{a\in A}$. 

    We will often consider events that only depend on $f_{n,p}$ for a single prime $p\mid P$. In this case, it suffices to find collections of $\calD_{a}\subseteq \F_{p}[x]$ and of monic irreducibles $\calI_a$. Indeed, we identify $\F_p[x]$ with $\ker(\F_P[x]\to \F_{P/p}[x])$, that is $D\in \calD_a$ is identified with the tuple whose other coordinates are $1$ and we set $\calI_{q,a}=\emptyset$ for $q\mid P/p$.
\end{definition}

\begin{lemma}\label{lem:union_controllable}
    Let $B$ be a finite set, and for each $b\in B$, let  $\AA_b$ be an $(N_b,M_b,R_b,S_b,T_{b})$-controllable event. Then 
    $\AA:=\Union_{b\in B}\AA_b$ is $(N,M,R,S,T)$-controllable, where 
\[
\begin{aligned}
N &= \max_{b\in B} N_b,
&\qquad
M &= \max_{b\in B} M_b,\\
R &= \sum_{b\in B} R_b,
&\qquad
S &= \sum_{b\in B} S_b,
&\qquad
T &= \sum_{b\in B} T_b.
\end{aligned}
\]
\end{lemma}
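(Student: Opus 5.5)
The plan is to build the covering datum for $\AA$ as a disjoint union of the covering data of the $\AA_b$, and then to check the five conditions of Definition~\ref{def:controllable} one at a time.

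For each $b\in B$, fix a covering datum $(\calD_a,\calI_a)_{a\in A_b}$ witnessing that $\AA_b$ is $(N_b,M_b,R_b,S_b,T_b)$-controllable. Put $A=\bigsqcup_{b\in B}A_b$, a disjoint union, and keep the same pairs $(\calD_a,\calI_a)$ for $a\in A$. Since $A$ is a finite union of finite sets, it is finite. The covering property follows directly:
\[
\AA=\Union_{b\in B}\AA_b\subseteq \Union_{b\in B}\Union_{a\in A_b}\Union_{\bfD\in\calD_a}\Omega_{\bfD,\calI_a}=\Union_{a\in A}\Union_{\bfD\in\calD_a}\Omega_{\bfD,\calI_a}.
\]

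For conditions (1) and (2): if $a\in A_b$, then every $\bfD\in\calD_a$ has $\deg D_i\le N_b\le N$, and every $I_i\in\calI_{p_i,a}$ has $\deg I_i\le M_b\le M$. For conditions (3), (4) and (5), note that the quantities $R_a$ and $\frak S(\calI_a)\frak H(\calD_a)$ are defined separately for each index $a$ and depend only on the pair $(\calD_a,\calI_a)$. In particular, $R_a$ is computed with the local bound $6\log M_a$, which is unchanged by the merging. Splitting each sum over $A$ into sums over the $A_b$ gives
\[
\sum_{a\in A}R_a=\sum_{b\in B}\sum_{a\in A_b}R_a\le\sum_b R_b=R,
\qquad
\sum_{a\in A}\frak S(\calI_a)\frak H(\calD_a)\le \sum_b S_b=S,
\]
and $|A|=\sum_b|A_b|\le\sum_bT_b=T$.

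The only point that needs care is whether $M_a$ in condition (3) means a per-index bound, namely the maximal degree of an element of $\calI_a$, or the global parameter $M$. If it were the global $M$, then enlarging $M$ from $M_b$ to $\max_b M_b$ would relax the constraint of at most $6\log M$ irreducible factors and could increase $R_a$. The subscript in Definition~\ref{def:controllable} shows that $M_a$ is attached to the index $a$, so the quantity is intrinsic to each pair and the merge leaves it unchanged. I will state this interpretation explicitly in the proof. The degenerate case $\calI_a=\emptyset$ (so $M=R=0$) is handled in exactly the same way.
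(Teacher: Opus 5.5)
Your proof is correct and is essentially the paper's: take the disjoint union $A=\coprod_{b\in B}A_b$ of the index sets, keep the same covering pairs $(\calD_a,\calI_a)$, and check that each of the five conditions carries over term by term. Your remark that $R_a$ is computed with the per-index bound $6\log M_a$, so merging leaves it unchanged, is a useful clarification that the paper leaves implicit.
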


\begin{proof}
    In the covering data for $\AA_b$, we may assume without loss of generality that the index sets $A_b$ are disjoint, so let $A= \coprod_{b\in B} A_b$ be their disjoint union. Then $(\calD_a,\calI_a)_{a\in A}$ is the requested covering datum for $\AA$.
\end{proof}

\begin{lemma}
    \label{lem:compare-uniform}
    Let $\AA$ be an $(N,M,R,S,T)$-controllable event with $M\geq 11$. Then
    \[
        \Prob(\AA) \leq 2^r S + R \Delta_P(n;N+6M\log M).
    \]
    If $M=0$, that is, if $\AA$ is $(N,S,T)$-controllable, then
    \[
        \Prob(\AA)\leq S + T\Delta_P(n;N).
    \]
\end{lemma}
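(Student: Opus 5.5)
The plan is a union bound over the covering datum, followed by a comparison with the uniform model. Fix a covering datum $(\calD_a,\calI_a)_{a\in A}$ witnessing controllability. Then
\[
\Prob(\AA)\le\sum_{a\in A}\sum_{\bfD\in\calD_a}\Prob(\Omega_{\bfD,\calI_a}).
\]

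\emph{The case $M=0$.} Here $\calI_a=\emptyset$, so $\Omega_{\bfD,\emptyset}=\{f_{n,P}\equiv 0\bmod \bfD\}$. By definition of $\Delta_P$,
\[
\Prob(\Omega_{\bfD,\emptyset})\le \|\bfD\|^{-1}+\max_{\bfC}\bigl|\Prob(f_{n,P}\equiv \bfC\bmod\bfD)-\|\bfD\|^{-1}\bigr|.
\]
Summing over $\bfD\in\calD_a$ gives the main term $\frak H(\calD_a)=\frak S(\emptyset)\frak H(\calD_a)$. The error terms are indexed by distinct $\bfD$ with $\deg D_i\le N$ and $x\nmid D_i$, so they sum to at most $\Delta_P(n;N)$. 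Summing over the at most $T$ indices $a$ yields $S+T\Delta_P(n;N)$.

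\emph{The general case.} The coprimality condition $(f_{n,P}/\bfD,\calI_a)=1$ has to be sieved out. For each coordinate $i$ I would use Brun's pure sieve with even truncation level $2k$, where $2k\le 6\log M$ (say $2k=2\lfloor 3\log M\rfloor$):
\[
\mathbf 1_{(h,M_i)=1}\le \sum_{G\mid (h,M_i),\ \omega(G)\le 2k}\mu(G).
\]
The right-hand side equals $\sum_{j\le 2k}(-1)^j\binom{\omega((h,M_i))}{j}\ge 0$. Since each of these upper bounds is nonnegative, their product over $i\in[r]$ bounds the product of the indicators. Writing $f_{n,P}=\bfD\bfh$, we have $\bfG\mid \bfh$ if and only if $f_{n,P}\equiv 0\bmod \bfD\bfG$. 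Hence
\[
\Prob(\Omega_{\bfD,\calI_a})\le \sum_{\bfG}\mu(\bfG)\,\Prob(f_{n,P}\equiv 0\bmod \bfD\bfG),
\]
where $\bfG\mid\calI_a$ and each $G_i$ has at most $2k$ prime factors. Next I replace each probability by $\|\bfD\bfG\|^{-1}$ plus a discrepancy.

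\emph{Error terms.} We have $\deg(D_iG_i)\le N+6M\log M$, and $x\nmid D_iG_i$ because $x\notin\calI_{p_i,a}$ for the sieve primes. A given tuple $\bfC=\bfD\bfG$ arises from at most $R_a$ pairs $(\bfD,\bfG)$, by condition (3) of controllability. So the total error is at most $\sum_a R_a\,\Delta_P(n;N+6M\log M)\le R\,\Delta_P(n;N+6M\log M)$.

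\emph{Main term.} The main term is $\frak H(\calD_a)\prod_i T_i$, where
\[
T_i=\sum_{j\le 2k}(-1)^j e_j\bigl(\{\|I\|^{-1}\}_{I\in\calI_{p_i,a}}\bigr)
\]
is the truncated expansion of $\prod_{I}(1-\|I\|^{-1})$. The standard Bonferroni tail estimate gives
\[
|T_i-\frak S_i|\le \frac{\frak H(\calI_{p_i,a})^{2k+1}}{(2k+1)!}.
\]
Now $\frak H(\calI_{p_i,a})\le \log M+O(1)$, and $\frak S_i\ge \frak S_{p_i}(M)\gg 1/M$, since the product over all irreducibles of degree at most $M$ satisfies a Mertens lower bound analogous to the stated upper bound. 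It therefore suffices to show
\[
\frac{(\log M+O(1))^{2k+1}}{(2k+1)!}\le \Bigl(\frac{e(\log M+O(1))}{2k+1}\Bigr)^{2k+1}\le \frac{c}{M}\le \frak S_i,
\]
which holds for $2k\approx 6\log M$ since $(e/6)^{6\log M}=M^{-6\log(6/e)}\le M^{-4}$. The hypothesis $M\ge 11$ makes the constants work. This gives $T_i\le 2\frak S_i$, and hence the main term is at most $2^r\frak S(\calI_a)\frak H(\calD_a)$. Summing over $a$ gives $2^rS$.

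The main obstacle is this calibration of the truncation level $2k$. It must be small enough that the moduli $\bfD\bfG$ stay within degree $N+6M\log M$ and respect the factor count $6\log M$ built into $R_a$. It must also be large enough that the Brun tail is dominated by $\frak S$, which requires a Mertens-type lower bound for the sieve factor. If the lower bound on $\frak S_i$ is not available in that form, I would instead use a Rankin-type tail bound with a weight, to get the required factor of $2$ per coordinate.
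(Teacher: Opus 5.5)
Your proof is correct and is essentially the paper's argument. The case $M=0$ is exactly the paper's union bound against $\Delta_P(n;N)$. For the general case the paper simply quotes the sieve bound \cite[Lemma~8.2]{BarySoroker-Koukoulopoulos-Kozma2023}. Your Brun sieve truncated at $2\lfloor 3\log M\rfloor$, with the Bonferroni tail and the Mertens lower bound $\mathfrak{S}_p(M)\gg 1/M$, is a self-contained proof of that bound, and it also explains the modulus bound $N+6M\log M$ and the threshold $M\ge 11$.

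The only bookkeeping concerns what the paper's definition leaves implicit. First, you need $x\notin\calI_{p_i,a}$, which holds for every $\calI_p(m)$ the paper uses. Second, the paper never defines $M_a$ in condition (3). If it denotes the sieve level of $\calI_a$ rather than $M$, take the truncation $2\lfloor 3\log M_a\rfloor$ separately for each $a$, so that the representation count $R_a$ applies.
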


\begin{proof}
    First assume that \(\AA\) is \((N,S,T)\)-controllable. Then
    \[
        \AA\subseteq
        \bigcup_{a\in A}\bigcup_{\bfD\in\calD_a}
        \{f_{n,P}\equiv0\bmod \bfD\}.
    \]
    Hence, by the union bound and the definition of \(\Delta_P(n;N)\), for each
    \(a\in A\),
    \[
    \begin{aligned}
    \sum_{\bfD\in\calD_a}
    \Prob(f_{n,P}\equiv0\bmod\bfD)
    &\le
    \sum_{\bfD\in\calD_a}\frac1{\|\bfD\|}
    +
    \sum_{\bfD\in\calD_a}
    \left|
    \Prob(f_{n,P}\equiv0\bmod\bfD)-\frac1{\|\bfD\|}
    \right|  \\
    &\le
    \frak H(\calD_a)+\Delta_P(n;N).
    \end{aligned}
    \]
    Summing over \(a\in A\), and using \(|A|\le T\), completes the proof. 
    
    In the general case, we apply the sieve bound \cite[Lemma 8.2]{BarySoroker-Koukoulopoulos-Kozma2023}, writing $X=N+6M\log M$ and using the notation as in Definition~\ref{def:controllable}, we get 
    \[
        \Prob(\AA) \leq \sum_{a\in A} (2^{r} \frak{S}(\calI_a)\frak{H}(\calD_a) + R_a \Delta_P(n;X))\leq 2^{r} S+R\Delta_P(n;X),
    \]
    as claimed. 
\end{proof}

Lemma~\ref{lem:compare-uniform} is only useful when \(\Delta_P(n;X)\) is sufficiently small. The following lemma shows that this is indeed the case, provided \(H_0\) is chosen sufficiently large. This is the only place in the proof where the assumption that \(H_0\) is sufficiently large is used.

\begin{lemma}\label{lem:Delta_P_uniform_interval}
There exists \(H_0=H_0(r)\) such that, if \(H\ge H_0\), then
\begin{equation}
    \label{eq:Delta}
    \Delta_P(n;X)
    \ll e^{-n^{1/10}},
\end{equation}
uniformly for \(X\le \frac n2+n^{0.88}\).
\end{lemma}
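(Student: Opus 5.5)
The idea is to reduce $\Delta_P(n;X)$, via the Chinese remainder theorem, to a bound on exponential sums over the random coefficient vector. By the Chinese remainder theorem applied coordinatewise, the tuple $\bfD=(D_1,\ldots,D_r)$ corresponds to a residue system modulo $D_1,\ldots,D_r$ over the different fields $\F_{p_i}$. Since $x\nmid D_i$, the map $f\mapsto (f \bmod D_i)_i$ restricted to monic degree-$n$ polynomials is an affine map in the coefficient vector $(a_0,\ldots,a_{n-1})\in I^n$. The condition on the free coefficient being nonzero changes probabilities by a factor $1+O(1/H)$ and a correction that we handle the same way. Writing $\Prob(f_{n,P}\equiv\bfC\bmod\bfD)-\|\bfD\|^{-1}$ via Fourier analysis on the finite group $G_{\bfD}=\prod_i \F_{p_i}[x]/(D_i)$, the discrepancy is bounded by $\|\bfD\|^{-1}\sum_{\psi\neq 1}|\widehat{\mu}(\psi)|$. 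Here $\widehat\mu(\psi)=\prod_{k<n}\phi_H(\psi(x^k))$ factorizes over the independent coefficients, where $\phi_H$ is the normalized Fourier transform of the uniform measure on $I$ pushed to the relevant additive characters. Each nontrivial additive character of $G_\bfD$ evaluated at $x^k$ gives a root of unity of order dividing $P$. The key point is that $|\phi_H(\theta)|\le \min(1, c/(H\|\theta\|))$ with $\|\theta\|\ge 1/P$ whenever the character is nontrivial on $x^k$. So for $H\ge H_0(r)$ with $H_0\gg P$, each such factor is at most some $\lambda<1/2$, or even at most $P^{-10}$ say.

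It then remains to count, for a nontrivial $\psi$, how many $k\in[0,n)$ have $\psi(x^k)$ nontrivial. Since $x$ is a unit in each $\F_{p_i}[x]/(D_i)$, the sequence $x^k$ is periodic. A nontrivial linear functional $\psi$ composed with the linear-recurrence sequence $k\mapsto x^k$ cannot vanish on $\deg D_i$ consecutive values of $k$: vanishing on a window of length $\deg D$ forces vanishing on the whole orbit, and hence on the spanning set $1,x,\ldots,x^{\deg D-1}$. Here we treat each prime component separately and take $\psi$ nontrivial on some component $i$ of degree $d_i\le X$. Thus, among $n$ consecutive $k$, at least $\lfloor n/d_i\rfloor$ are nontrivial. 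This alone is weak when $d_i$ is close to $n/2$ (it gives only about 2 good indices). So I would refine the argument: for a given window of length $d_i$, the number of $k$ with $\psi(x^k)=0$ is controlled by rank considerations, since the vector $(\psi(x^k))_{k<n}$ ranges over a code. Summing over all $\psi$, this gives $\sum_{\psi\ne1}|\widehat\mu(\psi)|\le \sum_{w}N(w)\lambda^{w}$, where $N(w)$ counts characters with exactly $w$ nonzero values. Now $(\psi(x^k))_{k<n}$ is a codeword of the linear code $\{(\ell(x^k))_k\}$ of dimension $\deg \bfD$ and length $n$, viewed over each field. This is the minimum-distance/weight-enumerator problem for such codes, which is the main obstacle. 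Because $X\le n/2+n^{0.88}$ one has $n-\deg D_i\ge n/2-n^{0.88}$, and I would use the trivial weight-distribution bound $N(w)\le \binom{n}{w}p^{\,w-(n-\deg D)}$-type estimates: a codeword vanishing on a set of $n-w$ positions containing $\deg D$ consecutive indices is zero. In the general case I would use the bound that the support must meet every window of length $\deg D$, forcing $w\ge n/\deg D$. Combined with $\lambda\le P^{-10}$, this suggests $\sum_\psi|\widehat\mu|\le \sum_w \binom{n}{w}P^{w}\lambda^{w}$ restricted to the admissible range. I expect this to give $\ll \|\bfD\|^{1/2}e^{-c\,n^{1/10}}$ after careful accounting; the weight $w$ must be shown to be $\gg n^{1/10}$ uniformly, and this is exactly where the hypothesis $X\le n/2+n^{0.88}$ must be used, presumably through an argument on the companion matrix showing long zero runs are impossible.

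Finally, sum over $\bfD$ with $\deg D_i\le X$. The number of such $\bfD$ of given norm is at most $\|\bfD\|$, so the $\|\bfD\|^{-1}$ prefactor leaves a total of $\sum_{\deg\le X}\|\bfD\|^{1/2}\cdot(\text{saving})$. Hence the saving per $\bfD$ must beat $P^{X}$-type growth. This is why I would instead aim for a per-$\psi$ bound $|\widehat\mu(\psi)|\le \lambda^{w(\psi)}$ with $\lambda\le P^{-3}$ and a weight lower bound $w(\psi)\gtrsim \deg\bfD$ up to $n^{0.88}$ losses. That yields $\sum_\bfD\max_\bfC|\cdot|\ll\sum_{\bfD}P^{-2\deg \bfD}e^{-n^{1/10}}\ll e^{-n^{1/10}}$, choosing $H_0$ large in terms of $P$, i.e.\ in terms of $r$.
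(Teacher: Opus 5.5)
Your Fourier set-up is sound, and its analytic input is what the paper actually checks. For $H\ge H_0(P)$ the uniform measure satisfies $|\widehat\mu(a/P)|\le P/(2H)$ for $a\not\equiv 0 \pmod P$. This verifies the hypothesis of \cite[Proposition~2.3]{BarySoroker-Koukoulopoulos-Kozma2023} (with $s=1$, $\gamma=1/2$), and the paper takes the bound on $\Delta_P(n;X)$ directly from that proposition. The gap is in the step that proposition supplies: turning coefficientwise decay into a bound summed over \emph{all} $\bfD$ with $\deg D_i\le X$. You leave this step as an expectation, and the mechanism you propose for it is false.

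There is no per-character lower bound $w(\psi)\gtrsim\deg\bfD$, and not even $w(\psi)\to\infty$. Take $D=x^d-1\in\F_p[x]$ with $(n+1)/2\le d\le n/2+n^{0.88}$, and let $\ell$ be the coefficient of $x^j$ in the basis $1,x,\dots,x^{d-1}$. Then $\ell(x^k\bmod D)$ equals $1$ if $k\equiv j\pmod d$ and $0$ otherwise. So for $n-d\le j\le d-1$ the sequence $(\ell(x^k))_{0\le k<n}$ has weight $1$; for $d=\lfloor n/2\rfloor$ one gets weight $2$ or $3$. Writing $\widehat\mu_I$ for the Fourier transform of the uniform measure on $I$, you get $|\widehat\mu(\psi)|=|\widehat\mu_I(1/p)|\ge 1/H$ when $p\nmid H$. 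Hence $\|\bfD\|^{-1}\sum_{\psi\ne 1}|\widehat\mu(\psi)|\ge p^{-d}/H$ for this single $\bfD$, far above the $P^{-2\deg\bfD}e^{-n^{1/10}}$ your final summation needs.

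The same example refutes the bound $N(w)\le\binom nw p^{\,w-(n-\deg D)}$. That bound is an MDS property, and these codes lack it: only $\deg D$ \emph{consecutive} zeros force the zero codeword.

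Nor can any estimate that is uniform over moduli of a fixed degree work. For $D=x^d-1$ with $n=2d$, the characters correspond to the $d$-periodic sequences, so $\sum_{\psi\neq 1}\lambda^{w(\psi)}=(1+(p-1)\lambda^2)^d-1$. Multiplying a worst-case bound of this size by the roughly $p^d$ moduli of degree $d$ uses up the factor $\|\bfD\|^{-1}$ entirely, and what remains is exponentially large in $n$.

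The missing idea is an averaging over the moduli. Low-weight characters do exist, but only for few $\bfD$. So you must weigh $\|\bfD\|^{-1}$ against the number of pairs $(\bfD,\psi)$ that generate a given low-weight recurrence sequence $(\psi(x^k))_{k<n}$, and use that small linear complexity forces large weight. Without such a counting argument, or the citation the paper relies on, your proposal does not reach the stated bound.
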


\begin{proof}
Let \(\mu\) be the uniform measure on \(I\). By the argument in the proof of
\cite[Theorem~1(a)]{BarySoroker-Koukoulopoulos-Kozma2023}, if \(H_0\) is
chosen sufficiently large in terms of \(P\), then
\[
    \max_{\substack{Q_1R=P\\ Q_1>1}}
    \max_{\ell\in\Z}
    \frac{1}{\sqrt{Q_1}}
    \sum_{k\in\Z/Q_1\Z}
    \left|
        \widehat\mu\!\left(\frac{k}{Q_1}+\frac{\ell}{R}\right)
    \right|
    <1.
\]
Thus the hypotheses of
\cite[Proposition~2.3]{BarySoroker-Koukoulopoulos-Kozma2023}
hold with \(s=1\) and \(\gamma=1/2\), and the bound
\eqref{eq:Delta} follows.
\end{proof}

\medskip
\paragraph*{\scshape Multiplicity bounds and mergings.}
We shall need two elementary notions concerning multiplicities modulo the
auxiliary primes. First, for a prime \(p\) and an integer \(\ell\ge 1\), consider the following  event on $f_n$ defined by 
\begin{equation}\label{eq:highmul}
\MM(p,\ell)
=
\left\{
\exists I\in\F_q[x] \text{ irreducible with } I^\ell\mid f_{n,p}
\right\}.
\end{equation}

\begin{lemma}\label{lem:SmallMult}
    Let $\ell = \lceil \log n\rceil$. Then, there exists $c>0$ such that 
    \[
        \Prob(\exists i\in [r] : f_{n,p_i}\in \MM(p_i,\ell^3))\ll n^{-c\log n}.
    \]
\end{lemma}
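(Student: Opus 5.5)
Fix one auxiliary prime $p=p_i$; at the end a union bound over the $r$ primes costs only a factor $r$. Write $k=\ell^3$. If $f_{n,p}\in\MM(p,k)$, then some monic irreducible $I\neq x$ satisfies $I^k\mid f_{n,p}$; the factor $I=x$ cannot occur, since $x^k\mid f_{n,p}$ with $k\ge1$ would force the constant term to vanish mod $p$, and this case is handled below. Set $j=\deg I$. If $jk\le N:=n/2$, the event is covered by the congruence events $f_{n,p}\equiv 0\bmod D$ with $D=I^k$, $\deg D\le n/2$. We show that this event is $(N,S,T)$-controllable with small $S$ and $T$, and then apply Lemma~\ref{lem:compare-uniform} together with Lemma~\ref{lem:Delta_P_uniform_interval}.

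\textbf{Small multiple factors.} Take the covering datum with a single index $a$ and
\[
\calD_a=\{I^k : I\in\F_p[x] \text{ monic irreducible},\ I\ne x,\ jk\le n/2\}.
\]
By \eqref{eq:PPT},
\[
\frak H(\calD_a)=\sum_{j\le n/(2k)}\pi_p(j)\,p^{-jk}\le \sum_{j\ge1}\frac{p^{j(1-k)}}{j}\ll p^{1-k}\le 2^{1-\ell^3}.
\]
Since $\ell^3\ge(\log n)^3$, this is $\ll n^{-c\log n}$. Here $T=1$ and the degree bound is $N=n/2$, which lies within the range of Lemma~\ref{lem:Delta_P_uniform_interval}. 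Hence
\[
\Prob\le S+T\Delta_P(n;n/2)\ll n^{-c\log n}+e^{-n^{1/10}}.
\]
To handle $I=x$ (or to avoid excluding it), I would use the $x\nmid D_i$ requirement differently: $x\mid f_{n,p}$ means $p$ divides the constant term, and $x^k\mid f_{n,p}$ means $p$ divides the $k$ lowest coefficients. Because the coefficients are independent and uniform on an interval of length $H\ge H_0\gg p$, this has probability at most $(2/p)^{k-1}\le(2/p_1)^{k-1}$; this is too weak when $p=2$. I would therefore instead bound it directly by $(1/p+1/H)^{k-1}\le (2/3)^{k-1}$ for $H$ large. This is superpolynomially small since $k=\ell^3$.

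\textbf{Large multiple factors.} This is the main obstacle: the case $jk>n/2$, where $D=I^k$ has degree exceeding the range in which $\Delta_P$ is controlled. Here $j>n/(2k)$, so $I$ has large degree. My first plan is to replace $I^k$ by $I^{k'}$, where $k'$ is the largest exponent with $jk'\le n/2$. Since $jk\le n$, we get $k'\ge\lfloor k/2\rfloor$, provided $j\le n/2$; and $j\le n/k$ certainly holds. The event $I^{k'}\mid f_{n,p}$ is contained in the previous covering with $k$ replaced by $k'\ge k/2-1$. Its harmonic weight is then
\[
\sum_{j>n/(2k)} \frac{p^{j(1-k')}}{j}\ll p^{-(k'-1)n/(2k)},
\]
which is exponentially small in $n$. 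Combining both regimes gives a total bound of $\ll n^{-c\log n}$ for each $p_i$. Summing over $i\in[r]$ yields the lemma.
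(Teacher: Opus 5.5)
Your proof is correct and follows essentially the paper's route. You treat $I=x$ directly via independence of the coefficients, each divisible by $p$ with probability bounded away from $1$. You cover the remaining event by powers of irreducibles of degree at most $n/2$, then invoke Lemma~\ref{lem:compare-uniform} and \eqref{eq:Delta}. The only difference is in keeping the degree at most $n/2$. The paper avoids your case split by lowering the exponent uniformly to $\ell^2$: $\deg I\le n/\ell^3$ gives $\deg I^{\ell^2}\le n/\ell\le n/2$, with harmonic weight $\ll p^{1-\ell^2}$. You instead truncate adaptively to $\lfloor n/(2j)\rfloor\ge\lfloor k/2\rfloor$.

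One small slip: your opening remark that $I=x$ ``cannot occur'' is false, since the constant term is nonzero in $\Z$ but may still be divisible by $p$. This is harmless, because you then handle that case separately.
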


\begin{proof}
    By the union bound, it suffices to prove that for each fixed $p=p_i$,
    \[
        \Prob(f_{n,p}\in \MM(p,\ell^3))\ll n^{-c\log n}.
    \]
    We first separate the possible contribution of the irreducible polynomial $x$.
    For every fixed prime $p$, there is a constant $\rho_p<1$ such that, for any
    interval of integers of length $H\geq 2$, the probability that a uniformly
    chosen element of the interval is divisible by $p$ is at most $\rho_p$. Removing
    $0$ from the interval, if this occurs for the constant term, can only decrease
    this probability. Hence the same $\rho_p$ applies to all relevant coefficients.
    By independence,
    \[
        \Prob(x^{\ell^3}\mid f_{n,p})
        \leq \rho_p^{\ell^3} = n^{-c_p(\log n)^2}.
    \]

    If there exists an irreducible $I\neq x$ with $I^{\ell^3}\mid f_{n,p}$, then $\deg I\leq n/\ell^3$, and in particular 
    \[
        \deg I^{\ell^2}\leq \frac{n}{\ell}\leq \frac n2
    \]
    for $n$ sufficiently large. Thus, if 
    \[
        \calD_p = \{ D^{\ell^2}: x\nmid D, \ 0<\deg D\leq n/\ell^{3}\},
    \]
    then 
    $\calD_p$ covers our event, so it is  $(n/\ell,S,1)$-controllable with 
    \[
        S\leq  \sum_{\deg D>0}\|D\|^{-\ell^2} \leq \sum_{j=1}^\infty p^{j(1-\ell^{2})}\ll p^{1-\ell^2}=n^{-c\log n}.
    \]
    Now Lemma~\ref{lem:compare-uniform} and \eqref{eq:Delta} completes the proof. 
\end{proof}

A consequence of Lemma~\ref{lem:SmallMult} is that the factorization patterns of the reductions \(f_{n,p_i}\) can be transferred to cycle structures occurring in the Galois group \(G_{f_n}\). 

More precisely, let \(\calP_n\) denote the set of partitions of \(n\), viewed as multisets.
For \(\sigma\in S_n\), let
\[
\rho_\sigma\in\calP_n
\]
be the cycle structure of \(\sigma\). 
Likewise, for a polynomial
\(h\in\F_p[X]\) of degree \(n\), let
\[
\rho_h\in\calP_n
\]
be the cycle structure of \(h\), namely the partition whose parts are the
degrees of the irreducible factors of \(h\), counted with multiplicity. 
Finally, the reduction of $f_n$ modulo the primes $p_1,\ldots, p_r$ defines an $r$-tuple of random partitions, namely  for each  \(p\), define
\begin{equation}
    \label{eq:randompartrho_i}
    \rho_p=\rho_{f_{n,p}}\in\calP_n.
\end{equation}

Recall that, roughly speaking, a \(y\)-merging of a partition \(\rho\) is
obtained from \(\rho\) by allowing groups of at most \(y\) parts of the same
size to merge into larger parts; see
\cite[Definition~11.2]{BarySoroker-Koukoulopoulos-Kozma2023} for the precise
definition.

\begin{corollary}\label{cor:merging}
With probability \(1-O(n^{-c\log n})\), 
for each $p\mid P$, there exists
\(\sigma\in G_{f_n}\) such that
\(\rho_{\sigma}\) is a \((\log n)^3\)-merging of \(\rho_{p}\).
\end{corollary}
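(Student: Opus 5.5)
The plan is to combine Lemma~\ref{lem:SmallMult} with the standard Dedekind--Frobenius argument at primes that may divide the discriminant. By Lemma~\ref{lem:SmallMult}, outside an event of probability $O(n^{-c\log n})$, for every $p\mid P$ no irreducible factor of $f_{n,p}$ occurs with multiplicity $\ell^3$ or more, where $\ell=\lceil\log n\rceil$. Hence every multiplicity is at most $(\log n)^3$ for large $n$. On this event we fix $p\mid P$ and construct $\sigma$. Since $f_n$ is monic with integer coefficients, we work in its splitting field $K$. We choose a prime $\mathfrak P$ of $K$ above $p$, with decomposition group $D$ and inertia group $I$; then $D/I\cong\Gal(\F_{\mathfrak P}/\F_p)$ is generated by Frobenius. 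We take $\sigma\in D$ to be any lift of Frobenius and compute its cycle type on the roots of $f_n$.

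The key step is to relate this cycle type to $\rho_p$. Reduction modulo $\mathfrak P$ sends the roots of $f_n$ in $K$ onto the roots of $f_{n,p}$ in $\overline{\F}_p$, counted with multiplicity, and this map is $D$-equivariant. Take a root $\bar\beta$ of an irreducible factor $J$ of $f_{n,p}$ with $\deg J=k$ and $J^e\parallel f_{n,p}$. It has exactly $e$ preimages, and $\sigma$ permutes the preimages of the Frobenius orbit of $\bar\beta$, a set of size $ke$. On this set, every $\sigma$-orbit has length divisible by $k$, because its image under reduction is a full Frobenius orbit of length $k$. Thus the cycles of $\sigma$ on these $ke$ roots have lengths $kj_1,\dots,kj_t$ with $\sum j_s=e$. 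In other words, the $e$ parts of size $k$ contributed by $J$ are grouped into merged parts, each made of at most $e\le(\log n)^3$ equal parts. Doing this for every irreducible factor $J$ shows that $\rho_\sigma$ is a $(\log n)^3$-merging of $\rho_p$, in the sense of \cite[Definition~11.2]{BarySoroker-Koukoulopoulos-Kozma2023}.

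The main obstacle is checking that the precise definition of merging in \emph{loc.\ cit.} matches this picture. That definition merges groups of at most $y$ equal parts into a single part. The construction above satisfies it as long as each merged group stays within the parts coming from a single $J$, which it does, and the group size is bounded by the multiplicity $e$. Repeated factors $x^e$ are also covered, since Lemma~\ref{lem:SmallMult} bounds them as well. If the definition in \emph{loc.\ cit.} uses a slightly different convention, for example merging up to $y$ parts across the same part size coming from different $J$, our construction is a special case and still qualifies. Finally, a union bound over the $r$ primes, with $r$ fixed, gives the stated probability $1-O(n^{-c\log n})$.
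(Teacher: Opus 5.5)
Your proof is correct and follows the paper's route: the paper also applies Lemma~\ref{lem:SmallMult} and then cites \cite[Proposition~11.3]{BarySoroker-Koukoulopoulos-Kozma2023}, whose content is exactly the decomposition-group and Frobenius-lift argument you carry out by hand (a lift of Frobenius acts on the $ke$ roots lying over $J$ with cycle lengths $kj_s$, where $\sum_s j_s=e$). One small nit, which the paper shares: outside $\MM(p,\lceil\log n\rceil^3)$ the multiplicities are only $<\lceil\log n\rceil^3$, and this need not be $\le(\log n)^3$, so strictly you get a $\lceil\log n\rceil^3$-merging (the form the paper uses later), or you can rerun Lemma~\ref{lem:SmallMult} with $\lfloor\log n\rfloor$ in place of $\lceil\log n\rceil$.
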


\begin{proof}
By Lemma~\ref{lem:SmallMult}, with probability \(1-O(n^{-c\log n})\), we have
\[
f_{n,p}\notin \MM(p,\ell^3), \qquad \ell=\lceil\log n\rceil,
\]
for every \(p\mid P\). The conclusion therefore follows from
\cite[Proposition~11.3]{BarySoroker-Koukoulopoulos-Kozma2023}.
\end{proof}

\medskip
\paragraph*{\scshape Auxiliary factorization events.}
We begin with an event controlling the size of the \(m\)-smooth part of
\(f_{n,p}\): For $p\mid P$, $m\in [n]$ and $u\geq 1$, let 
\[
    \KK_1(p,m,u) = \{ \deg f_{n,p}^{(m)} >um\}
\]
\begin{lemma}\label{lem:bigsmoothpart}
    For $C>1$ fixed, $n\geq 3$, $p\mid P$, $m\in [4C,n]$, and $u\geq2$, $\KK_1(p,m,u)$
    is $(um+m,S,1)$-controllable with $S\ll_C m e^{-Cu}$.
\end{lemma}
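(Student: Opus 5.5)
Since $\KK_1(p,m,u)$ depends only on $f_{n,p}$, it is enough to exhibit a single collection $\calD\subseteq\F_p[x]$ (so $|A|=1$ and $\calI=\emptyset$) covering the event. We then bound its harmonic weight.

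\emph{The covering.} Suppose $\deg f_{n,p}^{(m)}>um$. The smooth part $f_{n,p}^{(m)}$ is a product of irreducibles $I\neq x$, each of degree at most $m$. We multiply in its irreducible factors one at a time, with multiplicity. The degree increases by at most $m$ at each step, so at some point it first exceeds $um$. This produces a divisor $D\mid f_{n,p}^{(m)}$ that is $m$-smooth, satisfies $x\nmid D$, and has $um<\deg D\le um+m$. Accordingly we take
\[
\calD=\{D\in\F_p[x] \text{ monic}: x\nmid D,\ D \text{ is } m\text{-smooth},\ um<\deg D\le um+m\}.
\]
Every $f$ in $\KK_1(p,m,u)$ satisfies $f_{n,p}\equiv 0\bmod D$ for some $D\in\calD$. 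Hence the event is covered by $\calD$ with $N=um+m$ and $T=1$.

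\emph{The harmonic weight.} It remains to show $\frak H(\calD)\ll_C m e^{-Cu}$. We use Rankin's trick. For $\sigma>0$ we have
\[
\frak H(\calD)\le \sum_{\substack{D\ m\text{-smooth}\\ x\nmid D}}\|D\|^{-1}p^{\sigma(\deg D-um)}
=p^{-\sigma um}\prod_{I\in\calI_p(m)}\bigl(1-\|I\|^{-1}p^{\sigma\deg I}\bigr)^{-1}.
\]
Take $p^{\sigma}=e^{C/m}$, that is $\sigma\log p=C/m$. The prefactor then becomes $p^{-\sigma um}=e^{-Cu}$.

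For the Euler product we use \eqref{eq:PPT}, which gives $\pi_p(j)\le p^j/j$. The $j$-th factor contributes at most $\exp\bigl(\pi_p(j)\,p^{-j}e^{Cj/m}+O(\text{error})\bigr)$. The main term is therefore
\[
\exp\Bigl(\sum_{j\le m}\frac{e^{Cj/m}}{j}\Bigr)\le \exp\bigl(\log m+O_C(1)\bigr)\ll_C m.
\]
The estimate $\sum_{j\le m}(e^{Cj/m}-1)/j\le \sum_{j\le m}(e^{C}-1)(C/m)\ll_C 1$ follows from $(e^{Cx}-1)/x\le C e^{C}$ for $x\in(0,1]$, applied with $x=j/m$.

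The one delicate point is making sure every factor $1-\|I\|^{-1}e^{C\deg I/m}$ stays bounded away from $0$. The worst case is $\deg I=1$, which requires $e^{C/m}/p<1$. This is where the hypothesis $m\ge 4C$ is used: it gives $e^{C/m}\le e^{1/4}<2\le p$. The second-order terms $\sum_I \|I\|^{-2}e^{2C\deg I/m}$ are then uniformly bounded, so the product is $\ll_C m$. Combining, $\frak H(\calD)\ll_C m e^{-Cu}$, and so $\KK_1(p,m,u)$ is $(um+m,S,1)$-controllable with $S\ll_C me^{-Cu}$.
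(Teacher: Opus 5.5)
Your proof is correct and takes the same route as the paper. The paper uses exactly your covering collection $\calD$ of $m$-smooth monic $D$ with $x\nmid D$ and $um<\deg D\le um+m$, but simply cites the bound $\mathfrak{H}(\calD)\ll me^{-Cu}$ from the proof of \cite[Lemma~9.1]{BarySoroker-Koukoulopoulos-Kozma2023}; you instead verify that bound directly via Rankin's trick, using $m\ge 4C$ to keep the Euler product bounded uniformly in $p$.
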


\begin{proof}
    In the proof of \cite[Lemma~9.1]{BarySoroker-Koukoulopoulos-Kozma2023} it is shown that  
    \[
        \calD = \{D=D^{(m)},\ x\nmid D,\ um<\deg D\le um+m\}\subseteq \F_p[x]
    \]
    is a covering of $\KK_1(p,m,u)$ and that $\frak{H}(\calD)\ll me^{-Cu}$. This completes the proof.  
\end{proof}

A function \(\phi\colon \F_p[x]\smallsetminus\{0\}\to \R\) is called \emph{additive} if 
\[
\phi(D_1D_2)=\phi(D_1)+\phi(D_2)
\]
whenever \((D_1,D_2)=1\).

Two additive functions that will play a prominent role are
\[
\omega(D)
=
\sum_{\substack{I\mid D\\ I\ \mathrm{irreducible}}}1,
\qquad
\log_2\tau(D)
=
\frac{\log \tau(D)}{\log 2},
\]
where
$
\tau(D)
=
\sum_{G\mid D}1.
$
Both also satisfy \eqref{eq:phi-cond} below.

For an arithmetic function \(\phi\) and $m\geq 1$, define
\[
L_{\phi}(m)
=
\sum_{I\in\calI_p(m)}
\phi(I)\|I\|^{-1}.
\]

The next lemma concerns the values of an arithmetic function on the
$m$-smooth part of a polynomial. For $t\geq 0$, let
\begin{align*}
        \KK_{2}^{+}(p,\phi,t,m)
        &=
        \Bigl\{
        \phi(f_{n,p}^{(m)})
        \ge tL_{\phi}(m)
        \Bigr\},\\
        \KK_{2}^{-}(p,\phi,t,m)
        &=
        \Bigl\{
        \phi(f_{n,p}^{(m)})
        \le tL_{\phi}(m)
        \Bigr\},\\
        \KK_2(p,\phi,t,m) &=  \KK_2^{\pm_t}(p,\phi,t,m),
\end{align*}
where $\pm_t =\begin{cases}
    +, & t>1\\
    -,& t<1.
\end{cases}$
\begin{lemma}\label{lem:typical_addfun}
    Let \(C_1,C_2\geq 3\), let \(p\mid P\), let $t\in [0,C_2]$, and let \(\phi\) be an additive
    function on \(\F_p[x]\) such that
    \begin{equation}
        \label{eq:phi-cond}
        \phi(I)\in\{0,1\},
        \qquad
        0\leq \phi(I^\nu)\leq C_1\log \nu
    \end{equation}
    for every irreducible \(I\) and every \(\nu\geq2\). 
 
    Then there exists an absolute constant \(C_3\) such that, if
    \(C_3\le m\le n/\log n\), the event $\KK_2(p,\phi,t,m)$ is 
    \[
        (n/4+\log n,\ m/24,\ R,\ S,\ m+2)\text{-controllable},
    \]
    where
    \[
        R\ll_{C_2} n^{C_2+5},
    \]
    and
    \[
        S\ll_{C_1,C_2}
    mn^{-C/4}
    +\exp\bigl(-(t\log t-t+1)L_\phi(m)\bigr)
    +m^{-(t_0\log t_0-t_0+1)},
\]
with
\[
    t_0=
    \begin{cases}
        3,& 0\le t\le 1,\\
        \max(t,2),& 1\le t\le C_2.
    \end{cases}
\]
\end{lemma}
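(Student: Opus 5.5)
We reduce $\KK_2(p,\phi,t,m)$ to a sieve-type covering and bound the harmonic weights by Rankin's trick. Throughout we write $F=f_{n,p}^{(m)}$.

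\textbf{Step 1: bound the degree of $F$.} We intersect with the complement of the large-smooth-part event. Choose $u=\frac14\log n$, so that $um+m\le n/4+n/\log n$. By Lemma~\ref{lem:bigsmoothpart}, $\KK_1(p,m,u)$ is $(um+m,S_1,1)$-controllable with
\[
S_1\ll_C me^{-Cu}=mn^{-C/4}.
\]
This produces the first term in $S$ and contributes one index to $A$. It therefore suffices to cover $\KK_2(p,\phi,t,m)\smallsetminus \KK_1(p,m,u)$, on which $\deg F\le um$. Some slack in $N$ comes from replacing $n/\log n$ by $\log n$; I expect to absorb this by adjusting $u$ by a constant.

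\textbf{Step 2: split $F$ by factor degree and set up the covering.} Write $F=F_{\le}F_{>}$, where $F_{\le}$ collects the prime-power factors $I^\nu$ with $\deg I\le m/24$ and $F_{>}$ those with $m/24<\deg I\le m$. We index $a\in A$ by $j=\deg F_{>}\in\{0,\dots,m\}$, giving at most $m+1$ further indices, so $|A|\le m+2$ in total.

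For each $a$, let $\calD_a$ be the set of $m$-smooth $D$ with $x\nmid D$ and $\deg D\le um$ that arise as
\[
D=F_{\le}\cdot F_{>}
\qquad\text{with}\qquad
\phi(D)\ge tL_\phi(m)\ \text{(resp.\ }\le\text{)}.
\]
Let $\calI_a$ consist of the irreducibles $I\ne x$ with $\deg I\le m/24$. The event is covered by $\Omega_{D,\calI_a}$ once we require that $f_{n,p}/D$ has no further small factors; any small prime power is fully recorded in $D$, so this holds.

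Because only primes of degree $\le m/24$ are sieved, this gives $M=m/24$ and $N\le um+m$. The representation count $R_a$ counts factorisations $C=DG$ with $G$ a product of at most $6\log M$ primes of degree $\le m/24$. This is at most the number of such divisors of $C$. That number is at most $\deg C^{6\log m}$, and I expect it to be bounded by $n^{C_2+4}$ once one also uses the constraint $\phi(D)\approx tL_\phi(m)\le C_2\log m$. Summing over the $m+2$ indices gives $R\ll_{C_2} n^{C_2+5}$.

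\textbf{Step 3: bound $\frak S(\calI_a)\frak H(\calD_a)$ by Rankin's trick.} This is where the large-deviation terms come from, and it is the main obstacle. The sieve factor satisfies
\[
\frak S(\calI_a)\le \frac{2}{m/24+1}\ll \frac1m .
\]

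\emph{Small primes.} For the weight of the $F_{\le}$ part, insert $y^{\phi(D)-tL_\phi(m)}$ with $y=t$ (taking $y<1$ in the lower-tail case). Euler-factorising over primes of degree $\le m/24$ gives roughly
\[
\exp\bigl((y-1)L_\phi(m)+O(1)\bigr)\,y^{-tL_\phi(m)}.
\]
Condition~\eqref{eq:phi-cond} makes the prime-power terms harmless, since they contribute $\sum_\nu \nu^{C_1\log y}\|I\|^{-\nu}$. The prime-power tail is also bounded via~\eqref{eq:phi-cond}. The difference between $L_\phi(m)$ and $L_\phi(m/24)$ is $O(1)$ by Mertens. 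With $y=t$ this yields $\exp\bigl(-(t\log t-t+1)L_\phi(m)\bigr)$.

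\emph{Large primes.} The factor $F_{>}$ has at most $24u$ prime factors, each contributing $\phi\le1$. Its harmonic weight over degree $j$ is $\ll 1/j$-type per prime, and the total is $O(1)$; this compensates the $1/m$ sieve loss after summing over $j$. The delicate part is the case where $\phi(F)$ is driven by the large primes rather than the small ones. Here I would apply Rankin a second time with the fixed parameter $t_0$: the large-prime part has $L$-weight $\log 24+O(1)$, but the number of large primes needed is $\gg \log m$. This forces a loss of $m^{-(t_0\log t_0-t_0+1)}$, with $t_0=3$ in the lower-tail case to keep the exponent positive.

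Getting these exponents exactly, rather than with $\epsilon$-losses, is where I expect most of the work.
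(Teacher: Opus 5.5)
\textbf{The gap is in the representation multiplicity $R$.}

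Your Steps 1--2 reproduce the paper's main covering. You remove $\KK_1(p,m,\tfrac14\log n)$, take $D=f_{n,p}^{(m)}$ itself, and sieve by $\calI_p(m/24)$, which is automatically satisfied. Rankin with $y=t$ then gives $\frak S\,\frak H\ll\exp(-(t\log t-t+1)L_\phi(m))$. Two bookkeeping corrections. First, the factor cancelling $\frak S\ll 1/m$ is the $\asymp m$ coming from $\prod_{\deg I\le m}(1-\|I\|^{-1})^{-1}$ in the full Euler product. Second, the primes of degree in $(m/24,m]$ change that product only by a constant factor. So your ``second Rankin for large primes'' is not a real mechanism. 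This covering produces no $m^{-(t_0\log t_0-t_0+1)}$ term, and $t_0$ plays no role in it.

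That missing term is a symptom of the gap. Given $C=DG$, the admissible $G$ are divisors of $C$ with at most $6\log M$ small prime factors, so their number is governed by $\omega(C)$. Your bound $(\deg C)^{6\log m}$ is $n^{O(\log n)}$, not $n^{C_2+5}$. The constraint on $\phi(D)$ cannot repair this:
\begin{itemize}
\item in the upper tail it is only a lower bound on $\phi(D)$;
\item in general $\phi$ may vanish on almost all primes (e.g.\ the $\phi$ used for $\EE_3$ is supported on degrees in $[n^{1-\alpha},m]$), so $\phi(D)$ says nothing about $\omega(D)$. For $D$ of degree $\le n/4$, $\omega(D)$ can be of size $\asymp n/\log n$.
\end{itemize}
A quasi-polynomial $R$ would in fact be harmless downstream, since $\Delta_P\ll e^{-n^{1/10}}$. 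It does not, however, prove the lemma as stated.

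\textbf{The missing idea.} Also excise $\widetilde\KK=\{\omega(f_{n,p}^{(m)})\ge t_0L_\omega(m)\}\smallsetminus\KK_1$, and impose $\omega(B)<t_0L_\omega(m)$ on the elements of $\calD$. Then $\omega(C)\ll_{t_0}\log m$, and the number of admissible $G$ is $m^{O_{t_0}(1)}$. (The paper imports $R\ll m^{t_0+4}$ from \cite[Lemma~9.2]{BarySoroker-Koukoulopoulos-Kozma2023}.)

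The excised event cannot be covered by its full smooth part either, for the same reason. Instead one uses a first-crossing decomposition. For $j\le m$, the event $\omega(f_{n,p}^{(j)})\ge t_0L_\omega(m)>\omega(f_{n,p}^{(j-1)})$ is covered by divisors $BJ$ where:
\begin{itemize}
\item $B$ is $j$-smooth, contains $f_{n,p}^{(j-1)}$, and satisfies $t_0L_\omega(m)-1\le\omega(B)<t_0L_\omega(m)$;
\item $J$ is irreducible of degree $j$;
\item the sieve is by primes of degree up to roughly $(j-1)/24$.
\end{itemize}
Here $\omega$ is pinned, so $R$ is polynomial. Rankin with $y=t_0$ gives $\frak S\,\frak H\ll j^{-1}\cdot j^{-1}\cdot j^{t_0}m^{-t_0\log t_0}$. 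Summing over $j\le m$ gives exactly the third term $m^{-(t_0\log t_0-t_0+1)}$.

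These $m$ events are the extra indices in $T=m+2$, and this is the only place $t_0$ enters; $t_0\le C_2$ is what keeps $R\ll n^{C_2+5}$. Your indexing by $\deg F_{>}$, which can in any case exceed $m$, is not needed.
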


\begin{proof}
    We first consider the special case \(\phi=\omega\) and
    \(t_0\in[2,C_2]\). For this case, we use the decomposition in
    \cite[Lemma~9.2]{BarySoroker-Koukoulopoulos-Kozma2023}, with
    \(\theta=1/2\), \(A_p\) playing the role of \(f_{n,p}\), and the
    additive function \(f\) in \emph{loc.\ cit.} playing the role of
    \(\omega\).

    Let
    \[
        \KK_1=\KK_1\left(p,m,\frac{\log n}{4}\right).
    \]
    For \(j\in[m]\), set
    \[
        \KK_{2,j,t_0}
        =
        \left\{
            \omega(f_{n,p}^{(j)})
            \ge t_0L_{\omega}(m)
            >
            \omega(f_{n,p}^{(j-1)})
        \right\}\smallsetminus \KK_1 .
    \]
    Between Equations (9.6) and (9.7) of \emph{loc.\ cit.}, it is shown
    that \(\KK_{2,j,t_0}\) is covered by
    \((\mathcal D_j,\mathcal I_p(y_j))\), where
    \[
        y_j=\max\left(11,\left\lfloor\frac{j-1}{24}\right\rfloor\right)
    \]
    and
    \[
        \mathcal D_j=
        \left\{
        BJ:
        \begin{array}{l}
        B=B^{(j)},\ J \text{ irreducible},\ \deg J=j,\ \deg(BJ)\le n/4,\\
        t_0L_\omega(m)-1\le \omega(B)<t_0L_\omega(m)
        \end{array}
        \right\}.
    \]
    Moreover, the displayed equation immediately preceding Equation~(9.8)
    in \emph{loc.\ cit.} gives
    \[
        R_j\ll m^{t_0+4}\le n^{C_2+5},
    \]
    where \(R_j\) is the representation multiplicity appearing in
    Definition~\ref{def:controllable}. The paragraph following
    Equation~(9.8) gives
    \[
        \sum_{j=1}^{m}
        \mathfrak S(\mathcal I_p(y_j))\mathfrak H(\mathcal D_j)
        \ll_{C_2} m^{-(t_0\log t_0-t_0+1)}.
    \]
    Hence
    \[
        \widetilde{\KK}_{2,t_0}
        :=
        \KK_2^{+}(p,\omega,t_0,m)\smallsetminus \KK_1
        \subseteq \bigcup_{j\le m}\KK_{2,j,t_0}
    \]
    is
    \[
        \left(
            \frac n4,\,
            \frac{m}{24},\,
            O_{C_2}(n^{C_2+5}),\,
            O_{C_2}\bigl(m^{-(t_0\log t_0-t_0+1)}\bigr),\,
            m
        \right)\text{-controllable}.
    \]

    We now return to a general additive function \(\phi\) satisfying
    \eqref{eq:phi-cond}. The final part of the proof of
    \cite[Lemma~9.2]{BarySoroker-Koukoulopoulos-Kozma2023} shows that, for
    any set \(X\subseteq\mathbb R_{\ge0}\), the event
    \[
        \{\phi(f_{n,p}^{(m)})\in X\}
        \setminus
        \bigl(\KK_1\cup\widetilde{\KK}_{2,t_0}\bigr)
    \]
    is covered by \((\mathcal D,\mathcal I_p(y_m))\), where
    \[
        \mathcal D
        =
        \left\{
            B:
            B=B^{(m)},\ \deg B\le n/4,\ \phi(B)\in X
        \right\}.
    \]
    The corresponding representation multiplicity satisfies
    \[
        R\le n^{t_0+4}\le n^{C_2+5}.
    \]

    For the upper-tail event \(\KK_2^+(p,\phi,t,m)\), where
    \(t\in[1,C_2]\), we take
    \[
        X=[tL_\phi(m),\infty),
        \qquad
        t_0=\max(t,2).
    \]
    For the lower-tail event \(\KK_2^-(p,\phi,t,m)\), where
    \(t\in[0,1]\), we take
    \[
        X=[0,tL_\phi(m)],
        \qquad
        t_0=3.
    \]
    In both cases, the same proof in \emph{loc.\ cit.} gives
    \[
        \mathfrak S(\mathcal I_p(y_m))\mathfrak H(\mathcal D)
        \ll_{C_1,C_2}
        \exp\bigl(-(t\log t-t+1)L_\phi(m)\bigr).
    \]
    Therefore the corresponding main event is
    \[
        \left(
            \frac n4,\,
            \frac{m}{24},\,
            O_{C_2}(n^{C_2+5}),\,
            O_{C_1,C_2}\!\left(
                \exp\bigl(-(t\log t-t+1)L_\phi(m)\bigr)
            \right),\,
            1
        \right)\text{-controllable}.
    \]

    Finally, by Lemma~\ref{lem:bigsmoothpart}, the event \(\KK_1\) is
    \[
        \left(
            \frac n4+\log n,\,
            S_1,\,
            1
        \right)\text{-controllable},
        \qquad
        S_1\ll_C mn^{-C/4}.
    \]
    Applying Lemma~\ref{lem:union_controllable} to \(\KK_1\),
    \(\widetilde{\KK}_{2,t_0}\), and the main event associated with \(X\),
    we conclude that both \(\KK_2^+(p,\phi,t,m)\) and
    \(\KK_2^-(p,\phi,t,m)\) are
    \[
        (n/4+\log n,\ \frac{m}{24},\ R,\ S,\ m+2)\text{-controllable},
    \]
    where
    \[
        R\ll_{C_2} n^{C_2+5}
    \]
    and
    \[
        S\ll_{C_1,C_2}
        mn^{-C/4}
        +\exp\bigl(-(t\log t-t+1)L_\phi(m)\bigr)
        +m^{-(t_0\log t_0-t_0+1)}.
    \]
    This proves the lemma.
\end{proof}

Our final auxiliary event, and the main one for our purposes, concerns
the existence of a factor of prescribed degree. For $k\le n$, let
\[
    \KK_3(p,k)
    =
    \{\exists D\mid f_{n,p}:\ \deg D=k\}.
\]
Since a divisor of degree $k$ corresponds to a complementary divisor of
degree $n-k$, we have
\[
    \KK_3(p,k)=\KK_3(p,n-k).
\]
Accordingly, it suffices to consider $k\le n/2$.
\begin{lemma}\label{lem:div_deg_k}
Let \(p\mid P\), \(1\le k\le n/2\), and let
\[
    \MM=\MM(p,\lceil\log n\rceil^3)
\]
be as in \eqref{eq:highmul}. Let \(\epsilon,\lambda\in(0,1/4)\).
Then, for \(k\) sufficiently large in terms of \(\epsilon,\lambda\), there exists $c_\epsilon>0$ depending only on $\epsilon$ such that the event
\[
    \KK_3(p,k)\smallsetminus\MM
\]
is \((N,M,R,S,T)\)-controllable, where
\[
\begin{array}{l@{\hspace{5em}}l}
    N=\dfrac n2+\lambda\epsilon k^\lambda,
    &
    M=\dfrac{n}{24\log n},
    
    \\[1ex]
    R,T\ll_{\epsilon,\lambda} n^{O(1)},
    &
    S\ll_{\epsilon,\lambda} k^{-\lambda c_\epsilon/10}.
\end{array}
\]
\end{lemma}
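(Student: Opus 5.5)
The plan is to adapt the ``divisor of prescribed degree'' argument of \cite{BarySoroker-Koukoulopoulos-Kozma2023}, a function-field analogue of Ford's bound on integers with a divisor in a given range, to the language of controllable events. Every exceptional event will be assembled from $\KK_1$, $\KK_2$ and explicit sieve coverings, and combined with Lemma~\ref{lem:union_controllable}. Throughout we work outside $\MM=\MM(p,\lceil\log n\rceil^3)$. This removal is what keeps the representation multiplicities $R$, and the number $T$ of covering pieces, polynomial in $n$: a polynomial with no irreducible factor of multiplicity at least $(\log n)^3$ has only $n^{O(1)}$ factorizations of the relevant shapes.

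Set the smoothness parameter $y=\epsilon k^\lambda$. For $f_{n,p}\in\KK_3(p,k)$, write $f_{n,p}=AB$ with $A=f_{n,p}^{(y)}$ the $y$-smooth part, excluding $x$. Then $B$ has all irreducible factors of degree $>y$, apart from powers of $x$, which we absorb harmlessly since $x\nmid D_i$. A divisor $D$ of degree $k$ then factors as $D=D_sD_r$ with $D_s\mid A$ and $D_r\mid B$. We split $\KK_3(p,k)\smallsetminus\MM$ into three cases.

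\emph{(i) Large smooth part.} Suppose $\deg A>uy$ with $u=\lambda\cdot$(small constant) chosen so that $uy+y\le\lambda\epsilon k^\lambda$. This is $\KK_1(p,y,u)$, which by Lemma~\ref{lem:bigsmoothpart} is $(uy+y,S,1)$-controllable. To get $S\ll y e^{-Cu}$ to be a power of $k$, I would instead take $u\asymp\log k$ and accept a slightly larger $N$-contribution; reconciling this with the stated $N=n/2+\lambda\epsilon k^\lambda$ is one point to check.

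\emph{(ii) Too many divisors.} Suppose $\log_2\tau(A)\ge t\,L_{\log_2\tau}(y)$ for some $t>1$ slightly above $1$, depending on $\epsilon$. Both $\omega$ and $\log_2\tau$ satisfy \eqref{eq:phi-cond}, and $L_{\log_2\tau}(y)=\log y+O(1)$ by the Mertens analogue. So Lemma~\ref{lem:typical_addfun} applies with $m=y$, giving sieve level $y/24\le M$ and $S\ll y^{-(t\log t-t+1)}$. This is a power of $k^\lambda$, and it is the source of the constant $c_\epsilon$.

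\emph{(iii) The typical case.} Here $\deg A\le uy$ and $\tau(A)\le(\log y)^{t\log 2\cdot O(1)}\approx y^{o(1)}$. The rough divisor $D_r\mid B$ must have degree $k-\deg D_s$, which takes at most $\tau(A)$ values in $[k-uy,k]$. We cover by tuples $\bfD=A\,D_r$, where $A$ is $y$-smooth of degree $\le uy$ and $D_r$ is a product of irreducibles of degree $>y$ with $\deg D_r=k-\deg D_s$ for some $D_s\mid A$. We sieve the cofactor by $\calI_p(y)$, so that $A$ really is the full smooth part. Then $\deg\bfD\le k+uy\le n/2+\lambda\epsilon k^\lambda$, matching $N$. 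For the bound on $S$, the sieve factor contributes $\frak S_p(y)\le 2/(y+1)$. The harmonic weight of $y$-rough polynomials of a fixed degree $j$ is $\asymp 1/y$ when $j\ge y$, by the prime polynomial theorem \eqref{eq:PPT} and a Buchstab-type recursion. Summing over $A$ then gives roughly
\[
S\ll\sum_{A\ y\text{-smooth}}\frac{\tau(A)}{\|A\|}\cdot\frac{1}{y}\cdot\frac{1}{y}\cdot y\ \approx\ \frac{(\log y)^{O(1)}}{y}.
\]
This is a genuine power of $k^{-\lambda}$, by the case (ii) restriction on $\tau(A)$. The representation multiplicity of $\bfC=\bfD\bfG$ with $\bfG\mid\calI$ is polynomial in $n$ outside $\MM$. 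The sub-case where $D_r=1$, i.e.\ $D\mid A$, is impossible once $k>uy$.

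Collecting (i)--(iii) with Lemma~\ref{lem:union_controllable} should give the stated parameters, with $S\ll k^{-\lambda c_\epsilon/10}$ after absorbing logarithmic losses into the constant $1/10$. The main obstacle I expect is case (iii). There one must bound the harmonic weight of rough polynomials of degree lying in a set of $\tau(A)$ prescribed values uniformly in $j\le n/2$. One must also verify that the smooth cutoff $\lambda\epsilon k^\lambda$, the sieve level $M=n/(24\log n)$, and the requirement that $k$ be large in terms of $\epsilon,\lambda$ are mutually consistent. I would follow the corresponding computation in \cite[Section~10]{BarySoroker-Koukoulopoulos-Kozma2023} essentially verbatim, tracking only the extra bookkeeping of $(N,M,R,S,T)$.
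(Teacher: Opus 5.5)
Your architecture is the paper's, which in turn imports the proof of \cite[Lemma~9.4]{BarySoroker-Koukoulopoulos-Kozma2023}. You discard $\KK_1$ (large smooth part) and $\KK_2^{+}$ for $\log_2\tau$ (too many divisors). You then cover the typical case by $[B,D]$, the full smooth part times the rough part of the degree-$k$ divisor, sieved by $\calI_p(m)$. A single smoothness level, as you use, is all this covering needs. The paper's union over every $m\in[k^{\lambda/2},n/\log n]$ in $\UU_k$ is not required, and summing the bounds $m^{-c_\epsilon}$ over all such $m$ would not give a small total.

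However, three steps are wrong or missing as written.

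First, case (iii) rests on a false estimate. Off $\KK_2^{+}$ you only know $\log_2\tau(A)<t\,L_{\log_2\tau}(y)=t\log y+O(1)$, i.e.\ $\tau(A)\ll y^{t\log 2}$, not $(\log y)^{O(1)}$. The typical case therefore gives $\mathfrak S\,\mathfrak H\ll y^{-(1-t\log 2)}$, which is the paper's $k^{-\lambda(1-\log 2-\epsilon)}\log n$, not $(\log y)^{O(1)}/y$. It is a power saving only because $t\log 2<1$, and this is where the upper constraint on $t$ comes from. Your final bound survives, since the dominant term is $y^{-c_\epsilon}$ from case (ii), but (iii) must be redone with the correct divisor bound.

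Second, the $N$-conflict you flag must be resolved, not deferred. Lemma~\ref{lem:bigsmoothpart} requires $u\ge 2$ and gives $S\ll y e^{-Cu}$, so $u$ must be of order $\log y$. With $y=\epsilon k^\lambda$ the smooth part may then have degree of order $k^\lambda\log k$, which exceeds $\lambda\epsilon k^\lambda$. So $\deg(AD_r)$ overshoots $N$ when $k$ is near $n/2$. The fix is to lower the smoothness level by a logarithm. The paper takes $m_k=\lfloor k^\lambda/(8\log n)\rfloor$ and $u=\epsilon\log m_k$, so that $\deg B\le \epsilon m_k\log m_k\le \lambda\epsilon k^\lambda/8$.

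Third, you misattribute the role of $\MM$ and drop the power of $x$. The multiplicity $R$ is a deterministic count over all $\bfC$, bounded by the shape of the covering family (the paper gets $m_k^6$). Discarding $\MM$ does nothing for it. What $\MM$ is actually needed for is the issue you call harmless. A degree-$k$ divisor may be $x^\nu D'$ with $x\nmid D'$, and covering elements must be prime to $x$. The $x$-free part then has degree $k-\nu$, so your constraint $\deg D_r=k-\deg D_s$ misses these cases. Off $\MM$ one has $\nu\le(\log n)^3$, whence
\[
\KK_3(p,k)\smallsetminus\MM\subseteq\bigcup_{\nu\le(\log n)^3}\KK_3'(p,k-\nu),
\]
with $\KK_3'$ the event of an $x$-free divisor of the given degree. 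This costs a factor $(\log n)^3$ in $R$, $S$, $T$, and it is the paper's first step.
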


\begin{proof}
Let
\[
    \KK_3'(p,k)
    =
    \{\exists D\mid f_{n,p}:\ \deg D=k,\ x\nmid D\}.
\]
Since \(f_{n,p}\notin\MM\) implies that the multiplicity of \(x\) in
\(f_{n,p}\) is at most \((\log n)^3\), we have
\[
    \KK_3(p,k)\smallsetminus\MM
    \subseteq
    \bigcup_{0\le \nu\le (\log n)^3}
    \KK_3'(p,k-\nu).
\]
Thus, by Lemma~\ref{lem:union_controllable} it suffices to prove that \(\KK_3'(p,k)\) is
\[
    \left(N,M,\frac{R}{(\log n)^3},
    \frac{S}{(\log n)^3},n\right)\text{-controllable}.
\]

We want to work outside the exceptional event
\[
    \UU_k
    =
    \bigcup_{k^{\lambda/2}\le m\le n/\log n}
        \left(
            \KK_1(p,m,\epsilon\log m)
            \cup
            \KK_2^{+}(p,\log_2\tau,t_m,m)
        \right),
\]
where
\[
    t_m
    =
    \left(1+\frac{\epsilon}{3}\right)
    \frac{\log m}{L_{\log_2\tau}(m)} = 1+\frac{\epsilon}{3} + o_{m\to \infty}(1).
\]
It is $(N_{\UU}, M_{\UU},R_{\UU},S_{\UU},T_{\UU})$-controllable by Lemmas~\ref{lem:union_controllable}, \ref{lem:bigsmoothpart}, and
\ref{lem:typical_addfun},  where
\[
    N_\UU=\frac n4+\log n,
    \qquad
    M_\UU=\frac{n}{24\log n}, \qquad R_{\UU},T_{\UU}\ll_{\epsilon} n^{O(1)}, \qquad S_{\UU}\ll k^{-\lambda c_{\epsilon}/2}.
\]
We elaborate further on the latter bound:
Lemma~\ref{lem:bigsmoothpart}, applied with
\(u=\epsilon\log m\), gives
\[
    S_{\KK_1(p,m,\epsilon\log m)}
    \ll m e^{-C\epsilon\log m}
    =
    m^{1-C\epsilon}.
\]
On the other hand, Lemma~\ref{lem:typical_addfun}, applied to
\(\phi=\log_2\tau\), gives
\[
    S_{\KK_2^+(p,\log_2\tau,t_m,m)}
    \ll_\epsilon
    m n^{-C/4}
    +
    \exp\{-(t_m\log t_m-t_m+1)L_{\log_2\tau}(m)\}
    +
    m^{-c_\epsilon}.
\]
Since \(L_{\log_2\tau}(m)=\log m+O(1)\), the exponential term is
\[
    \ll_\epsilon m^{-(t_m\log t_m-t_m+1)}.
\]
Also \(t_m=1+\epsilon/3+o(1)\), so for all sufficiently large \(m\),
\[
    t_m\log t_m-t_m+1\ge c_\epsilon>0.
\]
Choosing the constant \(C\) in Lemma~\ref{lem:bigsmoothpart}
sufficiently large in terms of \(\epsilon\), and using
\(m\ge k^{\lambda/2}\), we get
\[
    S_\UU
    \ll_\epsilon
    \sum_{k^{\lambda/2}\le m\le n/\log n}
    \left(
        m^{1-C\epsilon}
        +
        m^{-c_\epsilon}
    \right)
    \ll_{\epsilon,\lambda} k^{-\lambda c_\epsilon/2}.
\]

It remains to prove the required controllability for
\(\KK_3''(p,k):=\KK_3'(p,k)\smallsetminus\UU_k\).

    In the proof of
    \cite[Lemma~9.4]{BarySoroker-Koukoulopoulos-Kozma2023}, with
    \(\theta=1/2\) and \(r=1\) in \emph{loc.\ cit.}  it is shown that $\KK_3''(p,k)$ 
    is covered by \((\mathcal D_{k},\mathcal I_p(m_k))\), where $m_k = \lfloor\frac{k^{\lambda}}{8\log n}\rfloor$ and 
    \[
        \mathcal D_{k} =
        \left\{
        [B,D] :
        \begin{array}{l}
        B^{(m_k)}=B,\quad \deg B\le \epsilon m_k\log m_k,\quad \deg D=k,\\
        \tau(B)\le m_k^{(1+\epsilon)\log 2},\quad D^{(m_k)}\mid B,\quad x\nmid D
        \end{array}
        \right\}.
    \]
    The corresponding representation multiplicity is at most \(m_k^6\), and
    \[
        \mathfrak S(\mathcal I_p(m_k))\mathfrak H(\mathcal D_{k})
        \ll
        k^{-\lambda(1-\log 2-\epsilon)}\log n,
    \]
    for \(k\) sufficiently large in terms of \(\epsilon,\lambda\).  Here we use again that 
        $L_{\log_2\tau}(m)
        =
        \log m+O(1),$
    so that \(t_m\in(1,2)\) for large \(m\).
    This completes the proof.
\end{proof}

\subsubsection{Partition Events Excluding Proper Transitive Subgroups}
    Following \cite[Section~12.1]{BarySoroker-Koukoulopoulos-Kozma2023}, we introduce partition events that rule out proper transitive subgroups. More precisely, if the cycle structure of a permutation \(\sigma\) is a sufficiently small merging of a partition satisfying these events, then every transitive subgroup containing \(\sigma\) is either \(A_n\) or \(S_n\).
    
    Our goal is to show that, with a summable error term, at least one of the random partitions
    \[
    \rho_{p},\qquad p\mid P
    \]
    defined in \eqref{eq:randompartrho_i} satisfies all of these events. 

    If \(\EE\subseteq\calP_n\) is a subset of partitions, then for each
    prime \(p\mid P\) we denote by
    \[
    \EE(p)
    =
    \{\rho_{p}\in\EE\}
    \]
    the corresponding event on $f_n$.

    For $\alpha,\kappa>0$, define
    \[
        \EE_1=\EE_{1,\alpha,\kappa}
        =
        \left\{
            \rho\in\calP_n:
            \forall k,\ell\le \frac n4,\ 
            \gcd(k,\ell)\ge n^{\kappa\alpha}
            \Longrightarrow
            \{k,\ell\}\not\subseteq \rho
        \right\}.
    \]
    
    \begin{lemma}\label{lem:E1-controllable}
    For $\alpha,\kappa>0$, for \(p\mid P\), and all sufficiently large \(n\), the event \(\overline{\EE_1}(p)\) is
    \((n/2,S_1,1)\)-controllable, where
    \[
        S_1\ll (\log n)^2 n^{-\kappa\alpha}.
    \]
    \end{lemma}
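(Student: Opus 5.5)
We have to cover the complement event: $\rho_p$ contains two parts $k,\ell\le n/4$ with $\gcd(k,\ell)\ge n^{\kappa\alpha}$. If $\rho_p$ contains parts $k$ and $\ell$ (as a multiset, so possibly $k=\ell$ appearing twice), then $f_{n,p}$ is divisible by a product $IJ$ of distinct monic irreducibles with $\deg I=k$, $\deg J=\ell$. Here we may discard the irreducible $x$: it has degree $1<n^{\kappa\alpha}$, so it never realizes such a pair for large $n$. The plan is therefore to take a single covering family ($|A|=1$, $\calI=\emptyset$):
\[
\calD=\{IJ:\ I\neq J \text{ monic irreducible},\ I,J\neq x,\ \deg I,\deg J\le n/4,\ \gcd(\deg I,\deg J)\ge n^{\kappa\alpha}\}\subseteq \F_p[x].
\]
Every element has degree at most $n/2$, so the event $\overline{\EE_1}(p)$ is covered by $\calD$ and is $(n/2,\frak H(\calD),1)$-controllable. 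It remains only to bound $\frak H(\calD)$.

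By \eqref{eq:PPT}, $\sum_{\deg I=j}\|I\|^{-1}=\pi_p(j)p^{-j}\le 1/j$. Hence
\[
\frak H(\calD)\le \sum_{\substack{k,\ell\le n/4\\ \gcd(k,\ell)\ge n^{\kappa\alpha}}}\frac{1}{k\ell}
\le \sum_{d\ge n^{\kappa\alpha}}\ \sum_{a,b\le n/(4d)}\frac{1}{d^2ab},
\]
where we have written $k=da$, $\ell=db$ and let $d$ range over possible common divisors (which overcounts, which is harmless). The inner double sum is at most $(\log n+1)^2$. Also $\sum_{d\ge n^{\kappa\alpha}}d^{-2}\ll n^{-\kappa\alpha}$. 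This gives $S_1\ll(\log n)^2n^{-\kappa\alpha}$, as claimed.

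There is no serious obstacle. The only points needing care are the multiset case $k=\ell$ (handled by requiring $I\neq J$, which is automatic since the two parts come from distinct factors, or from a repeated factor giving $I^2\mid f_{n,p}$, which is also covered if we allow $I=J$ at cost $\sum_j j^{-2}p^{-j}$, negligible) and the exclusion of $x$. Both are routine.
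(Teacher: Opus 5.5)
Your argument is correct and is essentially the paper's proof. Like the paper, you cover $\overline{\EE_1}(p)$ by products $IJ$ of irreducibles of the prescribed degrees (with $x$ excluded for large $n$), bound $\mathfrak{H}(\calD)$ by $\sum 1/(k\ell)$ via \eqref{eq:PPT}, and group according to a common divisor $d\ge n^{\kappa\alpha}$.

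The one wrinkle is your initial restriction $I\neq J$, which does not cover a repeated factor $I^2\mid f_{n,p}$. The paper simply allows $I=J$ from the start. Since $\mathfrak{H}(\calD)\le\sum_{I,J}\|I\|^{-1}\|J\|^{-1}$ already contains these diagonal terms, no separate estimate is needed. Your separate estimate should read $\pi_p(j)p^{-2j}\le p^{-j}/j$ rather than $j^{-2}p^{-j}$, though the term is negligible either way.
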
   
    
    \begin{proof}
    Let \(\calD\) be the collection of polynomials \(D=IJ\), where
    \(I,J\in\F_p[x]\) are monic irreducible polynomials of degrees \(k,\ell\le n/4\),
    respectively, and
    \[
        \gcd(k,\ell)\ge n^{\kappa\alpha}.
    \]
    Then \(\deg D\le n/2\), and for \(n\) sufficiently large we also have
    \(x\nmid D\).
    
    If \(f_n\in\overline{\EE_1}(p)\), then \(\rho_p\) has two parts
    \(k,\ell\le n/4\) with \(\gcd(k,\ell)\ge n^{\kappa\alpha}\). Hence
    \(f_{n,p}\) has irreducible factors \(I,J\) of degrees \(k,\ell\), counted with
    multiplicity, and therefore \(IJ\mid f_{n,p}\) for some \(IJ\in\calD\).
    Thus \(\calD\) covers \(\overline{\EE_1}(p)\).
    
    It remains to estimate the harmonic weight. Grouping the degrees \(k,\ell\)
    according to \(d=\gcd(k,\ell)\), and using that $\pi_{p}(a)/p^{a}\leq 1/a$, gives
    \[
    \begin{aligned}
        S_1:=\frak H(\calD)
        &\le
        \sum_{d\ge n^{\kappa\alpha}}
        \sum_{a,b\le n/(4d)}
        \frac{\pi_p(da)}{p^{da}}
        \frac{\pi_p(db)}{p^{db}}  \\
        &\leq
        \sum_{d\ge n^{\kappa\alpha}}
        \frac1{d^2}
        \sum_{a,b\le n/(4d)}
        \frac1{ab}
        \ll
        (\log n)^2 n^{-\kappa\alpha},
    \end{aligned}
    \]
    as needed. 
    \end{proof}

    For $\delta > 0$, define 
    \[
        \EE_2 = \EE_{2,\delta}
        =
        \left\{
        \rho\in\calP_n:
        \forall a\mid n,\ 2\le a\le n^{\delta/2},\
        \forall j\in[a-1],\
        \frac{nj}{a}
        \text{ cannot be expressed as a sum of parts of }\rho
        \right\}.
    \]

    \begin{lemma}\label{lem:E2-controllable}
    Let \(p\mid P\), and put
    \[
        \MM_p=\MM(p,\lceil\log n\rceil^3),
    \]
    as in \eqref{eq:highmul}. Let \(\epsilon,\lambda\in(0,1/4)\) and $\delta < \frac{\lambda c_{\epsilon}}{20}$, where \(c_{\epsilon}>0\) is the constant of
    Lemma~\ref{lem:div_deg_k}. For sufficiently large \(n\) in terms of
    \(\epsilon,\lambda,\delta\), the event 
    \(\overline{\EE_2}(p)\smallsetminus\MM_p\) is
    \((N_2,M_2,R_2,S_2,T_2)\)-controllable, where
    \[
        N_2=\frac n2+\lambda\epsilon n^\lambda,
        \qquad
        M_2=\frac{n}{24\log n},
        \qquad
        R_2,T_2\ll_{\epsilon,\lambda,\delta} n^{O(1)},
        \qquad
        S_2\ll_{\epsilon,\lambda,\delta}
        n^{-\delta}.
    \]
    \end{lemma}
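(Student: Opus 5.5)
The plan is to reduce $\overline{\EE_2}(p)$ to a union of divisor-degree events $\KK_3(p,k)$ and then apply Lemma~\ref{lem:div_deg_k} together with Lemma~\ref{lem:union_controllable}.

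If $\rho_p\notin\EE_2$, there are $a\mid n$ with $2\le a\le n^{\delta/2}$ and $j\in[a-1]$ such that $nj/a$ is a sum of parts of $\rho_p$. Such a sum is exactly the degree of a divisor of $f_{n,p}$, since each part is the degree of an irreducible factor counted with multiplicity. Hence
\[
    \overline{\EE_2}(p)\subseteq \bigcup_{a,j}\KK_3\Bigl(p,\tfrac{nj}{a}\Bigr).
\]
Using the symmetry $\KK_3(p,k)=\KK_3(p,n-k)$, each event on the right equals $\KK_3(p,k)$ with $k=\min(nj/a,\,n-nj/a)\le n/2$. In every case $k\ge n/a\ge n^{1-\delta/2}$.

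The index set $\{(a,j)\}$ has size at most $\sum_{a\le n^{\delta/2}}a\le n^{\delta}$. Moreover, $k\ge n^{1-\delta/2}\to\infty$, so for $n$ large in terms of $\epsilon,\lambda,\delta$ every such $k$ is large enough for Lemma~\ref{lem:div_deg_k} to apply.

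By that lemma, each $\KK_3(p,k)\smallsetminus\MM_p$ is $(N_k,M,R_k,S_k,T_k)$-controllable, with the following parameters:
\begin{enumerate}
    \item $N_k=n/2+\lambda\epsilon k^\lambda\le n/2+\lambda\epsilon n^\lambda$, since $k\le n$;
    \item $M=n/(24\log n)$;
    \item $R_k,T_k\ll n^{O(1)}$;
    \item $S_k\ll k^{-\lambda c_\epsilon/10}\le n^{-(1-\delta/2)\lambda c_\epsilon/10}$.
\end{enumerate}

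Taking the union over the at most $n^\delta$ pairs $(a,j)$ via Lemma~\ref{lem:union_controllable} gives the stated $N_2$ and $M_2$. The multiplicity and cardinality bounds become $R_2,T_2\ll n^{\delta}n^{O(1)}=n^{O(1)}$. The sieve weight satisfies
\[
    S_2\ll n^{\delta-(1-\delta/2)\lambda c_\epsilon/10}.
\]

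The only real check is that this exponent is at most $-\delta$, i.e.\ that $2\delta\le(1-\delta/2)\lambda c_\epsilon/10$. This follows from $\delta<\lambda c_\epsilon/20$, provided the bound $\delta<\lambda c_\epsilon/20$ leaves room for the factor $(1-\delta/2)$. Since $\delta<1/80$, we have $(1-\delta/2)>0.99$, so we need $2\delta\le 0.99\,\lambda c_\epsilon/10$. This can fail only on a sliver of admissible $\delta$, and is handled by the strict inequality: either shrink $\delta$ slightly, or note that the proof of Lemma~\ref{lem:div_deg_k} yields $S\ll k^{-\lambda c_\epsilon/2}$ up to logs, which gives ample room.

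Thus the main obstacle is only this bookkeeping of exponents, together with making sure all $k$ lie in the range where Lemma~\ref{lem:div_deg_k} applies.
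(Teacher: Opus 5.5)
Your reduction to the events $\KK_3(p,k)\smallsetminus\MM_p$ with $n^{1-\delta/2}\le k\le n/2$ is exactly the paper's, and so are your bounds for $N_2$, $M_2$, $R_2$, $T_2$. The gap is in the bound for $S_2$.

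Write $\beta=\lambda c_\epsilon/10$. You bound each of the at most $n^{\delta}$ terms by the worst case $k^{-\beta}\le n^{-(1-\delta/2)\beta}$. This gives $S_2\ll n^{\delta-\beta+\delta\beta/2}$, which is $\ll n^{-\delta}$ only when $\delta\le \beta/(2+\beta/2)$. The lemma, however, allows every $\delta<\beta/2$. The strict inequality does not close this window, and neither of your patches is legitimate:
\begin{enumerate}
  \item You cannot shrink $\delta$. It is a parameter of the event $\EE_{2,\delta}$, since it fixes the range $a\le n^{\delta/2}$, and it also appears in the target bound $n^{-\delta}$. With a smaller $\delta$ you would be bounding a different, smaller event.
  \item The claim that the proof of Lemma~\ref{lem:div_deg_k} really yields $k^{-\lambda c_\epsilon/2}$ goes beyond that lemma's statement, which is what you are entitled to use. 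You also do not verify it.
\end{enumerate}

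The missing idea is to keep the dependence on $j$ instead of using the worst case; only the few pairs with small $j$ have $k$ near $n/a$. Restrict to $1\le j\le a/2$ and set $k=nj/a$. Then for fixed $a$, since $\beta<1$,
\[
\sum_{1\le j\le a/2}\Bigl(\frac{nj}{a}\Bigr)^{-\beta}
=\Bigl(\frac{a}{n}\Bigr)^{\beta}\sum_{1\le j\le a/2}j^{-\beta}
\ll \Bigl(\frac{a}{n}\Bigr)^{\beta}a^{1-\beta}=a\,n^{-\beta}.
\]
Summing over $a\le n^{\delta/2}$ gives
\[
S_2\ll n^{-\beta}\sum_{a\le n^{\delta/2}}a\ll n^{\delta-\beta}\le n^{-\delta},
\]
which uses exactly the hypothesis $\beta>2\delta$. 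This is how the paper concludes.

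Your weaker exponent would be harmless in the later application in Proposition~\ref{prop:partition-events}, where any fixed power saving suffices. It does not, however, prove the lemma as stated.
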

        
    \begin{proof}
    Let
    \[
        \Omega
        =
        \left\{
            (a,j):
            a\mid n,\ 2\le a\le n^{\delta/2},\ 1\le j\le \frac a2
        \right\}.
    \]
    Assume \(\overline{\EE_2}(p)\). Then there exist
    \(a\mid n\), \(2\le a\le n^{\delta/2}\), \(j\in[a-1]\) such that $\frac{nj}{a}$
    can be expressed as a sum of parts of \(\rho_p\).
    Replacing \(j\) by \(a-j\) if necessary, we may assume that
    \(j\le a/2\). Since sums of parts of \(\rho_p\) are in bijection with
    degrees of divisors of \(f_{n,p}\), we conclude that
    \(f_{n,p}\) has a divisor of degree \(nj/a\). Hence
    \[
        \overline{\EE_2}(p)\smallsetminus\MM_p
        \subseteq
        \bigcup_{(a,j)\in\Omega}
        \left(
            \KK_3\left(p,\frac{nj}{a}\right)\smallsetminus\MM_p
        \right).
    \]
    For \((a,j)\in\Omega\), put \(k=nj/a\). Then
    \[
        n^{1-\delta/2}\le k\le \frac n2,
    \]
    so \(k\) is sufficiently large for Lemma~\ref{lem:div_deg_k}. Applying that
    lemma to each \(k=nj/a\), and then using Lemma~\ref{lem:union_controllable},
    gives the claimed values of \(N_2,M_2,R_2,T_2\), except for the estimate on
    \(S_2\).
    
    It remains to bound the total \(S\)-contribution. By Lemma~\ref{lem:div_deg_k},
    with
    \[
        \beta=\frac{\lambda c_\epsilon}{10},
    \]
    the contribution of \(k=nj/a\) is
    \[
        \ll k^{-\beta}
        =
        \left(\frac{nj}{a}\right)^{-\beta}.
    \]
    Therefore, 
    \[
        S_2
        \ll
        \sum_{(a,j)\in \Omega}
        \left(\frac{nj}{a}\right)^{-\beta}  
        \ll
        n^{-\beta}
        \sum_{a\le n^{\delta/2}}
        a^\beta
        \sum_{1\le j\le a/2} j^{-\beta}
        \ll
        n^{-\beta}
        \sum_{a\le n^{\delta/2}} a
        \ll
        n^{-\beta+\delta}\leq n^{-\delta},
    \]
    since $\beta>2\delta$.
    This proves the asserted bound for \(S_2\), and completes the proof.
    \end{proof}
    
    For $\alpha,t\in(0,1)$ define
    \[
        \EE_3 = \EE_{3,\alpha,t}
        =
        \left\{
            \rho\in\calP_n:
            \rho
            \text{ has at least }
            \frac{\alpha t}{2}\log n
            \text{ parts in }
            [n^{1-\alpha},n/\log n]
        \right\}.
    \]
    
    \begin{lemma}\label{lem:E3-controllable}
    For $t,\alpha\in (0,1)$, \(p\mid P\), and all sufficiently large
    \(n\), the event \(\overline{\EE_3}(p)\) is
    \((N_3,M_3,R_3,S_3,T_3)\)-controllable, where
    \[
        N_3=\frac n4+\log n,
        \qquad
        M_3=\frac{n}{24\log n},
        \qquad 
        R_3,T_3\ll n^{O(1)},
        \qquad
        S_3\ll n^{-c_3}
    \]
    for some constant \(c_3=c_3(\alpha,t)>0\).
    \end{lemma}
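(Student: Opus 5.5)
The plan is to express $\overline{\EE_3}(p)$ as a lower-tail event for an additive function on a smooth part, and then apply Lemma~\ref{lem:typical_addfun} together with Lemma~\ref{lem:bigsmoothpart}. Put $m=\lfloor n/\log n\rfloor$ and let $\phi$ be the additive function with $\phi(I)=1$ if $n^{1-\alpha}\le \deg I\le m$ and $I\neq x$, and $\phi(I)=0$ otherwise. On prime powers we set $\phi(I^\nu)=\phi(I)$, so that $\phi$ counts distinct irreducible factors with degree in the window. Then $\phi$ satisfies \eqref{eq:phi-cond}, since $0\le\phi(I^\nu)\le 1\le C_1\log\nu$ for $\nu\ge2$ and $C_1\ge 3$.

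If $\rho_p$ has fewer than $\frac{\alpha t}{2}\log n$ parts in $[n^{1-\alpha},n/\log n]$, then $f_{n,p}$ has fewer than that many distinct irreducible factors with degree in the window. All such factors divide $f_{n,p}^{(m)}$, so $\phi(f_{n,p}^{(m)})<\frac{\alpha t}{2}\log n$.

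Next we compute $L_\phi(m)$. By \eqref{eq:PPT},
\[
L_\phi(m)=\sum_{n^{1-\alpha}\le j\le m}\frac{\pi_p(j)}{p^j}=\sum_{n^{1-\alpha}\le j\le n/\log n}\frac1j+o(1)=\alpha\log n-\log\log n+O(1),
\]
which is at least $\tfrac{3}{4}\alpha\log n$ for large $n$. Hence $\frac{\alpha t}{2}\log n\le t' L_\phi(m)$ with $t'=\frac{2t}{3}<1$, and therefore $\overline{\EE_3}(p)\subseteq \KK_2^-(p,\phi,t',m)$.

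Now apply Lemma~\ref{lem:typical_addfun} with $C_2=3$, and with $C$ (the constant coming from Lemma~\ref{lem:bigsmoothpart}) chosen large. The condition $C_3\le m\le n/\log n$ holds. The lemma shows that $\KK_2^-$ is $(n/4+\log n,\ m/24,\ n^{O(1)},\ S,\ m+2)$-controllable, where
\[
S\ll m n^{-C/4}+\exp\bigl(-(t'\log t'-t'+1)L_\phi(m)\bigr)+m^{-(3\log 3-2)}.
\]
Since $t'<1$, the quantity $c':=t'\log t'-t'+1$ is strictly positive. The middle term is therefore $\ll n^{-\frac34\alpha c'}$. The last term is $\ll n^{-c''}$ with $c''$ close to $3\log3-2>0$, and the first term is $\le n^{-1}$ once $C\ge 8$. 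Thus $S_3\ll n^{-c_3}$ with $c_3=\min(\frac34\alpha c',\ c''/2,\ 1)$, which depends only on $\alpha$ and $t$.

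The other parameters follow directly. We get $M_3=m/24=\frac{n}{24\log n}$ and $N_3=n/4+\log n$, while $T_3=m+2\le n$ and $R_3\ll n^{8}$.

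The main obstacle is the mismatch between counting parts with multiplicity and the additive function, which only sees distinct factors. In the direction we need this is harmless: a deficit of parts implies a deficit of distinct factors, so the inclusion above goes the right way. The only other points to check are that the window lies inside the range where the smooth-part machinery applies, which holds because the window is contained in $\calI_p(m)$, and that the sub-logarithmic loss in $L_\phi(m)$ does not affect the choice of $t'$, which holds for large $n$.
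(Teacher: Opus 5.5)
Your proposal is correct and follows the paper's proof essentially step for step. You use the same additive function counting distinct irreducible factors with degree in $[n^{1-\alpha},n/\log n]$, and the same inclusion $\overline{\EE_3}(p)\subseteq \KK_2^{-}(p,\phi,2t/3,m)$ via $L_\phi(m)=\alpha\log n+O(\log\log n)$, followed by Lemma~\ref{lem:typical_addfun}; you are only more explicit than the paper about the secondary error terms (fixing $C_2=3$ and $C\ge 8$).
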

        
    \begin{proof}
    Put
    \(
        m=\left\lfloor \frac{n}{\log n}\right\rfloor.
    \)
    Define an additive function \(\phi\) on \(\F_p[x]\) by
    \[
        \phi(I^\nu)
        =
        \begin{cases}
            1, & n^{1-\alpha}\le \deg I\le m,\\
            0, & \text{otherwise},
        \end{cases}
    \]
    for every irreducible \(I\) and every \(\nu\ge1\). Then \(\phi\) satisfies
    \eqref{eq:phi-cond}. Moreover, by \eqref{eq:PPT}
    \[
        L_\phi(m)
        =
        \sum_{n^{1-\alpha}\le \deg I\le m}\|I\|^{-1}
        =
        \sum_{n^{1-\alpha}\le j\le m}\frac1j+O(1)
        =
        \alpha\log n+O(\log\log n).
    \]
    For sufficiently large \(n\), we therefore have
    \[
        \frac{\alpha t}{2}\log n
        \le
        \frac{2t}{3}L_\phi(m).
    \]
    As 
    \(
        \phi(f_{n,p}^{(m)}) 
    \)
    counts the number of distinct irreducible factors of $f_{n,p}$ with degree in $[n^{1-\alpha},m]$, and the number of parts of $\rho_p$ is the same but with multiplicity, we conclude that 
    \[
        \overline{\EE_3}(p)
        \subseteq
        \KK_2^{-}\left(p,\phi,\frac{2t}{3},m\right).
    \]
    
    Thus, by Lemma~\ref{lem:typical_addfun}, $\overline{\EE_3}(p)$ is  an \((N_3,M_3,R_3,S_3,T_3)\)-controllable event, with
    \[
        N_3=\frac n4+\log n,
        \qquad
        M_3=\frac{m}{24}\le \frac{n}{24\log n},
        \qquad
        R_3,T_3\ll n^{O(1)}.
    \]
    Finally, since \(2t/3<1\), the large-deviation term in
    Lemma~\ref{lem:typical_addfun} gives
    \[
        \exp\left(
        -\left(\frac{2t}{3}\log\frac{2t}{3}-\frac{2t}{3}+1\right)
        L_\phi(m)
        \right)
        \ll n^{-c_3}
    \]
    for some \(c_3=c_3(\alpha,t)>0\). The remaining terms in the bound for \(S_3\)
    are also \(O(n^{-c_3})\), after decreasing \(c_3\) if necessary. This proves
    the lemma.
    \end{proof}
    
    Define
    \[
        U_4
        =
        \left\{
            u\le \frac{\sqrt n}{3}:
            \exists q>n^{1/8}\text{ prime such that }q\mid u
        \right\},
    \]
    and for $t\in (0,1)$ let
    \[
        \EE_4=\EE_{4,t}
        =
        \left\{
            \rho\in\calP_n:
            \rho
            \text{ has at least }
            \frac{t\log n}{4}
            \text{ parts in }U_4
        \right\}.
    \]
    
    \begin{lemma}\label{lem:E4-controllable}
    For $t\in(0,1)$, \(p\mid P\), and all sufficiently large
    \(n\), the event \(\overline{\EE_4}(p)\) is
    \((N_4,M_4,R_4,S_4,T_4)\)-controllable, where
    \[
        N_4=\frac n4+\log n,
        \qquad
        M_4=\frac{\sqrt n}{72},
        \qquad
        R_4,T_4\ll n^{O(1)},
        \qquad
        S_4\ll n^{-c_4}
    \]
    for some constant \(c_4=c_4(t)>0\).
    \end{lemma}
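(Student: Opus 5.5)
The plan is to mimic the proof of Lemma~\ref{lem:E3-controllable}. We encode the relevant parts of $\rho_p$ by an additive function and then reduce to the lower-tail event $\KK_2^-$ of Lemma~\ref{lem:typical_addfun}.

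Put $m=\lfloor 3\sqrt n\rfloor$, so that $M=m/24=\sqrt n/8$. This $m$ is chosen large enough that $m$-smooth parts see all degrees up to $\sqrt n/3$. Define $\phi$ on $\F_p[x]$ by $\phi(I^\nu)=1$ if $\deg I\in U_4$ and $\phi(I^\nu)=0$ otherwise. Then $\phi$ is additive and satisfies \eqref{eq:phi-cond}. Since every element of $U_4$ is at most $\sqrt n/3\le m$, the quantity $\phi(f_{n,p}^{(m)})$ counts the distinct irreducible factors of $f_{n,p}$ whose degree lies in $U_4$. This is at most the number of parts of $\rho_p$ in $U_4$. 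Hence
\[
\overline{\EE_4}(p)\subseteq\Bigl\{\phi(f_{n,p}^{(m)})<\tfrac{t}{4}\log n\Bigr\}.
\]
The threshold $m\le n/\log n$ required by Lemma~\ref{lem:typical_addfun} clearly holds.

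Next we compute $L_\phi(m)$. By \eqref{eq:PPT},
\[
L_\phi(m)=\sum_{u\in U_4}\frac1u+O(1).
\]
The sum runs over $u\le \sqrt n/3$ having a prime factor $q>n^{1/8}$. Such a $u$ has at most three such prime factors, counted with multiplicity, since $q^3>\sqrt n$ would fail only barely; in any case a $u$ with two such factors contributes negligibly. So the sum is, up to $O(1)$,
\[
\sum_{n^{1/8}<q\le \sqrt n/3}\frac1q\sum_{v\le \sqrt n/(3q)}\frac1v.
\]
By Mertens' theorem this equals $\int_{1/8}^{1/2}(\tfrac12-s)\log n\,\tfrac{ds}{s}+O(\log\log n)$. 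Call the coefficient $\gamma$; it is about $0.32$, so $L_\phi(m)=\gamma\log n+O(\log\log n)$. This lower bound on $L_\phi$ is the main point to verify: we need $\gamma>1/4$ so that $\frac t4\log n\le t' L_\phi(m)$ with $t'=t/(4\gamma)+o(1)<1$. Indeed $\gamma=\tfrac12\log4-\tfrac38=0.693-0.375\approx0.318>1/4$, so we may take $t'=\tfrac{t+1}{2}\cdot\frac{1}{4\gamma}\cdot 4\gamma$, or more simply any fixed $t'\in(t/(4\gamma),1)$. Then
\[
\overline{\EE_4}(p)\subseteq\KK_2^-(p,\phi,t',m).
\]

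Lemma~\ref{lem:typical_addfun} then makes this event $(n/4+\log n,\ m/24,\ R,\ S,\ m+2)$-controllable. Here $m/24\le \sqrt n/8$; the statement's $M_4=\sqrt n/72$ is matched by instead taking $m=\lfloor\sqrt n/3\rfloor$, which still covers all of $U_4$. The bounds are $R\ll n^{O(1)}$ and $T=m+2\ll n$. The error term is
\[
S\ll mn^{-C/4}+\exp\bigl(-(t'\log t'-t'+1)\gamma\log n\bigr)+m^{-(3\log3-2)}.
\]
With $C$ large, all three terms are $\ll n^{-c_4}$ for some $c_4=c_4(t)>0$, using $m\asymp\sqrt n$. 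The only genuine obstacle is the Mertens-type computation showing $\gamma>1/4$; everything else is bookkeeping identical to Lemma~\ref{lem:E3-controllable}.
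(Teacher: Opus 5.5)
Your overall route matches the paper's: the additive $\phi$ detecting degrees in $U_4$, the containment $\overline{\EE_4}(p)\subseteq\{\phi(f_{n,p}^{(m)})<\tfrac t4\log n\}$, the choice $m=\lfloor\sqrt n/3\rfloor$ (giving $M_4=\sqrt n/72$), and the reduction to $\KK_2^-(p,\phi,t',m)$ with a fixed $t'<1$. That reduction needs $L_\phi(m)$ to exceed $\tfrac14\log n$ by a constant factor, and the gap is in exactly this step, which you single out as the crux.

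The sum $\sum_{n^{1/8}<q\le\sqrt n/3}\frac1q\sum_{v\le\sqrt n/(3q)}\frac1v$ counts each $u\in U_4$ once for every prime $q>n^{1/8}$ dividing $u$. It is therefore an \emph{upper} bound for $\sum_{u\in U_4}1/u$, whereas you need a \emph{lower} bound. The overcount is also not negligible. Integers $u\le\sqrt n/3$ with two prime factors above $n^{1/8}$ contribute roughly $\tfrac12\log n\iint(\tfrac12-s_1-s_2)\frac{ds_1\,ds_2}{s_1s_2}$, integrated over $s_1,s_2>\tfrac18$, $s_1+s_2<\tfrac12$. This is about $0.04\log n$, more than half of the margin $0.318-0.25$ you are trying to certify. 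In fact $L_\phi(m)=(c+o(1))\log n$ with $c\approx0.278$. So your asserted asymptotic $L_\phi(m)=\gamma\log n+O(\log\log n)$ with $\gamma\approx0.318$ is false, and your argument does not establish $\gamma>1/4$.

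The inequality you need is nevertheless true, and the repair is to bound the complement instead. Every $u\le\sqrt n/3$ outside $U_4$ is $n^{1/8}$-smooth, so by Mertens' product theorem
\[
\sum_{\substack{u\le\sqrt n/3\\ u\notin U_4}}\frac1u\le\prod_{q\le n^{1/8}}\Bigl(1-\frac1q\Bigr)^{-1}=\bigl(e^{\gamma_E}+o(1)\bigr)\frac{\log n}{8}.
\]
Hence $L_\phi(m)\ge\bigl(\tfrac12-\tfrac{e^{\gamma_E}}{8}+o(1)\bigr)\log n\approx0.277\log n$, and this is at least $(\tfrac14+o(1))\log n$ because $e^{\gamma_E}<2$. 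Truncating inclusion--exclusion after the pair term would also work.

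This supplies the $(\tfrac14+o(1))\log n$ lower bound that the paper's proof quotes from the elementary estimate in Lemma~12.5 of Bary-Soroker--Koukoulopoulos--Kozma. With it, $t'=(1+t)/2$ already works. The rest of your bookkeeping then goes through, provided you use the lower bound $L_\phi(m)\ge(\tfrac14+o(1))\log n$, rather than an equality, in the large-deviation term of $S$.
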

    
    \begin{proof}
    We argue exactly as in the proof of
    Lemma~\ref{lem:E3-controllable}, replacing the interval
    \([n^{1-\alpha},n/\log n]\) by the set \(U_4\).
    So \(\phi\) is defined by
    \[
        \phi(I^\nu)
        =
        \begin{cases}
            1, & \deg I\in U_4,\\
            0, & \text{otherwise}.
        \end{cases}
    \]
    Moreover, by \eqref{eq:PPT},
    \[
        L_\phi(m)
        =
        \sum_{\substack{u\le \sqrt n/3\\ \exists q>n^{1/8},\ q\mid u}}
        \frac1u
        +O(1).
    \]
    The elementary estimate used in
    \cite[Lemma~12.5]{BarySoroker-Koukoulopoulos-Kozma2023} gives
    \[
        L_\phi(m)\ge \left(\frac14+o(1)\right)\log n.
    \]
    Hence, for \(n\) sufficiently large,
    \[
        \overline{\EE_4}(p)
        \subseteq
        \KK_2^{-}(p,\phi,t',m)
    \]
    for, say $t'=\frac{1+t}{2}\in (t,1)$.
    
    The conclusion now follows from
    Lemma~\ref{lem:typical_addfun} exactly as in the proof of
    Lemma~\ref{lem:E3-controllable}.
    \end{proof}

For $\alpha \in (0,1/2)$, define
\[
    \EE_5=\EE_{5,\alpha}
    =
    \left\{
        \rho\in\calP_n:
        \forall a\ge2,\ 
        \rho \text{ has a part }
        u\in[n^{1-\alpha},n/\log n]
        \text{ such that }a\nmid u
    \right\}.
\]

\begin{lemma}\label{lem:E5-controllable}
Let \(\alpha,t \in (0,1)\), and let \(\kappa>0\) be sufficiently small in terms of
\(\alpha\) and \(t\). For every prime \(p\mid P\), and all sufficiently large
\(n\), the event
\[
    \overline{\EE_{5,\alpha}}(p)
    \cap
    \EE_{1,\alpha,\kappa}(p)
    \cap
    \EE_{3,\alpha,t}(p)
\]
is \((N_5,M_5,R_5,S_5,T_5)\)-controllable, where
\[
    N_5=\frac n4+\log n,
    \qquad
    M_5=\frac{n}{24\log n},
    \qquad,
    R_5,T_5\ll n^{O(1)},
    \qquad
    S_5\ll n^{-c_5}
\]
for some constant \(c_5=c_5(\alpha,\kappa,t)>0\).
\end{lemma}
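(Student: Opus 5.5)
We need to show that the event "some integer $a\ge 2$ divides every part of $\rho_p$ lying in $[n^{1-\alpha},n/\log n]$", intersected with $\EE_1$ and $\EE_3$, is controllable with small weight. The plan is to reduce to a single divisibility condition by a prime-sized modulus, and then cover that condition with an additive-function lower-tail event as in Lemma~\ref{lem:E3-controllable}.

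\textbf{Reducing the range of $a$.} Write $J=[n^{1-\alpha},n/\log n]$. On $\EE_3(p)$, $\rho_p$ has at least $\frac{\alpha t}{2}\log n\ge 2$ parts $k,\ell$ in $J\subseteq[1,n/4]$. On $\EE_1(p)$ these satisfy $\gcd(k,\ell)<n^{\kappa\alpha}$. So if a common divisor $a\ge 2$ of all parts in $J$ exists, we may take $a$ prime with $2\le a<n^{\kappa\alpha}$. Hence the event is contained in
\[
\bigcup_{q\ \text{prime},\ q<n^{\kappa\alpha}}\Bigl(\{\text{every part of }\rho_p\text{ in }J\text{ is divisible by }q\}\cap\EE_{3}(p)\Bigr).
\]

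\textbf{Covering each $q$.} Fix a prime $q$ and define an additive $\phi_q$ by $\phi_q(I^\nu)=1$ if $\deg I\in J$ and $q\nmid\deg I$, and $0$ otherwise; this satisfies \eqref{eq:phi-cond}. Take $m=\lfloor n/\log n\rfloor$. The $q$-event says $\phi_q(f_{n,p}^{(m)})=0$, so it lies in $\KK_2^-(p,\phi_q,t',m)$ for any $t'\in(0,1)$; we fix $t'=1/2$. By \eqref{eq:PPT},
\[
L_{\phi_q}(m)=\sum_{j\in J,\ q\nmid j}\frac1j+O(1)=\Bigl(1-\frac1q\Bigr)\alpha\log n+O(\log\log n)\ge \frac{\alpha}{3}\log n .
\]
Lemma~\ref{lem:typical_addfun} then makes this event $(n/4+\log n,\ m/24,\ n^{O(1)},\ S_q,\ m+2)$-controllable, with
\[
S_q\ll m n^{-C/4}+n^{-c\alpha}+m^{-c'},
\]
where $c,c'>0$ are absolute. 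If one insists on the lower tail at $t'=0$ itself, the large-deviation term becomes $\exp(-L_{\phi_q}(m))\le n^{-\alpha/3}$, which only improves the bound.

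\textbf{Union over $q$.} Lemma~\ref{lem:union_controllable} over the fewer than $n^{\kappa\alpha}$ primes $q$ gives $R_5,T_5\ll n^{O(1)}$, $N_5=n/4+\log n$, $M_5\le n/(24\log n)$, and
\[
S_5\ll n^{\kappa\alpha}\max_q S_q .
\]
This is where $\kappa$ small in terms of $\alpha,t$ enters: we need $\kappa\alpha$ smaller than the exponent in $S_q$.

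\textbf{Main obstacle.} The $m^{-c'}$ term in Lemma~\ref{lem:typical_addfun} is only a fixed power of $m\approx n/\log n$, with $c'=t_0\log t_0-t_0+1$ at $t_0=3$. This is still $n^{-c''}$ with $c''$ absolute, so it suffices to choose $\kappa\alpha<\min(c'',c\alpha)/2$. Doing so yields $S_5\ll n^{-c_5}$, which completes the plan.
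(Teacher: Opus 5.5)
Your proposal is correct and follows the paper's proof: you bound $a<n^{\kappa\alpha}$ via $\EE_1\cap\EE_3$, cover each $a$ by a lower-tail event $\KK_2^-(p,\phi_a,\cdot,m)$ with $L_{\phi_a}(m)\ge\frac{\alpha}{3}\log n$ through Lemma~\ref{lem:typical_addfun}, and sum over $a$ with $\kappa$ small; restricting to prime $a$ and taking $t'=1/2$ instead of $t'=0$ are harmless variations. The one point you leave implicit is that the $mn^{-C/4}$ term also has to decay, which requires the constant $C$ from Lemma~\ref{lem:bigsmoothpart} to be chosen large (e.g.\ $C=8$); the paper says this explicitly.
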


\begin{proof}
Put
\[
    m=\left\lfloor\frac{n}{\log n}\right\rfloor,
    \qquad
    J=[n^{1-\alpha},m].
\]
Assume that
\[
    \rho_p\in
    \overline{\EE_{5,\alpha}}
    \cap
    \EE_{1,\alpha,\kappa}
    \cap
    \EE_{3,\alpha,t}.
\]
Then there exists \(a\ge2\) such that every part of \(\rho_p\) lying in \(J\)
is divisible by \(a\). Since \(\rho_p\in\EE_{3,\alpha,t}\) and $n$ sufficiently large, there are two parts $u,v\in J$. Since $a\mid u$ and 
\(a\mid v\) and on $\EE_1$,
\[
    a\le \gcd(u,v)\leq n^{\kappa\alpha}.
\]
Thus it suffices to consider integers \(a\) with
\[
    2\le a<n^{\kappa\alpha}.
\]

For each such \(a\), define an additive function \(\phi_a\) by
\[
    \phi_a(I^\nu)
    =
    \begin{cases}
        1, & \deg I\in J \text{ and } a\nmid \deg I,\\
        0, & \text{otherwise}.
    \end{cases}
\]
Then,
\[
    \overline{\EE_{5,\alpha}}(p)
    \cap
    \EE_{1,\alpha,\kappa}(p)
    \cap
    \EE_{3,\alpha,t}(p)
    \subseteq
    \bigcup_{2\le a<n^{\kappa\alpha}}
    \KK_2^{-}(p,\phi_a,0,m).
\]

We apply Lemma~\ref{lem:typical_addfun} to each event on the right. For every
\(a\), the parameters are
\[
    N=\frac n4+\log n,
    \qquad
    M=\frac m{24}\le \frac{n}{24\log n},
    \qquad
    R\ll n^{O(1)},
    \qquad
    T\le m+2\ll n.
\]
It remains to estimate the total \(S\)-contribution.

By \eqref{eq:PPT}
\[
    L_{\phi_a}(m)
    =
    \sum_{\substack{u\in J\\ a\nmid u}}\frac1u+O(1).
\]
Uniformly for \(2\le a<n^{\kappa\alpha}\), we have
\[
\begin{aligned}
    \sum_{\substack{u\in J\\ a\nmid u}}\frac1u
    &=
    \sum_{n^{1-\alpha}\le u\le m}\frac1u
    -
    \sum_{\substack{n^{1-\alpha}\le u\le m\\ a\mid u}}\frac1u  \\
    &=
    \alpha\log n+O(\log\log n)
    -
    \frac1a
    \sum_{n^{1-\alpha}/a\le v\le m/a}\frac1v
    +O(1)  \\
    &\ge
    \left(1-\frac1a\right)\alpha\log n
    +O(\log\log n)
    \gg_\alpha \log n.
\end{aligned}
\]
In particular, since \(a\ge2\),
\[
    L_{\phi_a}(m)\ge \frac{\alpha}{3}\log n
\]
for all sufficiently large \(n\), uniformly in \(a\).

Lemma~\ref{lem:typical_addfun}, applied with lower-tail parameter \(0\), gives
\[
    S_{\KK_2^{-}(p,\phi_a,0,m)}
    \ll
    m n^{-C/4}
    +
    e^{-L_{\phi_a}(m)}
    +
    m^{-(3\log 3-3+1)}.
\]
Therefore, after choosing \(C\) sufficiently large,
\[
    S_{\KK_2^{-}(p,\phi_a,0,m)}
    \ll
    n^{-c'_5}
\]
for some \(c'_5=c'_5(\alpha)>0\), uniformly for
\(2\le a<n^{\kappa\alpha}\). Summing over \(a<n^{\kappa\alpha}\), and taking
\(\kappa>0\) sufficiently small in terms of \(\alpha\), gives
\[
    S_5
    \ll
    n^{\kappa\alpha} n^{-c'_5}
    \ll
    n^{-c_5}
\]
for some \(c_5=c_5(\alpha,\kappa)>0\).

Finally, Lemma~\ref{lem:union_controllable} gives the asserted
\((N_5,M_5,R_5,S_5,T_5)\)-controllability, with
\[
    N_5=\frac n4+\log n,
    \qquad
    M_5=\frac{n}{24\log n},
    \qquad
    R_5,T_5\ll n^{O(1)},
\]
as needed.
\end{proof}

We fix once and for all
\[
    \epsilon=\lambda=\frac18, \qquad \alpha=t=\frac12.
\]
Choose \(\delta>0\) sufficiently small so that Lemma~\ref{lem:E2-controllable}
applies, and \(\kappa>0\) sufficiently small so that Lemma~\ref{lem:E5-controllable} applies. With these choices all parameters are absolute.
Let $\EE = \EE_1\cap \cdots \cap \EE_{5}$.

\begin{proposition}\label{prop:partition-events}
Assume that $H_0$ is sufficiently large so that \eqref{eq:Delta} holds.  There exist absolute constants \(c>0\) and \(n_0=n_0(c,r)\ge1\) such that, for all
\(n\ge n_0\),
\[
    \Prob\bigl(\rho_p\notin\EE,\ \forall p\mid P\bigr)
    \ll_r n^{-cr}.
\]
\end{proposition}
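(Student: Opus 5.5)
Let $\AA_p$ denote the event $\rho_p\notin\EE$. The event in the statement is $\bigcap_{p\mid P}\AA_p$, and I will bound it by decomposing it into product events, one factor per prime.

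\emph{Step 1: reduce to a fixed choice of failing sub-event at each prime.} By Lemma~\ref{lem:SmallMult}, outside an event of probability $O(n^{-c\log n})$ we have $f_{n,p}\notin\MM_p$ for all $p\mid P$. This is negligible compared with $n^{-cr}$, so I may intersect everything with the complement of $\bigcup_p \MM_p$. For a single prime, $\rho_p\notin\EE$ means that at least one of the following holds:
\[
\overline{\EE_1}(p),\quad \overline{\EE_2}(p)\smallsetminus\MM_p,\quad \overline{\EE_3}(p),\quad \overline{\EE_4}(p),\quad \overline{\EE_5}(p)\cap\EE_1(p)\cap\EE_3(p).
\]
Call these $\BB_{1}(p),\dots,\BB_{5}(p)$. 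Then
\[
\bigcap_{p\mid P}\AA_p\smallsetminus\bigcup_p\MM_p\subseteq\bigcup_{\sigma\colon[r]\to[5]}\ \bigcap_{i=1}^r\BB_{\sigma(i)}(p_i).
\]
This is a union of $5^r$ events. The implied constant is allowed to depend on $r$, so it suffices to bound each intersection by $\ll_r n^{-cr}$. By Lemmas~\ref{lem:E1-controllable}--\ref{lem:E5-controllable}, with the fixed choices $\epsilon=\lambda=1/8$, $\alpha=t=1/2$ and small $\delta,\kappa$, each $\BB_j(p)$ is controllable by single-prime data. In each case $S_j\ll n^{-c}$ for an absolute $c>0$, and $R_j,T_j\ll n^{O(1)}$.

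\emph{Step 2: products over distinct primes are controllable.} Suppose each $\BB_{\sigma(i)}(p_i)$ is covered by single-prime data $(\calD^{(i)}_{a},\calI^{(i)}_{a})_{a\in A_i}$. Then the intersection $\bigcap_i\BB_{\sigma(i)}(p_i)$ is covered by the product datum indexed by $A=\prod_i A_i$. For $a=(a_1,\dots,a_r)$, the tuples are $\bfD=(D_1,\dots,D_r)$ with $D_i\in\calD^{(i)}_{a_i}$, and the sieve collections are $\calI_a=(\calI^{(1)}_{a_1},\dots,\calI^{(r)}_{a_r})$. This works because the congruence condition and the coprimality condition in $\Omega_{\bfD,\calI}$ are coordinatewise, so $\Omega_{\bfD,\calI_a}=\bigcap_i\Omega_{D_i,\calI^{(i)}_{a_i}}$.

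Both $\frak H$ and $\frak S$ are multiplicative across coordinates. Hence
\[
\sum_a\frak S(\calI_a)\frak H(\calD_a)=\prod_i S_{\sigma(i)}\ll_r n^{-cr}.
\]
The representation multiplicities and $|A|$ multiply as well, giving $R,T\ll n^{O(r)}$. The degree bounds $N$ and $M$ are the coordinatewise maxima, which are at most $n/2+\lambda\epsilon n^\lambda$ and $n/(24\log n)$ respectively.

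\emph{Step 3: apply the comparison lemma.} Lemma~\ref{lem:compare-uniform} then gives
\[
\Prob\Bigl(\bigcap_i\BB_{\sigma(i)}(p_i)\Bigr)\le 2^r n^{-cr}+n^{O(r)}\Delta_P(n;X).
\]
Provided $X\le n/2+n^{0.88}$, Lemma~\ref{lem:Delta_P_uniform_interval} makes the second term $n^{O(r)}e^{-n^{1/10}}$, which is negligible. Summing over the $5^r$ choices of $\sigma$ gives the proposition.

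\emph{Main obstacle.} I expect the main obstacle to be the degree bookkeeping $X\le n/2+n^{0.88}$. Taken literally, $N+6M\log M$ with $M=n/(24\log n)$ exceeds $n/2$ by roughly $n/4$. The fix I would try first is to track the degree bound separately for each piece of the covering, rather than using the global maxima. Pieces with $\deg D$ close to $n/2$ come from Lemma~\ref{lem:div_deg_k}, whose sieve level is only $m_k\le k^{\lambda}/(8\log n)$, so their contribution $6M_a\log M_a$ is $O(n^{\lambda})$. Pieces whose sieve level is of size $n/(24\log n)$ (from Lemma~\ref{lem:typical_addfun}) have $\deg D\le n/4+\log n$, so $n/4+\log n+n/4\le n/2+n^{0.88}$ up to lower-order terms. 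Concretely, I would re-run the sieve bound of Lemma~\ref{lem:compare-uniform} using $\max_a\bigl(N_a+6M_a\log M_a\bigr)$ in place of $N+6M\log M$, and check this inequality coordinatewise for the product data of Step 2.
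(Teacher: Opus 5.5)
Your proposal is correct and follows the paper's proof essentially step for step: discard $\bigcup_{p\mid P}\MM(p,\lceil\log n\rceil^3)$ via Lemma~\ref{lem:SmallMult}, cover the event by the $5^r$ intersections $\bigcap_i\BB_{j_i}(p_i)$, combine the single-prime covering data across primes (the paper asserts this without the explicit product construction you give), and finish with Lemma~\ref{lem:compare-uniform} and \eqref{eq:Delta}. The obstacle you flag is real: the paper simply asserts $N+6M\log M\le \frac n2+n^{0.88}$, which fails for the global values $N\approx \frac n2$, $M=\frac{n}{24\log n}$, since $6M\log M\sim \frac n4$. Your per-piece repair is the right one. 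The sieved pieces from Lemma~\ref{lem:typical_addfun} have $\deg D\le \frac n4$ and sieve level at most $\frac{n}{24\log n}$, while those from Lemma~\ref{lem:div_deg_k} have $\deg D\le \frac n2+O(n^{\lambda})$ but sieve level $m_k\le n^{\lambda}$, so every piece has modulus degree at most $\frac n2+O(n^{\lambda})$ in every coordinate.
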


\begin{proof}
Let
$\MM=\bigcup_{p\mid P}\MM(p,\lceil\log n\rceil^3)$.
By Lemma~\ref{lem:SmallMult},
\[
    \Prob(\MM)\ll n^{-c'\log n}\ll n^{-cr}.
\]
Hence we may assume that we are outside of $\MM$ for the rest of the proof. 

For \(p\mid P\), set
\[
\begin{aligned}
    \BB_1(p)&=\overline{\EE_{1,\alpha,\kappa}}(p),\\
    \BB_2(p)&=\overline{\EE_{2,\delta}}(p)\smallsetminus \MM(p,\lceil\log n\rceil^3),\\
    \BB_3(p)&=\overline{\EE_{3,\alpha,t}}(p),\\
    \BB_4(p)&=\overline{\EE_{4,t}}(p),\\
    \BB_5(p)&=\overline{\EE_{5,\alpha}}(p)
        \cap \EE_{1,\alpha,\kappa}(p)
        \cap \EE_{3,\alpha,t}(p).
\end{aligned}
\]
Then, outside \(\MM\), and with $\bfj=(j_1,\ldots, j_r)$, we have 
\[
    \{\rho_p\notin\EE,\ \forall p\mid P\}
    \subseteq
    \bigcup_{\bfj \in [5]^r}
    \bigcap_{i=1}^{r}\BB_{j_i}(p_i).
\]

Fix $\bfj\in [5]^r$. By Lemmas~\ref{lem:E1-controllable}--\ref{lem:E5-controllable}, each
\(\BB_{j_i}(p_i)\) is controllable with $N$-parameter $\leq \frac{n}{2}+n^{1/4}$, $M$-parameter $\leq \frac{n}{24 \log n}$, $R,T$-parameters $\ll n^{O(1)}$, and  \(S\)-parameter \(\ll n^{-c_j}\), for some
\(c_j>0\). Let
$c_0=\min_{1\le j\le5}c_j>0$.
Thus $\bigcap_{i=1}^{r}\BB_{j_i}(p_i)$
is controllable with the same $N$ and $M$-parameters and with  \(S\)-parameter \(\ll n^{-c_0r}\), while the corresponding
\(R\)- and \(T\)-parameters are \(n^{O(r)}\). Hence, $N+6M\log M\leq \frac{n}{2}+n^{0.88}$ for large $n$. Thus Lemma~\ref{lem:compare-uniform}
and \eqref{eq:Delta} give
\[
    \Prob\left(\bigcap_{i=1}^r\BB_{j_i}(p_i)\right)
    \ll
    n^{-c_0r}+n^{O(r)}e^{-n^{1/10}}
    \ll n^{-c_0r}.
\]
Summing over the \(5^r\) possible $\bfj$,  gives
\[
    \Prob(\rho_p\notin\EE,\ \forall p\mid P)
    \ll_r n^{-cr}.
\]
This proves the proposition.
\end{proof}

\subsubsection{Proof of Proposition~\ref{prop:lgg}}
Let $c$ be the constant from Proposition~\ref{prop:partition-events}, choose $r$ sufficiently large so that $cr>3/2$, and $H_0$ sufficiently large so that Lemma~\ref{lem:Delta_P_uniform_interval} holds. Let $n$ be sufficiently large. 

By Proposition~\ref{prop:partition-events}, with probability at least $1-Cn^{-cr}$, there exists $p\mid P$ such that \(\rho_p\in\EE\). By Corollary~\ref{cor:merging}, with probability at least $1-Cn^{-cr}$, there exists $\sigma\in G_{f_n}$ whose cycle structure is a $(\log n)^3$-merging of $\rho_p$. Hence, with probability $1-O(n^{-3/2})$, there exist $p\mid P$ and $\sigma\in G_{f_n}$ such that $\rho_\sigma$ is a $(\log n)^3$-merging of a partition in \(\EE\).

Then, by Lemmas~\ref{lem:bounded-cyclotomic} and \ref{lem:gn-irreducible}, there exists an irreducible factor $g_n$ of $f_n$ of degree $d_n=\deg g_n\geq n-8$ with probability at least $1-Cn^{-3/2}$. 

Restricting $\sigma$ to its action on the roots of $g_n$ yields an element
$\tau\in G_{g_n}$. Since $d_n\ge n-8$, the cycle structures
$\rho_\sigma$ and $\rho_\tau$ differ only by deleting parts of total size
at most $8$. In particular, for sufficiently large $n$, every partition
event defining \(\EE\) is preserved under this operation. Hence
$\rho_\tau$ is also a $(\log n)^3$-merging of a partition in \(\EE\).

By \cite[Lemma~12.8]{BarySoroker-Koukoulopoulos-Kozma2023},
every primitive subgroup of $S_{d_n}$ containing such an element $\tau$
is either $A_{d_n}$ or $S_{d_n}$. The proof in \emph{loc.\ cit.}\ applies
verbatim with $d_n$ in place of $n$: indeed, since $d_n\ge n-8$, the
inequality
\[
\frac{\sqrt n}{3}\le \frac{\sqrt{d_n}-1}{2}
\]
holds for all sufficiently large $n$, which is the only numerical input
required in the argument.

Since $g_n$ is irreducible, the group $G_{g_n}$ is transitive. Therefore,
if $G_{g_n}$ is primitive, we conclude that
\[
G_{g_n}\in\{A_{d_n},S_{d_n}\}.
\]

Finally, \cite[Lemma~12.9]{BarySoroker-Koukoulopoulos-Kozma2023}
shows that no imprimitive subgroup can contain an element whose cycle
structure is a $(\log n)^3$-merging of a partition in \(\EE\). Thus
$G_{g_n}$ cannot be imprimitive, and therefore
\[
G_{g_n}\in\{A_{d_n},S_{d_n}\}.
\]
This completes the proof.
\qed

\subsection{Disjointness}
We keep the notation of the previous sections. For each $n\geq 2$, let $d_n=\deg g_n$ and $L_n$ the splitting field of $g_n$ over $\Q$. We showed that with probability at least $1-Cn^{-3/2}$ we have 
\begin{equation}
    \label{eq:previous_sec}
    n-8\leq d_n\leq n \qquad \text{and}\qquad \Gal(L_n/\Q)=S_{d_n} \text{ or } A_{d_n}.
\end{equation}
\begin{proposition}\label{prop:disjointness}
    With probability at least $1-O(n^{-3/2})$ we have
    \[
        [L_n\cap L_m:\Q]\leq 2,
    \]
    for all $m> n\geq 15$.
\end{proposition}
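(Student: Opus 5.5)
The plan has two parts: group theory reduces the statement to the case $L_n=L_m$ with $m\le n+8$, and a probabilistic estimate at the auxiliary primes $p_1,\dots,p_r$ bounds the probability of that case. I expect the probabilistic estimate to be the hard part.

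\textbf{Step 1: group-theoretic reduction.} Work on the event \eqref{eq:previous_sec} for $n$, which fails with probability $O(n^{-3/2})$. Suppose $[L_n\cap L_m:\Q]>2$. Since $d_n\ge n-8\ge 7$, the intersection $L_n\cap L_m$ is Galois with group a quotient of $\Gal(L_n/\Q)\in\{A_{d_n},S_{d_n}\}$ of order $>2$. The only such quotient is the whole group, so $L_n\subseteq L_m$, exactly as in the proof of Lemma~\ref{lem:Laguerre-disjoin}.

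\textbf{Step 2: discarding large $m$.} Apply the same reasoning to $L_m$, on the event \eqref{eq:previous_sec} for $m$. Summing the exceptional probabilities over $m>n$ gives $\sum_{m>n}m^{-3/2}\ll n^{-1/2}$, which is not good enough. I would avoid this sum by arguing directly that a Galois field with group $A_d$ or $S_d$, $d\ge7$, embeds into $L_m$ only if $d\ge d_m'$, where $d_m'$ is the degree of the largest alternating composition factor of $\Gal(L_m/\Q)$. The point is that $\Gal(L_m/\Q)\le S_m$, and $A_{d_n}$ is then a quotient of a subgroup of $S_m$. This alone does not force $m\le n+8$, so I expect to actually bound $\sum_{m>n}\Prob(L_n\subseteq L_m)$ uniformly.

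\textbf{Step 3: transferring to the auxiliary primes.} Condition on $f_n$ and use the independence of $f_m$ from $f_n$. If $L_n\subseteq L_m$ and both groups are full, then $\Gal(L_m/\Q)$ surjects onto $\Gal(L_n/\Q)$. For unramified $p$, the Frobenius conjugacy class in $\Gal(L_n/\Q)$, which is read off from the cycle type $\rho_{g_n,p}$, is the image of the Frobenius at $p$ in $\Gal(L_m/\Q)$. In the principal case $d_m=d_n$ and $L_m=L_n$, which is the only case allowed when $\Gal(L_m/\Q)$ is full, this forces $\rho_{g_m,p}=\rho_{g_n,p}$ for every $p\mid P$ where both reductions are squarefree. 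The probability that $f_{m,p}$ has a prescribed factorization type under the uniform model is at most $\max_\lambda 1/z_\lambda\ll 1/m$. Using independence across $p_1,\dots,p_r$, together with the comparison Lemma~\ref{lem:compare-uniform} and \eqref{eq:Delta} (the event ``$f_{m,P}$ has a given tuple of squarefree factorization types'' is $(N,S,T)$-controllable after restricting the degrees of the small factors), I would get a bound $\ll m^{-cr}$ for each $m$.

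\textbf{Step 4: summation and the main obstacle.} For $r$ large, $\sum_{m>n}m^{-cr}\ll n^{-3/2}$. The main obstacle is that squarefreeness of $f_{m,p}$ and $f_{n,p}$ modulo a given $p$ holds only with probability bounded away from $1$, not tending to $1$. I would handle this by working with the $m^{1/10}$-rough part of the factorization, which is squarefree with high probability by Lemma~\ref{lem:SmallMult}-type bounds, and matching only the large parts of the cycle types. The probability that two independent random partitions share the multiset of parts exceeding $m^{1/10}$ is still $\ll m^{-c}$ per prime. The case where $\Gal(L_m/\Q)$ is not full has probability $\ll m^{-3/2}$ by Proposition~\ref{prop:lgg}. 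Summing over $m>n$ still fails for that contribution, so I would instead strengthen Proposition~\ref{prop:lgg} to $m^{-cr}$ with larger $r$, which its proof already gives.
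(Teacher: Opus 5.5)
Your Steps 1 and 3 are sound in substance. Once $\Gal(L_n/\Q)$ is full, $[L_n\cap L_m:\Q]>2$ forces $L_n\subseteq L_m$. If $\Gal(L_m/\Q)$ is full too, this forces $L_m=L_n$ and $d_m=d_n$. It also gives isomorphic permutation actions, since $d_n\ge 7$ and so all subgroups of index $d_n$ are conjugate. Hence Frobenius cycle types match, up to mergings, at every $p\mid P$; this is the mechanism the paper uses for the events $\BB_{n,m}$.

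The gap is Step 4, the indices $m>n$ for which $\Gal(L_m/\Q)$ is not full. You bound their contribution by the marginal probabilities $\Prob(\Gal(L_m/\Q)\notin\{A_{d_m},S_{d_m}\})$ and assert that the proof of Proposition~\ref{prop:lgg} already gives $m^{-cr}$. It does not, and no choice of $r$ can make it do so. The exponent $3/2$ there comes not from the partition events but from Lemmas~\ref{lem:bounded-cyclotomic} and~\ref{lem:gn-irreducible}, and reducibility of $g_m$ genuinely has probability $\gg m^{-2}$.

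For instance, $\Phi_5\mid f_m$ if and only if the five sums $\sum_{i\equiv j \bmod 5}a_i$, $0\le j\le 4$ (leading coefficient included), coincide. These are four lattice conditions on a sum of $\asymp m$ independent vectors in $\Z^4$ with nondegenerate covariance, so the local limit theorem gives probability $\asymp m^{-2}$. On this event $g_m$ keeps the factor $\Phi_5$, because $\Phi$ only strips cyclotomic factors of degree $\le 2$, so $g_m$ is reducible.

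Hence the best your split can give for the non-full contribution is $\sum_{m>n}m^{-2}\asymp n^{-1}$. This is neither $O(n^{-3/2})$ nor summable in $n$, so it cannot feed \eqref{eq:BC3}. To prove the uniform-in-$m$ statement along your lines you would need to bound the joint event $\{L_n\subseteq L_m,\ \Gal(L_m/\Q)\text{ not full}\}$ itself. For example, you could try to show that when $g_m$ acquires small cyclotomic factors, the remaining factor still has full group of degree close to $m$, so $A_{d_n}$ cannot be a quotient. The proposal contains nothing of this kind.

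For comparison, the paper sums only over $n<m\le n+8$. Lemma~\ref{lem:specificm} uses the group theory of your Step 1 to reduce to $d_m=d_n$ and $\Q[x]/g_n\cong\Q[x]/g_m$. Each of the eight events $\BB_{n,m}$ is then bounded by $n^{-3/2}$ using a single prime $p$ with $\rho_{f_{n,p}}\in\EE$. The events $\EE_1,\EE_3$ supply two multiplicity-one parts $k_1\neq k_2\in[n^{4/5},n/\log n]$. These must reappear, divided by some $\ell_i\le\ell$, as degrees of irreducible factors of $f_{m,p}$, which is an event of harmonic weight $\ll n^{-3/2}$ for the independent polynomial $f_m$.

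Note, however, that Lemma~\ref{lem:specificm} gets $m\le n+8$ by assuming that every $\Gal(L_m/\Q)$, $m>n$, is full (with $d_m\ge m-8$) outside an event of probability $O(n^{-3/2})$. That is exactly the union bound you rightly refused in Step 2. The $\Phi_5$ example shows it is false, since $\Phi_5\mid f_m$ for some $m>n$ with probability $\asymp n^{-1}$.

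If your goal is Theorem~\ref{thm:RandomMM}, the right repair is to restructure the Borel--Cantelli step rather than strengthen Proposition~\ref{prop:lgg}. By Lemma~\ref{lem:bounded-cyclotomic} and Proposition~\ref{prop:lgg}, almost surely every sufficiently large $m$ has $d_m\ge m-8$ and full $\Gal(L_m/\Q)$. On that event, for large $n<m$, a non-quadratic intersection forces $m\le n+8$ and $\BB_{n,m}$. Then $\sum_n\sum_{m=n+1}^{n+8}\Prob(\BB_{n,m})<\infty$ by the paper's estimate (or by your matching argument). For a single fixed pair $m>n$, by contrast, the marginal bounds at $n$ and at $m$ do suffice, so your argument proves that weaker reading of the statement.
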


The idea is that, once the Galois groups are large, a non-quadratic
intersection forces the splitting fields to coincide. We then show that
two independent random polynomials are very unlikely to have isomorphic
permutation representations, yielding the required summable bound.

\begin{lemma}\label{lem:specificm}
    With probability at least $1-Cn^{-3/2}$, if there exists $m>n$ such that $[L_n\cap L_{m}]>2$, then $m\leq n+8$, $d_n=d_m$, $\Q[x]/g_n\cong \Q[x]/g_m$, and  $L_{n} = L_m$. 
\end{lemma}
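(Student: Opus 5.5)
We condition on the event \eqref{eq:previous_sec} holding for $n$, which has probability $1-O(n^{-3/2})$. We then argue deterministically for every $m>n$ for which \eqref{eq:previous_sec} also holds at $m$. The remaining $m$, where $\Gal(L_m/\Q)$ is not large, need separate handling; see below.

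\textbf{Step 1: non-quadratic intersection forces $L_n\subseteq L_m$.} Since $n\ge 15$ and $d_n\ge n-8\ge 7$, the group $A_{d_n}$ is simple and non-abelian, and it is the unique nontrivial proper normal subgroup of $S_{d_n}$. The intersection $L_n\cap L_m$ is Galois over $\Q$, so it is the fixed field of a normal subgroup of $\Gal(L_n/\Q)\in\{A_{d_n},S_{d_n}\}$. The only normal subgroups are $1$, $A_{d_n}$ and $S_{d_n}$, so the intersection has degree $1$, $2$, or $[L_n:\Q]$. The hypothesis $[L_n\cap L_m:\Q]>2$ therefore gives $L_n\subseteq L_m$.

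\textbf{Step 2: comparing the two Galois groups.} We get a surjection $\Gal(L_m/\Q)\to\Gal(L_n/\Q)$. When $\Gal(L_m/\Q)\in\{A_{d_m},S_{d_m}\}$ with $d_m\ge 7$, every quotient of this group has order $1$, $2$, or $|\Gal(L_m/\Q)|$. Hence the surjection is an isomorphism, $L_n=L_m$, and $A_{d_n}\cong A_{d_m}$ by orders. This gives $d_n=d_m$, and so $m\le d_m+8=d_n+8\le n+8$.

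We also need $\Q[x]/g_n\cong\Q[x]/g_m$. For $d\ne 6$, every index-$d$ subgroup of $A_d$ or $S_d$ is a point stabilizer. So the stabilizer of a root of $g_m$ is conjugate, inside $\Gal(L_n/\Q)$, to the stabilizer of a root of $g_n$, and the two stem fields are conjugate subfields of $L_n$.

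\textbf{Step 3: exceptional $m$ and the main obstacle.} The step I expect to be hardest is dealing with $m$ for which $g_m$ is reducible or has a small Galois group; we must avoid summing failure probabilities over all $m>n$.

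The remedy is to use only the structure of $L_n$. Suppose $L_n\subseteq L_m$, and assume $g_m$ is irreducible of degree $d_m$ with transitive group $G$. Then $G$ surjects onto $A_{d_n}$ or $S_{d_n}$. The plan is to combine the bound $d_m\le m$ with a cycle-type argument: a section of $G$ isomorphic to $A_{d_n}$ forces $d_m\ge d_n$. I would first try to avoid this entirely. Since $L_n\subseteq L_m$ only constrains $m$ through $g_m$, it suffices to show that almost surely the exceptional set of $m$ is finite; Proposition~\ref{prop:lgg} together with Borel--Cantelli already gives this.

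So for the ``probability $1-Cn^{-3/2}$'' statement I would add the events \eqref{eq:previous_sec} for all $m>n$, using the tail sum $\sum_{m>n}m^{-3/2}\ll n^{-1/2}$. If a $3/2$-exponent is really required, I would strengthen Proposition~\ref{prop:lgg} by taking $r$ larger, so that the failure probability at $m$ is $O(m^{-5/2})$. The tail then sums to $O(n^{-3/2})$.
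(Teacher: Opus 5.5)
Your Steps 1 and 2 reproduce the paper's deterministic argument, and they are fine: a common quotient of order $>2$ of two groups of type $A_d/S_d$ forces $L_n=L_m$ and $d_n=d_m$, and index-$d$ subgroups are point stabilizers. The gap is in Step 3, i.e.\ in making $\Gal(L_m/\Q)\in\{A_{d_m},S_{d_m}\}$ hold simultaneously for all $m>n$. The paper disposes of this in one sentence, asserting that it holds with probability $1-O(n^{-3/2})$. You are right that a union bound over Proposition~\ref{prop:lgg} gives only $O(n^{-1/2})$.

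However, your remedy cannot work. You propose enlarging $r$ so that the failure probability at $m$ becomes $O(m^{-5/2})$. But the failure event contains $\{\Phi_5\mid f_m\}$, and likewise for $\Phi_8,\Phi_{10},\Phi_{12}$. Such a factor is coprime to $\Phi$, so it divides $g_m$; then $g_m$ is reducible and $\Gal(L_m/\Q)$ is intransitive. Writing $S_j=\sum_{i\equiv j\,(5)}a_i$, the condition $f_m(\zeta_5)=0$ reads $S_0=\cdots=S_4$. By the local CLT this has probability $\asymp m^{-2}$, whatever $r$ is: the auxiliary primes enter only the proof of the large-group property, not this event. Since the $f_m$ are independent, some $g_m$ with $m>n$ fails with probability $\gg\sum_{m>n}m^{-2}\asymp n^{-1}$. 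So the uniform event genuinely fails at the $n^{-3/2}$ scale, and the bound is not even summable in $n$. Your other suggestion, that almost surely only finitely many $m$ are bad, is a different statement. It gives no bound of the form $1-Cn^{-3/2}$ at a fixed $n$.

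The missing idea is to stop requiring every $m>n$ to be good. One clean repair is at the level of Proposition~\ref{prop:BC}. Call an index good if \eqref{eq:previous_sec} holds; by \eqref{eq:BC1}--\eqref{eq:BC2}, almost surely only finitely many indices are bad. Pairs $(\alpha,\beta)$ with $\alpha$ a root of one of these finitely many $f_{n_0}$ (or of $\Phi$) and $\beta$ a root of a good $g_m$ are harmless. Indeed, $L_\alpha\cap L_m$ is Galois with group a quotient of $\Gal(L_m/\Q)$, so it has degree $\le 2$ unless $L_m\subseteq L_\alpha$, which fails once $d_m!/2>[L_\alpha:\Q]$.

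So you only need, for each $n$, the probability that $n$ and some $m>n$ are both good with $[L_n\cap L_m:\Q]>2$. Your Step 2 forces $d_m=d_n$, hence $m\le n+8$. That leaves eight values of $m$, each covered by the bound $\Prob(\BB_{n,m})\ll n^{-3/2}$ from the proof of Proposition~\ref{prop:disjointness}. If you want the lemma literally as stated, you must instead bound $\Prob(L_n\subseteq L_m)$ for bad $m$ directly, not the probability that $m$ is bad. For instance, $L_n\subseteq L_m$ forces $A_{d_n}$ to be a section of the Galois group of an irreducible factor of $g_m$ of degree $\ge d_n$, so a stray $\Phi_5$ factor alone does not produce the bad event.
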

\begin{proof}
    With probability \(1-O(n^{-3/2})\), we have
    \[
        \Gal(L_n/\Q)\in\{A_{d_n},S_{d_n}\}
    \]
    and, for every \(m>n\), the same holds for \(\Gal(L_m/\Q)\).
    We may assume that \(n\ge 13\), so that \(d_n\ge 5\).

    The rest of the proof is deterministic. Suppose that \([L_n\cap L_m:\Q]>2\). Since \(L_n\cap L_m\) is
    Galois over \(\Q\), the group
    \[
        \Gal(L_n\cap L_m/\Q)
    \]
    is a common quotient of \(\Gal(L_n/\Q)\) and \(\Gal(L_m/\Q)\).
    For \(d\ge 5\), the only proper nontrivial quotient of \(S_d\) is
    the quadratic quotient, while \(A_d\) has no nontrivial proper
    quotient. Therefore a common quotient of degree \(>2\) forces the
    full alternating/symmetric quotient to occur on both sides. Hence
    \(d_n=d_m\) and
    \[
        L_n\cap L_m=L_n=L_m .
    \]

    Finally, \(A_d\) and \(S_d\) each have a unique conjugacy class of
    subgroups of index \(d\). Since \(g_n\) and \(g_m\) are irreducible
    of the same degree \(d=d_n=d_m\) and have the same splitting field,
    the corresponding degree-\(d\) subfields are conjugate. Thus
    \[
        \Q[x]/g_n\cong \Q[x]/g_m .
    \]
\end{proof}

\begin{proof}[Proof of Proposition~\ref{prop:disjointness}]
    For $m>n$ let $\BB_{n,m}$ be the event that \(d_n=d_m, \ \Q[x]/g_n \cong \Q[x]/g_m\). For $n$ sufficiently large, by Lemma~\ref{lem:specificm}
    \[
        \Prob\left(\exists m>n \ [L_n:L_m]>2 \right)\leq \sum_{m=n+1}^{n+8} \Prob\left(\BB_{n,m}\right) + O(n^{-3/2})
    \]
    Hence, it suffices to show that for a fixed $m\in [n+1,n+8]$, the event $\BB_{n,m}$ occurs with probability $\ll n^{-3/2}$. 
    
    On $\BB_{n,m}$, the actions of $G=\Gal(L_n/\Q)=\Gal(L_m/\Q)$ on the set of roots of $g_n$ and on the set of roots of $g_m$ are isomorphic. Hence, the cycle structure of $\sigma\in G$ with respect to the action of any of these polynomials is the same, and we shall denote it by $\rho_{\sigma}$.
    
    Put \(\ell=\lceil\log n\rceil^3\). Outside the event
    \[
        \MM:=\MM_P(n,\ell)\cup \MM_P(m,\ell),
    \]
    both $g_{n,p}$ and $g_{m,p}$ are $\ell$-free. Thus, letting $\sigma\in G$ be a lifting of the Frobenius modulo $p$, the proof of  \cite[Lemma~11.3]{BarySoroker-Koukoulopoulos-Kozma2023} gives that 
    \[
        \rho_{\sigma}
    \]
    is an $\ell$-merging of $\rho_{g_{n,p}}$ and of $\rho_{g_{m,p}}$.
    
    Consider \(\EE(P)\), the event that \(f_{n,p}\in \EE\) (with
    \(\alpha=1/5\)) for some \(p\mid P\). Then, by
    Lemma~\ref{lem:SmallMult} and
    Proposition~\ref{prop:partition-events},
    \[
        \Prob(\BB_{n,m})
        \leq
        \Prob(\BB(n,m)\cap \EE(P)\smallsetminus \MM)
        +O(n^{-3/2}).
    \]
    Hence it suffices to prove that
    \[
        \Prob(\BB')\ll n^{-3/2},
        \qquad
        \BB'=\BB(n,m)\cap \EE(P)\smallsetminus \MM .
    \]

    On \(\EE(P)\), there exists \(p\mid P\) such that
    \(\rho_{f_{n,p}}\in\EE\). Since \(\alpha=1/5\), the event
    \(\EE_3\) yields distinct parts
    \[
        k_1,k_2\in [n^{4/5},n/\log n]
    \]
    of \(\rho_{f_{n,p}}\). Moreover, since
    \(\rho_{f_{n,p}}\in\EE_1\), each of \(k_1\) and \(k_2\) occurs
    with multiplicity one. Since \(\rho_{g_{n,p}}\) is obtained from
    \(\rho_{f_{n,p}}\) by deleting parts of total size at most \(8\),
    we still have
    \[
        \{k_1,k_2\}\subseteq \rho_{g_{n,p}}
    \]
    for \(n\) sufficiently large.

    Since $\rho$ is an $\ell$-merging of $\rho_{g_{n,p}}$ and $\ell<n^{4/5}\leq k_1,k_2$ for large $n$, we conclude that $\{k_1,k_2\}\subseteq \rho$. Since $\rho$ is an $\ell$-merging of $\rho_{g_{m,p}}$, there exists $1\leq \ell_i\leq \ell$ and $k_i'$ such that $k_i=\ell_i k_i'$ and $\{k_1',k_2'\}\subseteq \rho_{g_{m,p}}\subseteq \rho_{f_{m,p}}$. Thus, on $\BB'$, there exist irreducible polynomials $I_1,I_2$ 
    \begin{equation}
    \label{eq:degrees}        
        I_1I_2\mid f_{m,p}, \qquad \ell_i\deg I_i=k_i,
        \qquad i=1,2.
    \end{equation}
    
    Thus, conditioning $f_{n}$ as above, taking
    \(
    \{k_1,k_2\}\subseteq \rho_{f_{n,p}}
    \)
    with \(k_1,k_2\) as above, and using that \(f_n\) and \(f_m\) are sampled
    independently, the event \(\BB'\) becomes an event depending only on
    \(f_m\). 
    Hence, viewed as an event on \(f_m\), it is covered by
    \[
        \calD
        =
        \bigcup_{\substack{\ell_1\mid k_1, \\ \ell_1\leq \ell}}
        \bigcup_{\substack{\ell_2\mid k_2, \\ \ell_2\leq \ell}}
        \{
            I_1I_2 :
            I_1,I_2 \text{ irreducible},
            \ \ell_i\deg I_i=k_i,\ i=1,2
        \}.
    \]
    Thus, if  $I_1I_2\in\calD$, there exists $\ell_i\mid k_i$ with $\ell_i\leq \ell$. Then,
    \[
        k_i/\ell_{i}\in \left[\frac{n^{4/5}}{\ell}, \frac{ n}{\log n}\right] \subseteq [\ell n^{3/4}, n/4],
    \]
    for $n$ sufficiently large. By \eqref{eq:PPT}, for $k\in [\ell n^{3/4}, n/4]$ we have $\sum_{\deg I=k}\|I\|^{-1}\leq \frac{1}{k}\leq \ell^{-1}n^{-3/2}$. Thus,
    \[
        \frak{H}(\calD) = \sum_{\substack{\ell_1\mid k_1, \\ \ell_1\leq \ell}}
        \sum_{\substack{\ell_2\mid k_2, \\ \ell_2\leq \ell}} \sum_{\substack{\deg I_i=k_i/\ell_i, \\ i=1,2}} \|I_1\|^{-1}\|I_2\|^{-1} 
        \leq 
        \sum_{\substack{\ell_1\mid k_1, \\ \ell_1\leq \ell}}
        \sum_{\substack{\ell_2\mid k_2, \\ \ell_2\leq \ell}} \frak{S}_{k_1/\ell_1}\frak{S}_{k_2/\ell_2}
        \leq \ell^{2}\ell^{-2}n^{-3/2}\leq    n^{-3/2}.
    \]
    So $\BB'$ conditioned on $\{\{k_1,k_2\}\subseteq \rho_{f_{n,p}}\}$ is $(n/2,n^{-3/2},1)$-controllable event on $f_m$. By  Lemma~\ref{lem:compare-uniform} and \eqref{eq:Delta},
    \[
    \Prob(\BB' | \{k_1,k_2\}\subseteq \rho_{f_{n,p}})\ll n^{-3/2}.
    \]
    Finally, applying law of total probability, we conclude that 
    \begin{multline}
        \Prob(\BB')  
        = \sum_{\substack{k_1, k_2\in [n^{4/5},n/\log n]\\ k_1\neq k_2}} \Prob(\BB' | \{k_1,k_2\}\subseteq  \rho_{f_{n,p}}) \Prob(\{k_1,k_2\}) \subseteq \rho_{f_{n,p}}) 
        \\
        \ll n^{-3/2} \sum_{\substack{k_1, k_2\in [n^{4/5},n/\log n]\\ k_1< k_2}}\Prob(\{k_1,k_2\}) \subseteq \rho_{f_{n,p}})\leq n^{-3/2},
    \end{multline}
    as needed to complete the proof. 
\end{proof}

\subsection{Proof of Theorem~\ref{thm:RandomMM}}
    Since $\sum_{n=1}^{\infty} n^{-3/2} <\infty$, the conditions (1)--(3) of  Proposition~\ref{prop:BC} hold (with $C=8$) by Lemma~\ref{lem:bounded-cyclotomic},  Proposition~\ref{prop:lgg}, and Proposition~\ref{prop:disjointness}. Hence, 
    $S$ satisfies \MMgeneric{}, and the proof is done by Theorem~\ref{thm:GMM-intro}.
\qed

\bibliographystyle{alpha}
\bibliography{bibliography}
\end{document}